\newif\ifarxiv
\arxivtrue
% \arxivfalse
\documentclass{article}

\ifarxiv
\usepackage[accepted]{icml2018_arxiv}
\else
\usepackage{icml2018}
\fi

\usepackage{natbib}
%THINGS THAT I ADDED
\usepackage{color, xcolor}
\usepackage{times,graphicx,amssymb,amsmath,natbib}
\definecolor{mydarkblue}{rgb}{0,0.08,0.45}
\definecolor{mydarkblue}{rgb}{0,0.08,0.45}
\usepackage[colorlinks=true,
    linkcolor=mydarkblue,
    citecolor=mydarkblue,
    filecolor=mydarkblue,
    urlcolor=mydarkblue]{hyperref}

\usepackage{dsfont}

\usepackage{amsthm}
\usepackage{mathtools}
% Colors
\definecolor{ddarkbrown}{rgb}{0.5,0.2,0.05} \definecolor{bbluegray}{rgb}{0.05,0,0.5}

\definecolor{myblue}{HTML}{D2E4FC}
\newcommand*\mybluebox[1]{%
\colorbox{myblue}{\hspace{1em}#1\hspace{1em}}}

% Numbering (by section, then sequential)
\newtheorem{theorem}{Theorem}[section]

\newtheorem{lemma}{Lemma}

\newenvironment{sketchproof}{\textbf{Proof sketch.}}{\QED\bigskip}
\renewenvironment{proof}{\textbf{Proof.}}{\QED\bigskip}

% some traditional defintions that can be blamed on craig barratt
\newcommand{\BEAS}{\begin{eqnarray*}}
\newcommand{\EEAS}{\end{eqnarray*}}
\newcommand{\BEA}{\begin{eqnarray}}
\newcommand{\EEA}{\end{eqnarray}}
\newcommand{\BEQ}{\begin{equation}}
\newcommand{\EEQ}{\end{equation}}
\newcommand{\BIT}{\begin{itemize}}
\newcommand{\EIT}{\end{itemize}}
\newcommand{\BNUM}{\begin{enumerate}}
\newcommand{\ENUM}{\end{enumerate}}

% arash added these two
\newcommand{\BA}{\begin{array}}
\newcommand{\EA}{\end{array}}

% Refs, etc

% text abbrevs

% std math stuff

  % symmetric matrices

% lin alg stuff

%XXX dumb question: do we not use \diag(...) to form diagonal or
%block diagonal matrices?

% probability stuff

% convexity & optimization stuff

\newcommand{\QED}{~~\rule[-1pt]{6pt}{6pt}}

%\newcommand{\hypo}{\mathop{\bf hypo}}

%the following ones seem way too specialized to be here...
%let's omve them to a new file called defs2.tex or something like that

% defs for cones & generalized inequalities

% From Alex

% from fabian
\DeclareMathOperator*{\argmin}{arg\,min}
\DeclareMathOperator*{\argmax}{arg\,max}
\DeclareMathOperator*{\minimize}{minimize}

\DeclareMathOperator*{\conv}{conv}
\DeclareMathOperator*{\LMO}{LMO}
\def\xx{{\boldsymbol x}}
\def\bb{{\boldsymbol b}}

\def\dd{{\boldsymbol d}}
\def\ww{{\boldsymbol w}}
\def\yy{{\boldsymbol y}}
\def\vv{{\boldsymbol v}}
\def\rr{{\boldsymbol r}}
\def\ss{{\boldsymbol s}}
\def\zz{{\boldsymbol z}}

\def\defas{\stackrel{\text{def}}{=}}

\def\MM{{\mathcal M}}

\newcommand{\Econd}{\mathbf{E}}

%from thomas
\newcommand{\floor}[1]{\lfloor #1 \rfloor}

% use Times
\usepackage{times}
% For figures
\usepackage{graphicx} % more modern
\usepackage{subfigure} 

%for itermize without margins
\usepackage{enumitem}

% For citations
\usepackage{natbib}

% For algorithms
% \usepackage{algorithm,algpseudocode}
\usepackage[linesnumbered, ruled,vlined]{algorithm2e}
\SetKwBlock{Variantone}{Variant 1}{}
\SetKwBlock{Varianttwo}{Variant 2}{}

% As of 2011, we use the hyperref package to produce hyperlinks in the
% resulting PDF.  If this breaks your system, please commend out the
% following usepackage line and replace \usepackage{icml2017} with
% \usepackage[nohyperref]{icml2017} above.
\usepackage{hyperref}

% Packages hyperref and algorithmic misbehave sometimes.  We can fix
% this with the following command.
%ATTENTION J AI RETIRE CA!!
%\newcommand{\theHalgorithm}{\arabic{algorithm}}

\usepackage{empheq}

% Employ this version of the ``usepackage'' statement after the paper has
% been accepted, when creating the final version.  This will set the
% note in the first column to ``Proceedings of the...''
%\usepackage[accepted]{icml2017}

% The \icmltitle you define below is probably too long as a header.
% Therefore, a short form for the running title is supplied here:
\icmltitlerunning{Frank-Wolfe with Subsampling Oracle}

\makeatletter
\newcommand{\neutralize}[1]{\expandafter\let\csname c@#1\endcsname\count@}
\makeatother

\newtheorem{thm}{Theorem}

\newenvironment{thmbis}[1]
  {%
   \neutralize{thm}\phantomsection
   \begin{thm}}
  {\end{thm}}
  
\newtheorem{lem}{lemma}

\begin{document} 

%DOUBLE COLUMN PAGE UNCOMMENT
%\twocolumn[
\onecolumn{

\icmltitle{Frank-Wolfe with Subsampling Oracle}
% 
% \icmltitle{An Efficient Algorithm for Interaction Selection}
% \icmltitle{Frank-Wolfe converges over subsampled domains}

% It is OKAY to include author information, even for blind
% submissions: the style file will automatically remove it for you
% unless you've provided the [accepted] option to the icml2017
% package.

% list of affiliations. the first argument should be a (short)
% identifier you will use later to specify author affiliations
% Academic affiliations should list Department, University, City, Region, Country
% Industry affiliations should list Company, City, Region, Country

% you can specify symbols, otherwise they are numbered in order
% ideally, you should not use this facility. affiliations will be numbered
% in order of appearance and this is the preferred way.
\icmlsetsymbol{equal}{*}

\begin{icmlauthorlist}
\icmlauthor{Thomas Kerdreux}{equal,ENS}
\icmlauthor{Fabian Pedregosa}{equal,BERK,ETHZ}
\icmlauthor{Alexandre d'Aspremont}{ENS,CNRS}
\end{icmlauthorlist}

\icmlaffiliation{ENS}{D.I., UMR 8548,
 \'Ecole Normale Sup\'erieure, Paris, France.}
\icmlaffiliation{CNRS}{CNRS, France.}
\icmlaffiliation{BERK}{UC Berkeley, USA.}
\icmlaffiliation{ETHZ}{ETH Zurich, Switzerland.}

\icmlcorrespondingauthor{Thomas Kerdreux}{thomas.kerdreux@inria.fr}
\icmlcorrespondingauthor{Fabian Pedregosa}{f@bianp.net}

% You may provide any keywords that you 
% find helpful for describing your paper; these are used to populate 
% the "keywords" metadata in the PDF but will not be shown in the document
\icmlkeywords{optimization, subsampling, Frank-Wolfe, factorization machines}

\vskip 0.3in

% UNCOMMENT TO HAVE DOUBLE COLUMNS PAPER
%]
}

% this must go after the closing bracket ] following \twocolumn[ ...

% This command actually creates the footnote in the first column
% listing the affiliations and the copyright notice.
% The command takes one argument, which is text to display at the start of the footnote.
% The \icmlEqualContribution command is standard text for equal contribution.
% Remove it (just {}) if you do not need this facility.

%\printAffiliationsAndNotice{}  % leave blank if no need to mention equal contribution
\printAffiliationsAndNotice{\icmlEqualContribution} % otherwise use the standard text.

\begin{abstract}
We analyze two novel randomized variants of the Frank-Wolfe (FW) or conditional gradient algorithm.
While classical FW algorithms require solving a linear minimization problem over the domain at each iteration, the proposed method only requires to solve a linear minimization problem over a small \emph{subset} of the original domain.
The first algorithm that we propose is a randomized variant of the original FW algorithm and achieves a $\mathcal{O}(1/t)$ sublinear convergence rate as in the deterministic counterpart. The second algorithm is a randomized variant of the Away-step FW algorithm, and again as its deterministic counterpart, reaches linear (i.e., exponential) convergence rate making it the first provably convergent randomized variant of Away-step FW. In both cases, while subsampling reduces the convergence rate by a constant factor, the linear minimization step can be a fraction of the cost of that of the deterministic versions, especially when the data is streamed. We illustrate computational gains of the algorithms on regression problems, involving both $\ell_1$ and latent group lasso penalties.
\end{abstract}

\section{Introduction}\label{s:intro}

The Frank-Wolfe (FW) or conditional gradient algorithm~\citep{frank1956algorithm,jaggi2013revisiting} is designed to solve optimization problems of the form
\begin{empheq}[box=\mybluebox]{equation}\tag{OPT}\label{eq:obj_fun}
  \minimize_{\xx \in \MM}\,\vphantom{\sum_i^n} f(\xx) ~, \text{ with $\MM = \conv(\mathcal{A})$}~,
\end{empheq}
where $\mathcal{A}$ is a (possibly infinite) set of vectors which we call \emph{atoms}, and $\conv(\mathcal{A})$ is its convex hull. The FW algorithm and variants have seen an impressive revival in recent years, due to their low memory requirements and projection-free iterations, which make them particularly appropriate to solve large scale convex problems, for instance convex relaxations of problems written over combinatorial polytopes~\citep{zaslavskiy2009path,joulin2014efficient,vogelstein2015fast}.

The Frank-Wolfe algorithm is projection-free, i.e. unlike most methods to solve \eqref{eq:obj_fun}, it does not require to compute a projection onto the feasible set $\mathcal{M}$. Instead, it relies on a linear minimization oracle over a set $\mathcal{A}$, written $\text{LMO}(\cdot, \mathcal{A})$, which solves the following linear problem
\begin{equation}
\text{LMO}(\rr, \mathcal{A}) \in \argmin_{\vv \in \mathcal{A}}\langle \vv, \rr\rangle~.
\end{equation}
For some constraint sets, such as the nuclear or latent group norm ball \cite{jaggi2010simple,vinyes2017fast}, computing the LMO can be orders of magnitude faster than projecting. 
Another feature of FW that has greatly contributed to its practical success is its low memory requirements. The algorithm maintains its iterates as a convex combination of a few atoms, enabling the resulting sparse and low rank iterates to be stored efficiently. This feature allows the FW algorithm to be used in situations with a huge or even infinite number of features, such as architecture optimization in neural networks~\citep{ping2016learning} or estimation of an infinite-dimensional sparse matrix arising in multi-output polynomial networks~\citep{NIPS2017_6927}.
% Recently, this feature has allowed the FW algorithm to optimize on the architecture of neural networks \citep{ping2016learning}.

% FW algorithm was studied by \cite{garber2016linear}. It was also used in \cite{yurtsever2017sketchy} with the Trace Norm polytope and finally appeared as a way to optimize on the architecture of neural networks \citep{ping2016learning,scardapane2017group}. In such situations, the optimization is done over the node as well as the link between them which results in computational overloads.

Despite these attractive properties, for problems with a large number of variables or with a very large atomic set (or both), computing the full gradient and LMO at each iteration can become prohibitive. Designing variants of the FW algorithm which alleviate this computational burden would have a significant practical impact on performance.

One recent direction to achieve this is to replace the LMO with a \emph{randomized} linear oracle in which the linear minimization is performed only over a random sample of the original atomic domain. This approach has proven to be highly successful on specific problems such as structured SVMs~\citep{lacoste2012block} and $\ell_1$-constrained regression \citep{Frandi2016}, however little is known in the general case. Is it possible to design a FW variant with a randomized oracle that achieves the same convergence rate (up to a constant factor) as the non-randomized variant? Can this be extended to linearly-convergent FW algorithms~\citep{lacoste2013affine,lacoste2015global,garber2015faster}? In this paper we give a positive answer to both questions and explore the trade-offs between subsampling and convergence rate.
% Most of the time it \emph{is} possible to get away with subsampling.

%Our contribution
\paragraph{Outline and main contribution.} 
The main contribution of this paper is to develop and analyze two algorithms that share the low memory requirements and projection-free iterations of FW, but in which the LMO is computed only over a random subset of the original domain. 
In many cases, this results in large computational gains in computing the LMO which can also speed up the overall FW algorithm. In practice, the algorithm will run a larger number of cheaper iterations, which is typically more efficient for very large data sets (e.g. in a streaming model where the data does not fit in core memory and can only be accessed by chunks).
% We highlight technical issues in previous work and provide theoretical convergence guarantee for both algorithms, showing that they reach the same asymptotic convergence rate as their deterministic counter-part.
The paper is structured as follows
\begin{itemize}[leftmargin=*]
\item \S\ref{s:fw} describes the Randomized FW algorithm, proving a sublinear convergence rate.
\item \S\ref{s:rafw} describes ``Randomized Away FW'', a variant of the above algorithm with linear convergence rate on polytopes. To the best of our knowledge this is the first provably convergent randomized version of the Away-steps FW algorithm.
\item Finally, in \S\ref{s:Numerical_Results} we discuss implementation aspects of the proposed algorithms and study their performance on  lasso and latent group lasso problems.
\end{itemize}

Note that with the proven sub-linear rate of convergence for Randomized FW, the cost of the LMO is reduced by the subsampling rate, but this is compensated by the fact that the number of iteration required by RFW to reach same convergence guarantee as FW is itself multiplied by the sampling rate. Similarly the linear convergence rate in Randomized AFW does not theoretically show a computational advantage since the number of iterations is multiplied by the squared sampling rate, in our highly conservative bounds at least. Nevertheless, our numerical experiments show that randomized versions are often numerically superior to their deterministic counterparts.

\subsection{Related work} 

%We briefly discuss different approaches that have been proposed with the goal of alleviating the cost of the linear minimization oracle.
Several references have focused on reducing the cost of computing the linear oracle. The analysis of \citep{jaggi2013revisiting}
allows for an error term in the LMO, and so a randomized linear oracle could in principle be analyzed under this framework. However, this is not fully satisfactory as it requires the approximation error to decrease towards zero as the algorithm progresses. In our algorithm, the subsampling approximation error doesn't need to decrease.

\citet{lacoste2012block} studied a randomized FW variant named block-coordinate FW in which at each step the LMO is computed only over a subset (block) of variables. In this case, the approximation error need not decrease to zero, but the method can only be applied to a restricted class of problems: those with block-separable domain, leaving out important cases such as $\ell_1$--constrained minimization.
%This approach was recently revisited by \citet{osokin2016minding} to consider different improvements such as away-step FW.
Because of the block separability, a more aggressive step-size strategy can be used in this case, resulting overall in a different algorithm.
%{\blue TK: I suggest that we say something like  block coordinate in other framework (proximal/projected gradient descent) are very successful but that can be adapted in FW scheme's only when the polytope are separable. Here we propose to extend further that properties and that it directly induce a block coordinate LMO (because that is the flaw in our argument: subsampling does not generally means to do block coordinate descent). }

Finally, \citet{frandi2014complexity} proposed a FW variant which can be seen as a special case of our Algorithm~\ref{algo:Randomized_FW_general} for the Lasso problem, analyzed in \cite{Frandi2016}. Our analysis here brings three key improvements on this last result. First, it is provably convergent for arbitrary atomic domains,  not just the $\ell_1$ ball (furthermore the proof in \cite{Frandi2016} has technical issues discussed in \ref{s:appendix_incomplete_frandi}). Second, it allows a choice of step size that does not require exact line-search (Variant 2), which is typically only feasible for quadratic loss functions. Third, we extend our analysis to linearly-convergent FW variants such as the Away-step FW.

% In this work we extend this algorithm to arbitrary domains (Algorithm
% For one regularization parameter {\blue (I am trying to say that they compared the result when solving the lasso with just one parameter $\lambda$ which is somehow unfair...)}, experimental analysis exhibited state of the art performance. Though we found their theoretical analysis incomplete (see ).

A different technique to alleviate the cost of the linear oracle was recently proposed by \citet{braun2017lazifying}. In that work, the authors propose a FW variant that replaces the LMO by a ``weak'' separation oracle and showed significant speedups in wall-clock performance on problems such as the video co-localization. This approach was combined with gradient sliding in \cite{lan2017conditional}, a technique \citep{lan2016conditional} that allows to skip the computation of gradients from time to time. 
However, for problems such as Lasso or latent group lasso, a randomized LMO avoids all full gradient computations, while the lazy weak separation oracle still requires it. Combining these various techniques is an interesting open question.
% Both techniques can be seen as orthogonal to our method, and are beneficial in different circumstances. For instance, in the case of the 

%Why go beyond Frandi
% The classic FW algorithm does not give the best ratio between the size of the support and the number of iteration. Accentuated by the fact that sub-sampling the polytope results in less relevant choice of atom (for the sake of computation), we propose a randomized version of the AFW (RAFW), more adapted to ensure low cost memory.

Proximal coordinate-descent methods~\citep{richtarik2014iteration} (not based on FW) have also been used to solve problems with a huge number of variables. They are particularly effective when combined with variable screening rules such as~\citep{strongrules,fercoq2015mind}. However, for constrained problems they require evaluating a projection operator, which on some sets such as the latent group lasso ball can be much more expensive than the LMO. Furthermore, these methods require that the projection operator is block-separable, while our method does not.

\paragraph{Notation.}
We denote vectors with boldface lower case letters (i.e., $\xx$), and sets in calligraphic letter (i.e., $\mathcal{A}$).
We denote $\text{clip}_{[0,1]}(s)=\max\{0,\min\{1,s\}\}$. Probability is denoted $\mathcal{P}$. The cardinality of a set $\mathcal{A}$ is denoted $|\mathcal{A}|$. For $\xx^*$ a solution of \eqref{eq:obj_fun}, we denote $h(\xx) = f(\xx) -f(\xx^*)$. %For an atomic set $\mathcal{A}$ and a particular convex combination $\xx = \sum_{\vv\in\mathcal{A}}{\alpha_{\vv}\vv}$, we consider its support  $\mathcal{S}(\xx)=\{\vv\in\mathcal{A}\text{ s.t. }\alpha_{\vv}>0\}$. For an iterate $\xx_t$ of a FW type algorithm, we name $\mathcal{S}(\xx_t)$ by $\mathcal{S}_t$.

\emph{Randomized vs stochastic.} We denote FW variants with randomness in the LMO \emph{randomized} and reserve the name \emph{stochastic} for FW variants that replace the gradient with a stochastic approximation, as in \citep{hazan2016variance}.

\section{Randomized Frank-Wolfe}\label{s:fw}

In this section we present our first contribution, a FW variant that we name Randomized Frank-Wolfe (RFW). The method is detailed in Algorithm \ref{algo:Randomized_FW_general}.
Compared to the standard FW algorithm, it has the following two distinct features.

First,
 the LMO is computed over a random subset $\mathcal{A}_t \subseteq \mathcal{A}$ of the original atomic set in which each atom is equally likely to appear, i.e., in which  $\mathcal{P}(\vv \in \mathcal{A}_t)\!=\!\eta$ for all ${\vv \in \mathcal{A}}$ (Line~\ref{line:subsample}). For discrete sets this can be implemented simply by drawing uniformly at random a fixed number of elements at each iteration. The sampling parameter $\eta$ controls the fraction of the domain that is considered by the LMO at each iteration. If $\eta = 1$, the LMO considers the full domain at each iteration and the algorithm defaults to the classical FW algorithm. However, for $\eta < 1$, the LMO only needs to consider a fraction of the atoms in the original dataset and can be faster than the FW LMO.

Second, because of this subsampling we can no longer guarantee that the atom chosen by the LMO is a descent direction and so it is no longer possible to use the ``oblivious'' (i.e., independent on the result of the LMO) $2/(2+t)$ step-size commonly  used in the FW algorithm.
We provide two possible choices for this step-size: the first variant (Line~\ref{alg:var0_1}) chooses the step-size by exact line search and requires to solve a 1-dimensional convex optimization problem. This approach is efficient when this sub-problem has a closed form solution, as it happens for example in the case of quadratic loss functions. The second variant does not need to solve this sub-problem, but in exchange requires to have an estimate of the curvature constant $C_f$ (defined in next subsection). Note that in absence of an estimate of this quantity, one can use the bound $C_f \leq \text{diam}(\mathcal{M})^2 L$, where $L$ is the Lipschitz constant of $\nabla f$ and $\text{diam}(\mathcal{M})$ is the diameter of the domain in euclidean norm.

\paragraph{Gradient coordinate subsampling.}
We note that the gradient of $f$ only enters Algorithm~\ref{algo:Randomized_FW_general} through the computation of the randomized LMO, and so only the dot product between the gradient and the subsampled atomic set are truly necessary.
In some cases the elements of the atomic set have a specific structure that makes computing dot products particularly effective. For example, when the atomic elements are sparse, only the coordinates of the gradient that are in the support of the atomic set need to be evaluated.
As a result, for
sparse atomic sets such as the $\ell_1$ ball, the group lasso ball (also known as $\ell_1/\ell_2$ ball), or even the latent group lasso~\citep{obozinski2011group}  ball,
 only a few coordinates of the gradient need to be evaluated at each iteration. The number of exact gradients that need to be evaluated will depend on both the sparsity of this atomic set and the subsampling rate. For example, in the case of the $\ell_1$ ball, the extreme atoms have a single nonzero coefficient, and so RFW only needs to compute on average $d\eta$ gradient coefficients at each iteration, where $d$ denotes the ambient dimension.

\setlength{\textfloatsep}{10pt}% Less vertical space after algorithm
\begin{algorithm}[t]
  % \begin{algorithmic}[1]
  {\bfseries Input:}  $\xx_0 \in \mathcal{M}$, sampling ratio $0 < \eta \leq 1$.
  \For{$t=0, 1 \ldots, T $}{
      Choose $\mathcal{A}_t$ such that $\mathcal{P}(\vv\!\in\!\mathcal{A}_t)\!=\!\eta$ for all $\vv \in \mathcal{A}$\label{line:subsample}

      $\ss_{t} = \LMO(\nabla f(\xx_t), \mathcal{A}_t)$

      \Variantone(\hfill $\triangleright$ set $\gamma_t$ by line-search){
      $\gamma_t = \argmax_{\gamma \in [0, 1]} f((1 - \gamma_t)\xx_{t} + \gamma_t \ss_{t})$\label{alg:var0_1}
      }

      \Varianttwo{$\gamma_t=\text{clip}_{[0,1]}(\langle -\nabla f(\xx_{t}), \ss_t-\xx_t\rangle/{C_f})$
      }
      $\xx_{t+1} = (1 - \gamma_t)\xx_{t} + \gamma_t \ss_{t}$
    }
  % \end{algorithmic}
    \caption{Randomized Frank-Wolfe algorithm}\label{algo:Randomized_FW_general}
\end{algorithm}

%\paragraph{Lost property.}
%Because the linear minimization is done on a subset of the constraint set, the randomized LMO (Line \ref{line:subsample} in Algorithm \ref{algo:Randomized_FW_general}) does not yield anymore a dual gap guarantee. To stop the algorithm, a safe and computationally saving way is to perform a full LMO every $k\floor{\frac{1}{\eta}}$, with $k\in\mathbb{N}^*$.

\paragraph{Stopping criterion.} A side-effect of subsampling the linear oracle is that $\langle -\nabla f(\xx_t);\ss_t-\xx_t\rangle$, where $\ss_t$ is the atom selected by the randomized linear oracle is not, unlike in the non-randomized algorithm, an upper bound on $f(\xx_t)-f(\xx^*)$. This property is a feature of FW algorithms that cannot be retrieved in our variant. As a replacement, the stopping criteria that we propose is to compute a full LMO every $k\floor{\frac{1}{\eta}}$ iterations, with $k\in\mathbb{N}^*$ ($k=2$ is a good default value).

\subsection{Analysis}
In this subsection we prove an $\mathcal{O}(1/t)$ convergence rate for the RFW algorithm. As is often the case for FW-related algorithms, our convergence result will be stated in terms of the \emph{curvature constant} $C_f$, which is defined as follows for a convex and differentiable function $f$ and a convex and compact domain $\mathcal{M}$:
%
%
% The  \emph{curvature constant} of $f$ is a measure of deviation from linearity defined as follows:
% % As for other FW variants, our analysis will depend on the  \emph{curvature constant} of $f$, which is defined as follows:
% . , this is defined as
\begin{equation*}
~C_f\defas\!\!\!\!\underset{\substack{\xx,\ss\in\mathcal{M},\gamma\in [0,1] \\ \yy=\xx+\gamma (\ss-\xx)}}{\text{sup }} {\frac{2}{\gamma^2}\big( f(\yy)-f(\xx)-\langle\nabla f(\xx),\yy-\xx\rangle \big)}.
\end{equation*}
It is worth mentioning that a bounded curvature constant $C_f$ corresponds to a Lipschitz assumption on the gradient of $f$~\citep{jaggi2013revisiting}.

\begin{theorem}\label{th:expectation_rate} Let $f$ be a function with bounded smoothness constant $C_f$ and subsampling parameter $\eta\in(0,1]$. Then
Algorithm \ref{algo:Randomized_FW_general} (in both variants) converges towards a solution of~\eqref{eq:obj_fun}. Furthermore, the following inequality is satisfied:
\begin{equation}
\mathbb{E}[h(\xx_{T})]\leq \frac{2(C_f+f(\xx_0)-f(\xx^*))}{\eta T+2}~,
\end{equation}
where $\mathbb{E}$ is a full expectation over all randomness until iteration $T$.
\end{theorem}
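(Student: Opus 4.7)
The plan is to derive a one-step conditional recurrence of the form $\mathbb{E}[h(\xx_{t+1}) \mid \xx_t] \leq (1 - \eta\gamma)\, h(\xx_t) + \tfrac{\eta \gamma^2}{2} C_f$ valid for every $\gamma \in [0,1]$, and then to unroll it by induction with the oblivious choice $\gamma = 2/(\eta t + 2)$.

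The first ingredient is the quadratic upper bound furnished by the definition of $C_f$: for every $\gamma \in [0,1]$ and every $\ss_t \in \mathcal{M}$, $f((1-\gamma)\xx_t + \gamma \ss_t) \leq f(\xx_t) + \gamma \langle \nabla f(\xx_t), \ss_t - \xx_t\rangle + \tfrac{\gamma^2}{2} C_f$. The second observation is that both step-size rules in Algorithm~\ref{algo:Randomized_FW_general} produce an $\xx_{t+1}$ at least as good as this quadratic upper bound allows for any fixed $\gamma$: Variant~1 minimizes $f$ along the segment and hence dominates any single $\gamma$, while Variant~2's closed-form $\gamma_t$ is exactly the $[0,1]$-constrained minimizer of the quadratic bound. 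In particular, taking $\gamma = 0$ already yields the monotonicity $h(\xx_{t+1}) \leq h(\xx_t)$ in both variants.

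The probabilistic core is to introduce the event $A_t := \{\ss^*_t \in \mathcal{A}_t\}$, where $\ss^*_t \in \argmin_{\vv \in \mathcal{A}} \langle \nabla f(\xx_t), \vv\rangle$ is the deterministic FW vertex. By the sampling hypothesis, $\mathcal{P}(A_t \mid \xx_t) = \eta$. On $A_t$ the randomized LMO attains the same linearization value as the full LMO, so $\langle \nabla f(\xx_t), \ss_t - \xx_t\rangle \leq \langle \nabla f(\xx_t), \xx^* - \xx_t\rangle \leq -h(\xx_t)$, where the first inequality decomposes $\xx^*$ as a convex combination of atoms and uses global minimality of $\ss^*_t$, and the second uses convexity of $f$. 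Substituting into the quadratic bound gives $h(\xx_{t+1}) \leq (1 - \gamma)\, h(\xx_t) + \tfrac{\gamma^2}{2} C_f$ on $A_t$. On $A_t^c$ I discard the linear term entirely and use only the monotonicity $h(\xx_{t+1}) \leq h(\xx_t)$. Averaging with weights $\eta$ and $1-\eta$ delivers the advertised conditional recurrence.

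Taking full expectations and writing $H_t := \mathbb{E}[h(\xx_t)]$ gives $H_{t+1} \leq (1 - \eta\gamma) H_t + \tfrac{\eta \gamma^2}{2} C_f$ for every $\gamma \in [0,1]$. Choosing $\gamma = 2/(\eta t + 2)$ (admissible since $\eta \in (0,1]$ and $t \geq 0$), an induction on $t$ with hypothesis $H_t \leq 2(C_f + h(\xx_0))/(\eta t + 2)$ closes the argument: the base case $t=0$ reduces to $h(\xx_0) \leq C_f + h(\xx_0)$, and the inductive step is a routine algebraic manipulation that only uses $\eta \in (0,1]$. The sole conceptual obstacle is the $A_t^c$ handling: on this event $\ss_t$ may well be an ascent direction, and trying to control $\mathbb{E}[\langle \nabla f(\xx_t), \ss_t - \xx_t\rangle \mid \xx_t]$ directly cannot succeed in general. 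The right move is to throw the linear term away and lean on the monotonicity built into both step-size rules; this is exactly what produces the clean multiplicative factor $\eta$ in front of the standard FW recurrence, instead of a dimension- or geometry-dependent constant.
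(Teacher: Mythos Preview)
Your proof is correct and follows essentially the same route as the paper's: both derive the conditional recurrence $\mathbb{E}[h(\xx_{t+1})\mid \xx_t] \leq (1-\eta\gamma)h(\xx_t) + \tfrac{\eta\gamma^2}{2}C_f$ by splitting on the event that the full-LMO atom lies in the subsample (using monotonicity on the complement, which the paper phrases as ``the minimum is non-positive since it is zero for $\gamma=0$''), and then close by induction with the oblivious step $\gamma = 2/(\eta t + 2)$. Your event $A_t=\{\ss_t^*\in\mathcal{A}_t\}$ is a slightly cleaner formalization than the paper's $\{\ss_t=\widetilde{\ss}_t\}$ since it sidesteps tie-breaking, but the arguments are otherwise identical.
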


\begin{proof}
See \ref{apx:rfw_proof}.
\end{proof}

The rate obtained in the previous theorem is similar to known bounds for FW. For example, \citep[Theorem 1]{jaggi2013revisiting} established for FW a bound of the form
\begin{eqnarray}
h(\xx_T)\leq \frac{2C_f}{T+2}~.
\end{eqnarray}
This is similar to the rate of Theorem \ref{th:expectation_rate}, except for the factor $\eta$ in the denominator. Hence, if our updates are $\eta$ times as costly as the full FW update (as is the case e.g. for the $\ell_1$ ball), then the theoretical convergence rate is the same.
% From the theoretical point of view, there does not seem to be a clear advantage of RFW. Furthermore,
This bound is likely tight, as in the worst case one will need to sample the whole atomic set to decrease the objective if there is only one descent direction. This is however a very pessimistic scenario, and in practice good descent directions can often be found without sampling the whole atomic set.
As we will see in the experimental section, despite these conservative bounds, the algorithm often exhibits large computational gains with respect to the deterministic algorithm.

\section{Randomized Away-steps Frank-Wolfe}\label{s:rafw}

% A popular variant of the FW algorithm is the Away-steps FW variant of \citet{guelat1986some}.
% This algorithm can be applied in the case of a polytope domain and adds the option to move away from an atom in the current representation of the iterate. It is hence slightly less general than the FW algorithm, as it only applies to domains with a finite atomic representation, but it was shown recently to have much better convergence properties, such as linear (i.e. exponential) convergence rates for generally-strongly convex objectives \citep{garber2013linearly,beck2013convergence,lacoste2015global}.

A popular variant of the FW algorithm is the Away-steps FW variant of \citet{guelat1986some}. This algorithm adds the option to move away from an atom in the current representation of the iterate. In the case of a polytope domain, it was recently shown to have much better convergence properties, such as linear (i.e. exponential) convergence rates for generally-strongly convex objectives \citep{garber2013linearly,beck2013convergence,lacoste2015global}.

In this section we describe the first provably convergent randomized version of the Away-steps FW, which we name \emph{Randomized Away-steps FW} (RAFW). We will assume throughout this section that the domain is a polytope, i.e. that $\mathcal{M} = \conv(\mathcal{A})$, where $\mathcal{A}$ is a finite set of atoms. We will make use of the following notation.
\begin{itemize}[leftmargin=*]
\item \emph{Active set.} We denote by $\mathcal{S}_t$ the active set of the current iterate, i.e. $\xx_t$ decomposes as $\xx_t=\sum_{\vv\in\mathcal{S}_t}{\alpha^{(t)}_{\vv}\vv}$, where $\alpha^{(t)}_{\vv} > 0$ are positive weights that are iteratively updated.

\item \emph{Subsampling parameter.} %The method depends on a subsampling parameter $p$ that controls the amount of computation per iteration of the LMO. Since in this case the atomic set is finite, $p$ denotes an integer $1 \leq p \leq |\mathcal{A}|$, and is equivalent to $\eta |\mathcal{A}|$ in the RFW formulation of \S\ref{s:fw}.
The method depends on a subsampling parameter $p$. It controls the amount of computation per iteration of the LMO. In this case, the atomic set is finite and $p$ denotes an integer $1 \leq p \leq |\mathcal{A}|$. This sampling rate is approximately $\floor{\eta |\mathcal{A}|}$ in the RFW formulation of \S\ref{s:fw}.
\end{itemize}

The method is described in Algorithm~\ref{algo:Randomized_away_general} and, as in the Away-steps FW, requires computing two linear minimization oracles at each iteration. Unlike the deterministic version, the first oracle is computed on the subsampled set ${\mathcal{S}_t \cup \mathcal{A}_t}$ (Line~\ref{line:rafw_subsampled_lmo}), where $\mathcal{A}_t$ is a subset of size 
${\min\{p, |\mathcal{A}\!\setminus\!\mathcal{S}_t|\}}$, sampled uniformly at random from  $\mathcal{A}\setminus\!\mathcal{S}_t$. The second LMO (Line \ref{line:away_lmo}) is computed on the active set, which is also typically much smaller than the atomic domain.

As a result of both oracle calls, we obtain two potential descent directions, the RFW direction $\dd_t^{\text{FW}}$ and the Away direction $\dd_t^{\text{A}}$. The chosen direction is the one that correlates the most with the negative gradient, and a maximum step size is chosen to guarantee that the iterates remain feasible (Lines \ref{line:decision_step}--\ref{l:gammamax2}).

\paragraph{Updating the support.} 

Line \ref{line:RAFW_update_x} requires updating the support and the associated $\alpha$ coefficients.
For a FW step we have $\mathcal{S}_{t+1}=\{\ss_t\}$ if $\gamma_t=1$  and otherwise $\mathcal{S}_{t+1}=\mathcal{S}_{t}\cup\{\ss_t\}$. The corresponding update of the weights is $\alpha^{(t+1)}_{\vv}=(1-\gamma_t)\alpha^{(t)}_{\vv}$ when $\vv\in\mathcal{S}_t\setminus\{\ss_t\}$ and $\alpha^{(t+1)}_{\ss_t}=(1-\gamma_t)\alpha^{(t)}_{\ss_t}+\gamma_t$ otherwise.

\begin{algorithm}[t]
  \KwIn{$\xx_{0} \in \mathcal{M}$, $\xx_0 = \sum_{\vv\in\mathcal{A}}{\alpha^{(0)}_{\vv}\vv}$ with $|\mathcal{S}_0|=s$, a subsampling parameter $1 \leq p\leq |\mathcal{A}|$.}
  \For{$t=0, 1 \ldots, T $}{
      %$\mathcal{A}_{t} \gets $ draw $p$ elements with replacement from ${\mathcal{A}\setminus \mathcal{S}_{t}}$.\label{line:sub_sampling}
      
      Get $\mathcal{A}_{t}$ by sampling $\min\{p,\!|\mathcal{A}\!\setminus\!\mathcal{S}_t|\}$ elements uniformly  from  ${\mathcal{A}\!\setminus\!\mathcal{S}_{t}}$.\label{line:sub_sampling}
      
      % $\mathcal{A}_{t} = $ \label{line:def_A_t}
      
      Compute $\ss_{t} = \LMO(\nabla f(\xx_t), \mathcal{S}_{t}\cup \mathcal{A}_{t})$\label{line:rafw_subsampled_lmo}
      
      Let $\dd_{t}^{\text{FW}} = \boldsymbol{s}_{t}-\xx_{t}$\hfill $\triangleright$ \text{RFW direction}\label{direction:sub_FW}
      
      Compute $\boldsymbol{v}_{t} = \LMO(-\nabla f(\xx_t), \mathcal{S}_{t})$ \label{line:away_lmo}

      Let $\dd_{t}^{A}=\xx_{t}-\vv_{t}$.\label{direction:away_FW}\hfill $\triangleright$ \text{Away direction}
      
      \eIf{$\langle - \nabla f(\xx_t) , \dd_{t}^{\text{FW}} \rangle \geq \langle - \nabla f(\xx_t) ,\dd_{t}^{A}\rangle$ \label{line:decision_step}}{
      
      $\dd_t=\dd_{t}^{\text{FW}}$ and $\gamma_{\text{max}}=1$\label{l:gammamax} \hfill $\triangleright$ \text{FW step}
      }{
      $\dd_t=\dd_{t}^{A}$ and $\gamma_{\text{max}}\!=\!{\alpha^{(t)}_{\vv_t}}/{(1\!-\!\alpha^{(t)}_{\vv_t})}$\label{l:gammamax2} \hfill \!\!\!$\triangleright$ \text{Away step}
      }
      
      Set $\gamma_t$ by line-search, with
      \qquad $\gamma_t = \argmax_{\gamma \in [0, \gamma_{\text{max}}]} f((1 - \gamma_t)\xx_{t} + \gamma_t)$

      Let $\xx_{t+1} = \xx_{t} + \gamma_t \dd_t$\hfill $\triangleright$ update $\alpha^{(t+1)}\!$ (see text)\label{line:RAFW_update_x}
      
      Let $\mathcal{S}_{t+1} = \{\vv \in \mathcal{A} \text{ s.t. } \alpha_\vv^{(t+1)} > 0\}$ 
    }
    % \textbf{return}: $\xx^{k+1}$
  \caption{Randomized Away-steps FW (RAFW)} \label{algo:Randomized_away_general}
\end{algorithm}

For an away step we instead have the following update rule.
When $\gamma_t=\gamma_{\text{max}}$ (which is called a \textit{drop step}), then 
 $\mathcal{S}_{t+1}=\mathcal{S}_{t}\setminus\{\vv_t\}$. Combined with $\gamma_{\text{max}}<1$ (or equivalently $\alpha_{\vv_t}\leq \frac{1}{2}$) we call them \textit{bad drop step}, as it corresponds to a situation in which we are not able to guarantee a geometrical decrease of the dual gap.
 
For away steps in which $\gamma_t < \gamma_{\text{max}}$, the away atom is not removed from the current representation of the iterate. Hence $\mathcal{S}_{t+1}=\mathcal{S}_{t}$, $\alpha^{(t+1)}_{\vv}={(1+\gamma_t)\alpha^{(t)}_{\vv}}$ for $\vv\in\mathcal{S}_{t}\setminus \{\vv_t\}$ and $\alpha^{(t+1)}_{\vv_t}={(1+\gamma_t)}\alpha^{(t)}_{\vv_t}-\gamma_t$ otherwise.
 %When $\gamma_{\text{max}}<1$ (or equivalently $\alpha_{\vv_t}\leq \frac{1}{2}$) (which we call \textit{bad drop step}), then $\mathcal{S}_{t+1}=\mathcal{S}_{t}$ and $\alpha^{(t+1)}_{\vv}={(1+\gamma_t)\alpha^{(t)}_{\vv}}$ for $\vv\in\mathcal{S}_{t}\setminus \{\vv_t\}$ and $\alpha^{(t+1)}_{\vv_t}={(1+\gamma_t)}\alpha^{(t)}_{\vv_t}-\gamma_t$ otherwise.
 %The name bad drop step comes from the fact that in this case we are not able to guarantee a geometrical decrease of the dual gap. Fortunately,  we will be able to upper-bound the number of drop steps and consequently that of bad drop steps. 

Note that when choosing Away step in Line \ref{l:gammamax2}, it cannot happen that $\alpha_{\vv_t} = 1$. Indeed this would imply $\xx_t = \vv_t$, and so $\dd_t^A = 0$. % Because $\vv_t\in \mathcal{S}_t\cup\mathcal{A}_t$, necessarily $\langle - \nabla f(\xx_t) , \dd_{t}^{\text{FW}} \rangle \geq 0$ and hence a FW choice step.
Since we would have $\mathcal{S}_t=\{\vv_t\}$ and the LMO of Line \ref{line:rafw_subsampled_lmo} is performed over $\mathcal{S}_t\cup\mathcal{A}_t$, we necessarily have $\langle - \nabla f(\xx_t) , \dd_{t}^{\text{FW}} \rangle \geq 0$. It thus leads to a choice of FW step, contradiction.

\paragraph{Per iteration cost.} Establishing the per iteration cost of this algorithm is not as straightforward as for RFW, as the cost of some operations depends on the size of the active set, which varies throughout the iterations. However, for problems with sparse solutions, we have observed empirically that the size of the active set remains small, making the cost of the second LMO and the comparison of Line \ref{line:decision_step} negligible compared to the cost of an LMO over the full atomic domain. In this regime, and assuming that the atomic domain has a sparse structure that allows gradient coordinate subsampling, RAFW can achieve a per iteration cost that is, like RFW, roughly $|\mathcal{A}|/p$ times lower than that of its deterministic counterpart.

\subsection{Analysis}

We now provide a convergence analysis of the Randomized Away-steps FW algorithm.
These convergence results are stated in terms of the away curvature constant $C_f^A$ and the geometric strong convexity $\mu_f^A$, which are described in \ref{apx:linear_proof} and in~\citep{lacoste2015global}. Throughout this section we assume that $f$ has bounded $C_f^A$ (note that the usual assumption of Lipschitz continuity of the gradient over compact domain implies this) and strictly positive geometric strong convexity constant $\mu_f^A$.

\begin{theorem}\label{th:RAFW_expectation_result}  Consider the set $\mathcal{M}=\conv(\mathcal{A})$, with $\mathcal{A}$ a finite set of extreme atoms, after $T$ iterations of Algorithm~\ref{algo:Randomized_away_general} (RAFW) we have the following linear convergence rate
\begin{eqnarray}\label{eq:RAFW_bound_result}
\mathbb{E}\big[ h(\xx_{T+1})\big]\leq \big(1- \eta^2 \rho_f\big)^{\max\{0,\floor{{(T-s)}/{2}}\}} h(\xx_{0})~,
\end{eqnarray}
with $\rho_f=\frac{\mu_f^A}{4C_f^A}$, $\eta=\frac{p}{|\mathcal{A}|}$ and $s=|\mathcal{S}_0|$.
\end{theorem}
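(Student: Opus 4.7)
The plan is to adapt the Away-step FW analysis of \citet{lacoste2015global} to the randomized setting by conditioning on a favorable sampling event at each iteration. Introduce the (unobserved) true FW atom $\ss_t^{\star} := \LMO(\nabla f(\xx_t),\mathcal{A})$, noting that the away atom $\vv_t$ of Algorithm~\ref{algo:Randomized_away_general} already coincides with its deterministic counterpart since line~\ref{line:away_lmo} performs a full LMO over $\mathcal{S}_t$. Define the good-sampling event $E_t := \{\ss_t^{\star}\in\mathcal{S}_t\cup\mathcal{A}_t\}$; a direct counting argument gives $\mathcal{P}(E_t\mid\mathcal{F}_t)\geq\eta$, since either $\ss_t^{\star}\in\mathcal{S}_t$ (in which case $E_t$ is automatic) or $\ss_t^{\star}$ falls in the size-$p$ subsample with probability $p/|\mathcal{A}\setminus\mathcal{S}_t|\geq p/|\mathcal{A}|=\eta$. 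Following the deterministic analysis, I would partition iterations into \emph{good steps} (FW step, away step with $\gamma_t<\gamma_{\max}$, or drop step with $\alpha^{(t)}_{\vv_t}>1/2$) and \emph{bad drop steps}, noting that exact line search ensures $h(\xx_{t+1})\leq h(\xx_t)$ unconditionally on every iteration.

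On a good step, the usual curvature-based descent lemma gives $h(\xx_t)-h(\xx_{t+1})\geq \hat g_t^2/(2C_f^A)$ with $\hat g_t := \langle -\nabla f(\xx_t),\dd_t\rangle$. The key randomized gap estimate is that on $E_t$ the sampled LMO dominates $\ss_t^{\star}$, so together with the identity $\dd_t^{\text{FW}}+\dd_t^A = \ss_t-\vv_t$ and the max rule at line~\ref{line:decision_step}, one obtains $\hat g_t\geq g_t^{\star}/2$ where $g_t^{\star} := \langle -\nabla f(\xx_t),\ss_t^{\star}-\vv_t\rangle$. Since $\hat g_t\geq 0$ on $E_t^c$ as well, this yields $\mathbb{E}[\hat g_t\mid\mathcal{F}_t]\geq(\eta/2)g_t^{\star}$, and Jensen's inequality then gives $\mathbb{E}[\hat g_t^2\mid\mathcal{F}_t]\geq(\eta^2/4)(g_t^{\star})^2$; this is the step that introduces the $\eta^2$ factor rather than the more optimistic $\eta$. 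Combining with the geometric strong convexity inequality $(g_t^{\star})^2\geq 2\mu_f^A h(\xx_t)$ then yields
\begin{equation*}
\mathbb{E}[h(\xx_{t+1})\mid\mathcal{F}_t] \;\leq\; (1-\eta^2\rho_f)\,h(\xx_t)
\end{equation*}
on every good step, with $\rho_f=\mu_f^A/(4C_f^A)$.

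It remains to chain these per-step bounds. A deterministic bookkeeping argument on $|\mathcal{S}_t|$---each bad drop shrinks the support by one, each FW step enlarges it by at most one, and $|\mathcal{S}_t|\geq 1$ throughout---shows that the number of good steps through iteration $T$ is at least $\floor{(T-s)/2}$. Iterating the good-step inequality and using $h$-monotonicity on bad drops via the tower property then produces the claimed bound. The hard part will be precisely this last chaining step: the classification of each iteration as good or bad drop is itself random and $\mathcal{F}_t$-measurable, so the tower expansion must be carried out across the stopping times of successive good steps while invoking $\mathbb{E}[h(\xx_{t+1})\mid\mathcal{F}_t]\leq h(\xx_t)$ on the intervening bad drops. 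A secondary subtle point will be to verify that the curvature bound $\hat g_t^2/(2C_f^A)$ survives the constrained line search in good drop steps, which is exactly why the $\alpha^{(t)}_{\vv_t}>1/2$ threshold is built into the classification rather than excluding all drop steps.
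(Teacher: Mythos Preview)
Your overall architecture is close to the paper's, but there is a genuine gap in the middle step that makes the argument, as written, not go through.

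\textbf{The measurability problem.} You assert that the good/bad-drop classification is ``$\mathcal{F}_t$-measurable'', yet you also compute $\mathbb{E}[\hat g_t\mid\mathcal{F}_t]$ by averaging over the $t$-th subsample. These cannot both be true: if $\mathcal{F}_t$ is the pre-sampling filtration (so that averaging over $\mathcal{A}_t$ is meaningful), then whether the algorithm takes an away step---and hence whether $z_t=0$ or $z_t=1$---is \emph{not} $\mathcal{F}_t$-measurable, because the comparison on line~\ref{line:decision_step} depends on the sampled $\ss_t$. Conversely, if $\mathcal{F}_t$ includes the $t$-th sample then $\hat g_t$ is $\mathcal{F}_t$-measurable and your Jensen step is vacuous.

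\textbf{Why this breaks the descent bound.} Your curvature lemma gives $h_t-h_{t+1}\geq \hat g_t^2/(2C_f^A)$ only on the event $\{z_t=1\}$, so the quantity you actually need to lower-bound is $\mathbb{E}\!\left[\mathbb{1}_{\{z_t=1\}}\hat g_t^2\,\middle|\,\mathcal F_t\right]$ or, equivalently, $\mathbb{E}\!\left[\hat g_t^2\,\middle|\,\mathcal F_t,\,z_t=1\right]$. Your Jensen argument bounds the \emph{unconditional} $\mathbb{E}[\hat g_t^2\mid\mathcal F_t]$. These are different, and the difference matters: on the good-sampling event $E_t$ you may still land in a bad drop step (the algorithm can pick the away direction even when $\ss_t^{\star}\in\mathcal A_t$, and that away step can drop with $\gamma_{\max}<1$), so the mass that makes $\mathbb{E}[\hat g_t]$ large may sit on $\{z_t=0\}$ where it contributes nothing to progress. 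Consequently the line ``$\mathbb{E}[h(\xx_{t+1})\mid\mathcal F_t]\leq(1-\eta^2\rho_f)h(\xx_t)$ on every good step'' is not well-defined and is not what your preceding inequalities establish.

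\textbf{How the paper closes this gap.} The paper conditions \emph{first} on $\{z_t=1\}$ and then lower-bounds $\mathcal{P}(g_t=\widetilde g_t\mid \xx_t,\,z_t=1)\geq\eta^2$ by a case analysis (their Lemma~\ref{lemma:proba_conditional_non_drop_step}). The $\eta^2$ there does not come from Jensen; it comes from the case $\widetilde g_t\geq g_t^A$, where $\{g_t=\widetilde g_t\}$ forces a FW step but the conditioning set $\{z_t=1\}$ also contains away-step outcomes, and bounding the ratio costs a second factor of $\eta$. With that conditional probability in hand, the chaining is done by an induction on $T$ that splits on the value of $z_0$ and tracks $|\mathcal S_t|$, exactly the ``hard part'' you flagged. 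Your proposal is missing this conditional-probability lemma, and without it the per-good-step contraction cannot be combined with the (pathwise) drop-step count to yield~\eqref{eq:RAFW_bound_result}.
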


\begin{proof}
See \ref{apx:linear_proof}.
\end{proof}

\begin{sketchproof}
Our proof structure roughly follows that of the deterministic case in \citep{lacoste2015global,beck2013convergence} with some key differences due to the LMO randomness, and can be decomposed into three parts. 

The \emph{first part} consists in upper bounding $h_t$ and is no different from the proof of its deterministic counterpart \citep{lacoste2015global,beck2013convergence}.

The \emph{second part} consists in lower bounding the progress $h_t - h_{t+1}$. For this algorithm we can guarantee a decrease of the form 
\begin{eqnarray}\label{eq:POC}
h_{t+1}\leq h_t \big( 1-\rho_f \big(\frac{g_t}{\tilde{g}_t}\big)^2  \big)^{z_t}~,
\end{eqnarray}
where $g_t = \langle -\nabla f(\xx_t), \ss_t - \vv_t\rangle$ is the \textit{partial pair-wise dual gap} while $\tilde{g}_t$ is the \textit{pair-wise dual gap}, in which $\ss_t$ is replaced by the result of a full (and not subsampled) LMO.

We can guarantee a possible geometric decrease on $h_t$ at each iteration, except for bad drop steps,  where we can only secure $h_{t+1}\leq h_t$. We mark these by setting $z_t=0$.

One crucial issue is then to quantify ${g_t}/{\tilde{g}_t}$. This can be seen as a measure of the quality of the subsampled oracle: if it selects the same atom as the non-subsampled oracle the quotient will be 1, in all other cases it will be $\leq 1$.% We will lower bound the probability of $g_t = \tilde{g}_t$ in Lemma \ref{lemma:proba_conditional_non_drop_step}, and lower bound the quotient by $0$ in the other cases.

%What is left now is to bound the number of bad drop steps (i.e., where $z_t=0$). {\blue Phrase on how you do this.}

To ensure a geometrical decrease we further study the probability of events $z_t=1$ and $\widetilde g_t = g_t$: first, we produce a simple bound on the number of bad drop steps (where $z_t=0$). Second, when $z_t=1$ holds, Lemma \ref{lemma:proba_conditional_non_drop_step} provides a lower bound on the probability of $g_t=\widetilde g_t$.

The \emph{third and last part} of the proof analyzes the expectation of the decrease rate $\prod_{t=0}^{T}{(1-\rho_f \big(\frac{g_t}{\tilde{g}_t}\big)^2 )^{\zz_t}}$ given the above discussion. We produce a conservative bound assuming the maximum possible number of bad drop steps. The key element in this part is to make this maximum a function of the size of the support of the initial iterate and of the number of iteration. The convergence bound is then proven by induction. \end{sketchproof}

\paragraph{Comparison with deterministic convergence rates.} The rate for away Frank-Wolfe in \citep[Theorem 8]{lacoste2015global}, after $T$ iteration is 
\begin{eqnarray}
 h(\xx_{T+1})\leq \big(1- \rho_f\big)^{\floor{{T}/{2}}} h(\xx_{0})~.
\end{eqnarray}
Due to the dependency on $\eta^2$ of the convergence rate in Theorem~\ref{th:RAFW_expectation_result}, our bound does not show that RAFW is computationally more efficient than AFW. Indeed we use a very conservative proof technique in which we measure progress only when the sub-sampling oracle equals the full one. Also, the cost of both LMOs depends on the support of the iterates which is unknown a priori except for a coarse upper bound (e.g. the support cannot be more than the number of iterations). Nevertheless, the numerical results do show speed ups compared to the deterministic method.

\paragraph{Beyond strong convexity.}
The strongly convex objective assumption may not hold for many problem instances. However, the linear rate easily holds for $f$ of the form $g(\boldsymbol{A}\xx)$ where $g$ is strongly convex and $\boldsymbol{A}$ a linear operator. This type of function is commonly know as a $\tilde\mu$-generally strongly convex function \cite{beck2013convergence,wang2014iteration} or \cite{lacoste2015global} (see ``Away curvature and geometric strong convexity'' in \ref{apx:linear_proof} for definition). The proof simply adapts that of \citep[Th. 11]{lacoste2015global} to our setting.

\begin{theorem}\label{th:Away_CV_Generalized_Strongly_Convex}  Suppose $f$ has bounded smoothness constant $C_f^A$ and is  $\tilde{\mu}$-generally-strongly convex.  Consider the set $\mathcal{M}=\text{conv}(\mathcal{A})$, with $\mathcal{A}$ a finite set of extreme atoms. Then after $T$ iterations of Algorithm \ref{algo:Randomized_away_general}, with $s=|\mathcal{S}_0|$ and a $p$ parameter of sub-sampling, we have
\begin{eqnarray}\label{eq:RAFW_convergence_rate_generally_strongly_convex}
\mathbb{E}\big[ h(\xx_{T+1})\big]\leq \big(1- \eta^2 \tilde{\rho}_f\big)^{\max\{0,\floor{\frac{T-s}{2}}\}} h(\xx_{0})~,
\end{eqnarray}
with $\tilde{\rho}_f=\frac{ \tilde{\mu}}{4 C_f^A}$ and $\eta=\frac{p}{|\mathcal{A}|}$.
\end{theorem}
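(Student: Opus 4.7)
The plan is to mirror the three-part structure of the proof of Theorem~\ref{th:RAFW_expectation_result} and replace every invocation of the geometric strong convexity constant $\mu_f^A$ by the generalized counterpart $\tilde{\mu}$, in the spirit of \citep[Th.~11]{lacoste2015global}. Concretely, what changes in the argument is only the step that relates the suboptimality $h_t$ to the pair-wise dual gap $\widetilde g_t$; everything downstream (progress lower bound, bad-drop-step counting, and the induction over iterations) uses only the curvature constant $C_f^A$ and the combinatorics of the active set, and is therefore unaffected.

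First, I would revisit the upper bound on $h_t$. In the strongly convex case, geometric strong convexity yields an inequality of the shape $h_t \leq \widetilde g_t^{\,2}/(2\mu_f^A)$, obtained by combining strong convexity with the pyramidal width of $\mathcal{A}$. For $f(\xx) = g(\boldsymbol{A}\xx)$ with $g$ strongly convex, the same bound holds with $\mu_f^A$ replaced by $\tilde{\mu}$; this is exactly the content of the generalization carried out in \citep[Th.~11]{lacoste2015global}, whose derivation does not use anything specific to the LMO beyond the definition of $\widetilde g_t$ as the full (non-subsampled) pair-wise gap. I would simply quote that inequality and plug it into our framework.

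Next, I would reuse the per-iteration progress bound from the proof of Theorem~\ref{th:RAFW_expectation_result} verbatim. That bound,
\begin{equation*}
h_{t+1} \leq h_t \Bigl(1 - \tilde{\rho}_f \bigl(g_t/\widetilde g_t\bigr)^{2}\Bigr)^{z_t},
\end{equation*}
with $\tilde{\rho}_f = \tilde{\mu}/(4 C_f^A)$, follows from the smoothness-based descent lemma together with the upper bound on $h_t$ updated as above; the indicator $z_t$ and the quotient $g_t/\widetilde g_t$ measure the same algorithmic events as before. Lemma~\ref{lemma:proba_conditional_non_drop_step} (the conditional lower bound on the probability that the subsampled oracle agrees with the full oracle) and the bound on the number of bad drop steps are purely combinatorial/algorithmic statements about the RAFW iteration and remain valid without modification.

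Finally, I would close by repeating the expectation and induction argument from Theorem~\ref{th:RAFW_expectation_result}: conditioning on the realized sequence of good FW/away steps, bounding the expected number of good steps with matching oracles by $\eta^2$ times the total number of non-bad-drop steps, and using the conservative estimate $\max\{0, \lfloor (T-s)/2\rfloor\}$ on the latter to derive \eqref{eq:RAFW_convergence_rate_generally_strongly_convex}. The only anticipated obstacle is a bookkeeping one, namely checking that the generalized inequality from \citep[Th.~11]{lacoste2015global} is stated in terms of the \emph{full} pair-wise gap $\widetilde g_t$ (not the subsampled gap $g_t$), since the quotient $g_t/\widetilde g_t$ in the progress bound must remain well-defined and the induction must still factor cleanly through $\eta^{2}$.
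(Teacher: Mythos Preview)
Your proposal is correct and matches the paper's own proof essentially line for line: the paper also invokes \citep[Th.~11]{lacoste2015global} to replace the bound $h_t \leq \widetilde g_t^{\,2}/(2\mu_f^A)$ by $h_t \leq \widetilde g_t^{\,2}/(2\tilde\mu)$, derives the per-iteration inequality $h_{t+1}\leq h_t\bigl(1-\tilde\rho_f(g_t/\widetilde g_t)^2\bigr)^{z_t}$ with $\tilde\rho_f=\tilde\mu/(4C_f^A)$, and then defers the remainder verbatim to the proof of Theorem~\ref{th:RAFW_expectation_result}. Your bookkeeping remark about ensuring the quoted inequality is stated in terms of the full gap $\widetilde g_t$ is apt (and in fact the paper's displayed version of that inequality has a notational slip writing $g_t$ where $\widetilde g_t$ is meant).
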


\begin{proof}
See end of \ref{apx:linear_proof}.
\end{proof}

\section{Applications}\label{s:Numerical_Results}
%in which situations did we compared...
In this section we compare the proposed methods with their deterministic versions. We consider two regularized least squares problems: one with $\ell_1$ regularization and another one with latent group lasso (LGL) regularization. In the first case, the domain is a polytope and as such the analysis of AFW and RAFW holds.

%Nbr of iterations and 
We will display the FW gap versus number of iterations, and also cumulative number of  computed gradient coefficients, which we will label \textit{``nbr coefficients of grad''}. This allows to better reflect the true complexity of our experiments since sub-sampling the LMO in the problems we consider amounts to computing the gradient on a batch of coordinates.

%wall-clock comparison
In the case of latent group lasso, we also compared the performance of RFW against FW in terms of wall-clock time on a large dataset stored in disk and accessed sequentially in chunks (i.e. in streaming model). %In this setting we perform a wall-clock time comparison of both methods. 

\subsection{Lasso problem}

\paragraph{Synthetic dataset.}
We generate a synthetic dataset following the setting of \cite{lacoste2015global},  with a Gaussian design matrix $A$ of size $(200,500)$ and noisy measurements $\bb = A\xx^* + \boldsymbol\varepsilon$, with $\boldsymbol{\varepsilon}$ a random Gaussian vector and $\xx^*$ a vector with $10\%$ of nonzero coefficients and values in $\{-1,+1\}$.

In Figures \ref{fig:RFW_FW_Lasso} and \ref{fig:RAFW_AFW_Lasso}, we consider a problem of the form \eqref{eq:obj_fun}, where the domain is an $\ell_1$ ball, a problem often referred to as Lasso. We compare FW against RFW, and AFW against RAFW. The $\ell_1$ ball radius set to $40$, so that the unconstrained optimum lies outside the domain.

\paragraph{RFW experiments.}
Figure \ref{fig:RFW_FW_Lasso} compares FW and RFW. Each call to the randomized LMO outputs a direction, likely less aligned with the opposite of the gradient than the direction proposed by FW, which explains why RFW requires more iterations to converge on the upper left graph of Figure \ref{fig:RFW_FW_Lasso}. Each call of the randomized LMO is cheaper than the LMO in terms of number of computed coefficients of the gradient, and the trade-off is beneficial as can be seen on the bottom left graph, where RFW outperforms its deterministic variant in terms of \textit{nbr coefficients of grad}.

Finally, the right panels of Figure \ref{fig:RFW_FW_Lasso} provide an insight on the evolution of the sparsity of the iterate, depending on the algorithm. FW and RFW perform similarly in terms of the fraction of recovered support (bottom right graph). In terms of the sparsity of the iterate, RFW under-performs FW (upper right graph). This can be explained as follows: because of the sub-sampling, each atom of the randomized LMO provides a direction less aligned with the opposite of the gradient than the one provided by the LMO. Each update in such a direction may result in putting weight on an atom that would better be off the representation of the iterate. It impacts the iterate all along the algorithm as vanilla FW removes past atoms from the representation only by multiplicatively shrinking their weight.

% each atom of the randomized LMO will be less likely to be as good as that of the LMO. Such choices impact the sparsity of the iterate all along the algorithm as vanilla FW does not allow us to directly remove past atoms from the representation (a problem solved by AFW below). 

\begin{figure}
\begin{minipage}{0.99\linewidth}
\includegraphics[width=\linewidth]{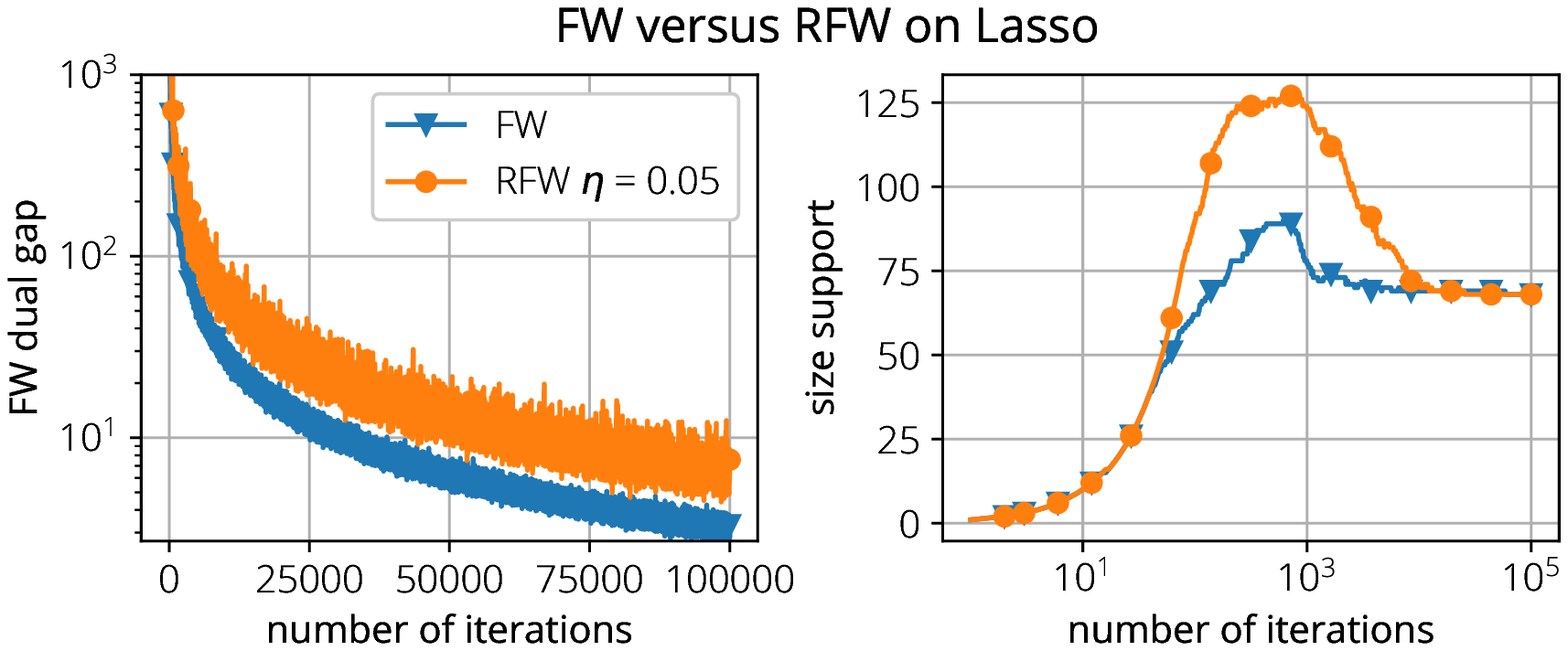}
\end{minipage}

\begin{minipage}{0.99\linewidth}
\includegraphics[width=\linewidth]{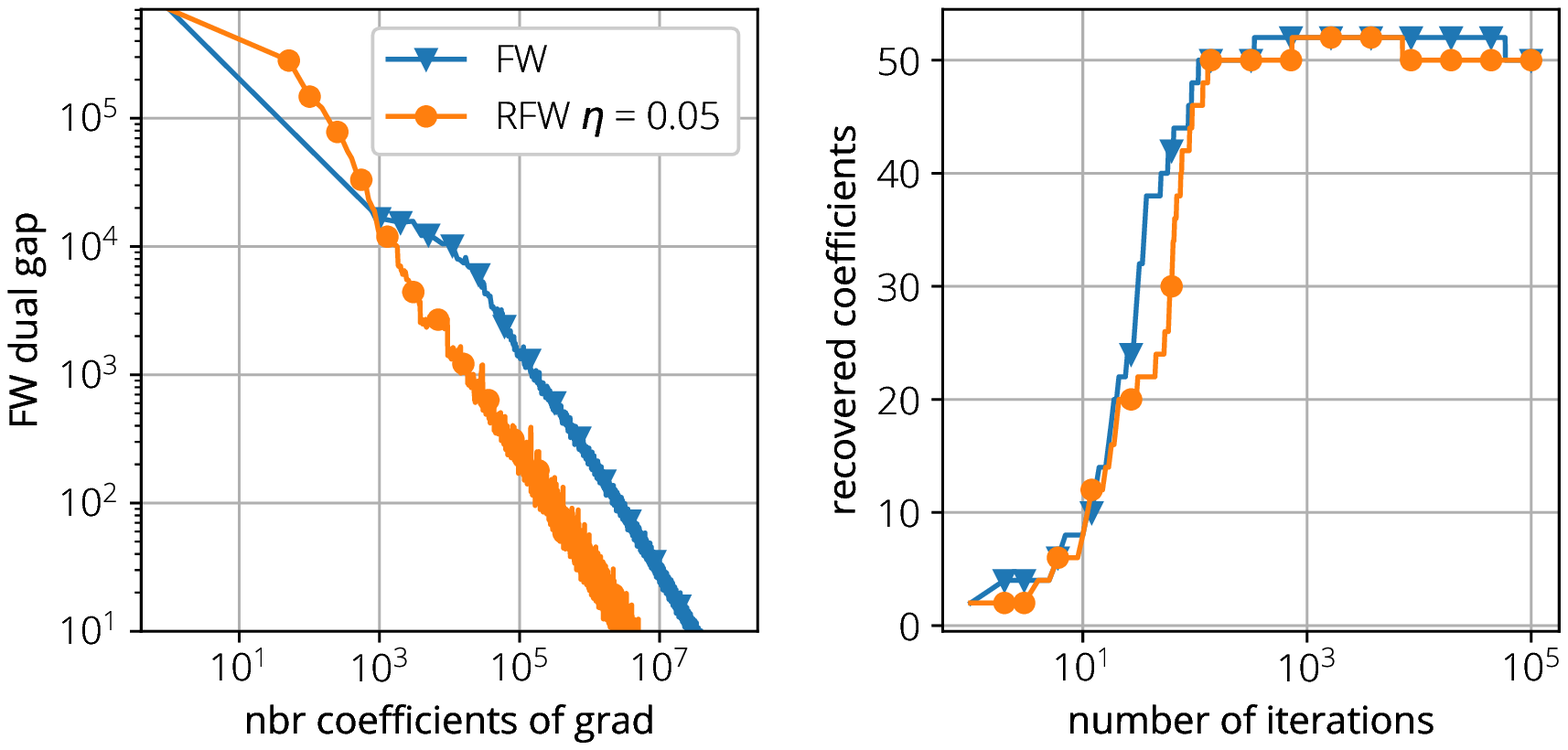}
\end{minipage}

    \caption{Performance of FW against RFW with subsampling parameter $\eta =\frac{p}{|\mathcal{A}|} = 0.05$ (chosen arbitrarily) on the lasso problem. {\em Upper left:} progress in FW dual gap versus number of iterations. {\em Lower left:} progress of the FW dual gap versus cumulative number of computed coefficients of gradient per call to LMO, called \textit{nbr coefficients of grad} here.
    {\em Lower right:} recovered coefficients in support of the ground truth versus number of iterations. {\em Upper right}: size of support of iterate versus number of iterations.} 
    \label{fig:RFW_FW_Lasso}
\end{figure}

\begin{figure}
\begin{minipage}{0.99\linewidth}
\includegraphics[width=\linewidth]{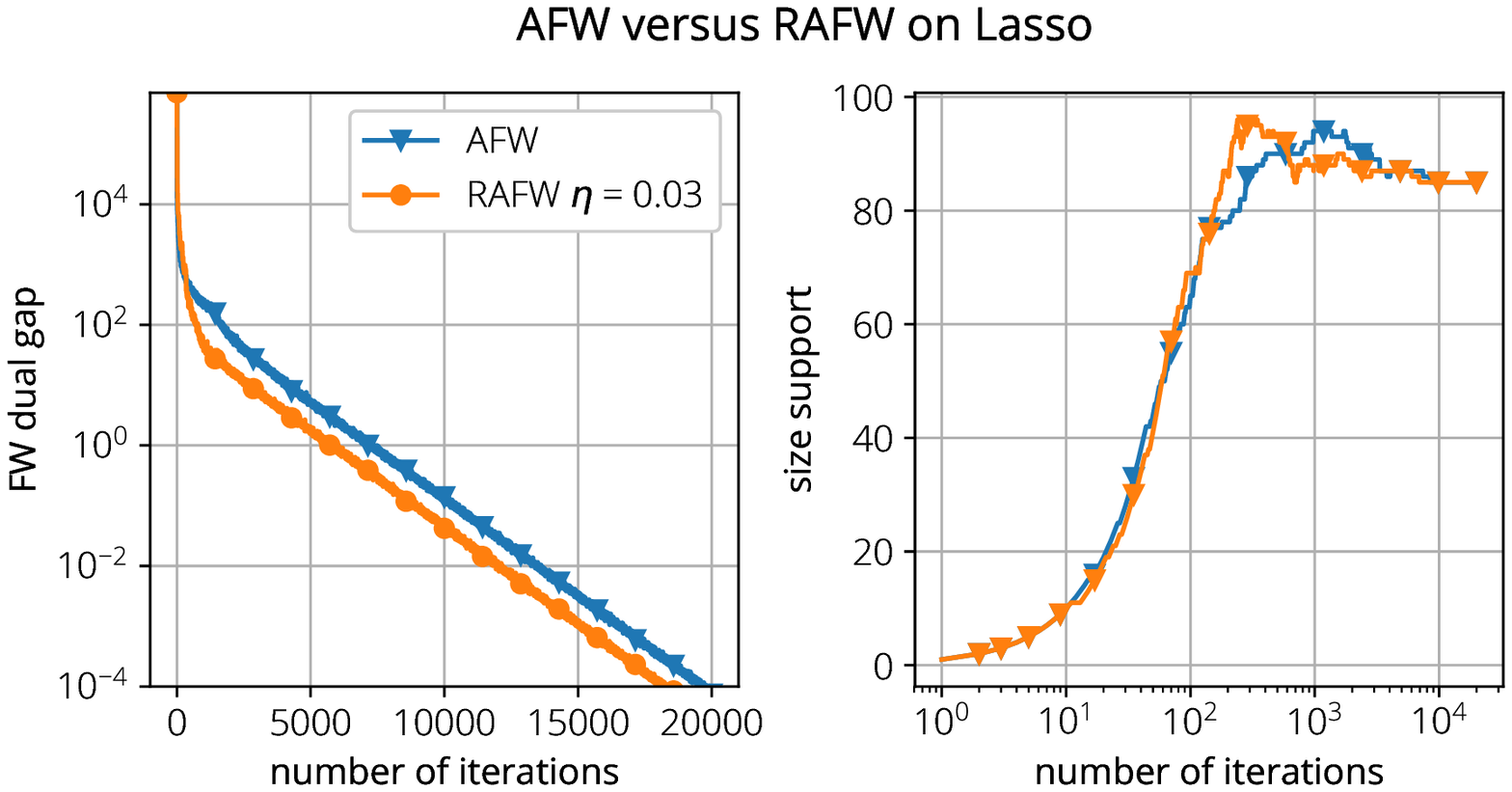}
\end{minipage}

\begin{minipage}{0.99\linewidth}
\includegraphics[width=\linewidth]{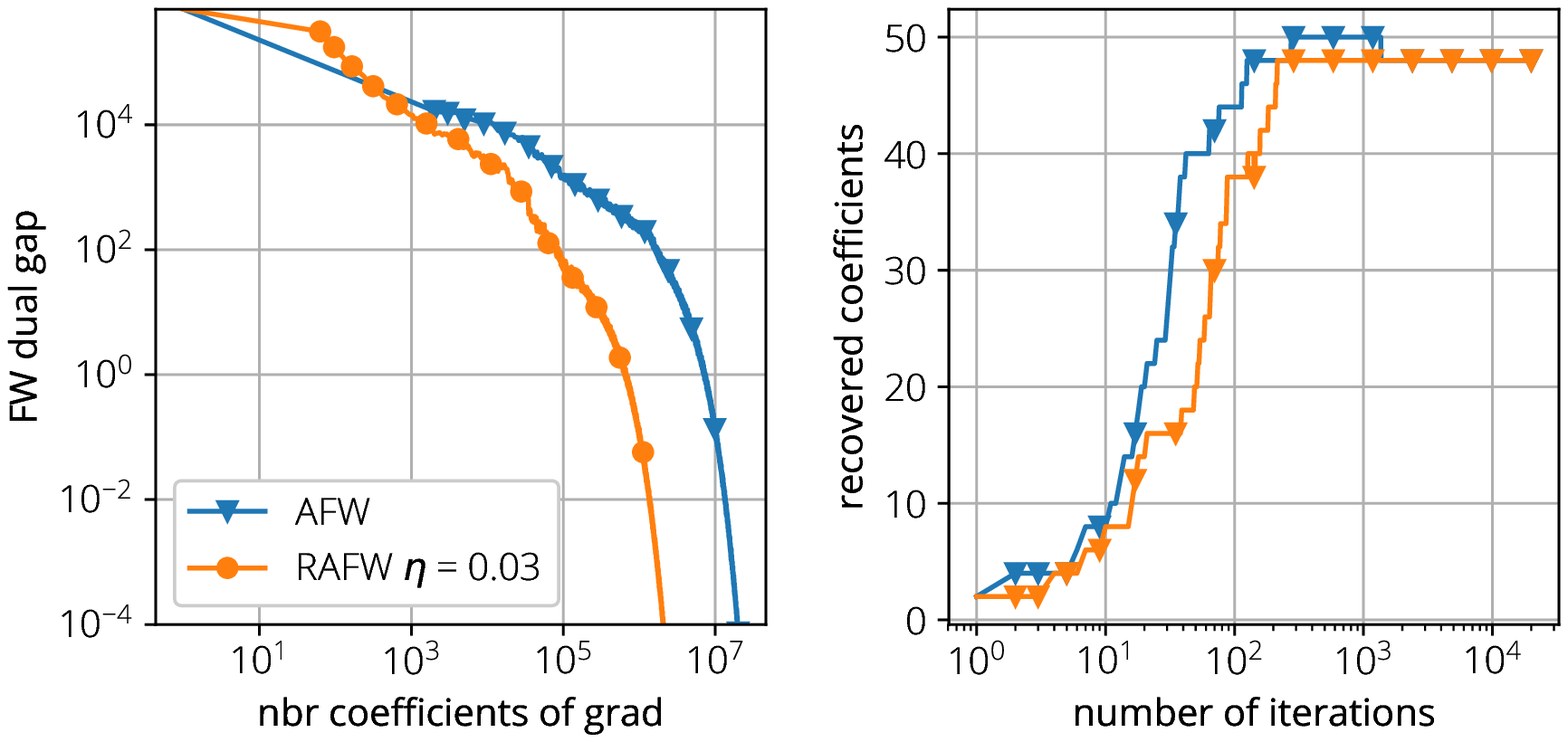}
\end{minipage}

    \caption{Same parameters and setting as in Figure \ref{fig:RFW_FW_Lasso} but to compare RAFW and AFW. AFW performed 880 away steps among which 14 were a drop steps while RAFW performed 1242 away steps and 37 drop steps.
    %add the number of drop step done by the algo?
    % RAFW 1242 away steps 37 drop steps
    % AFW  880 away steps 14 drop steps
    }
    \label{fig:RAFW_AFW_Lasso}
\end{figure}

\paragraph{RAFW experiments.} Unlike RFW, the RAFW method outperforms AFW in terms on number of iterations in the upper left graph in Figure \ref{fig:RAFW_AFW_Lasso}. %This is possible because when the RLMO proposes a not satisfying enough descent direction, there is always the possibility to opt for an away step. 
These graphs also show that both have linear rate of convergence. The bottom left graph shows that the gap between RAFW and AFW is even larger  when comparing the cumulative number of computed coefficients of the gradient required to reach a certain target precision.
%explain why at the beginning it seems to performs not better
% At the beginning of this graph, the RAFW curve is over that of the AFW (same phenomenon on Figure \ref{fig:RFW_FW_Lasso}), but is an artifact of the x-log scale, because the first iteration of AFW requires $d=200$ coefficients.

%explain why it tends to increase number of away steps but not necessarily drop steps.
This out-performance of RAFW over AFW in term of number of iteration to converge is not predicted by our convergence analysis. We conjecture that the away mechanism improves the trade-off between the cost of the LMO and the alignment of the descent direction with the opposite of the gradient. Indeed, because of the oracle subsampling, the partial FW gap (e.g. the scalar product of the Randomized FW direction with the opposite of the gradient) in RAFW is smaller than in the non randomized variant, and so there is a higher likelihood of performing an away step.  
%trade-off at stake in randomizing the LMO (

%Sparsity of the iterate
Finally, the away mechanism enables the support of the RAFW to stay close to that of AFW, which was not the case in the comparison of RFW versus FW. This is illustrated in the right panels of Figure \ref{fig:RAFW_AFW_Lasso}.

\paragraph{Real dataset.}
%\citep[Appendix E]{blondel2016polynomial}
On figure \ref{fig:real_Lasso}, we test the Lasso problem on the E2006-tf-idf data set \citep{kogan2009predicting}, which gathers volatility of stock returns from companies with financial reports. Each financial reports is then represented through its TF-IDF embedding ($n = 16087$ and $d = 8000 $ weafter an initial round of feature selection). The regularizing parameter is chosen to obtain solution with a fraction of $0.01$ nonzero coefficients.

\subsection{Latent Group-Lasso}
% The latent group Lasso 
\paragraph{Notation.}
We write $[d]$ the set of indices from $1$ to $d$. Consider $g\subset [d]$ and $\xx\in\mathbb{R}^d$, $\xx_{(g)}$ represents the projection vector of $\xx$ onto its $g$-coordinate. We use the notation $\nabla_{(g)} f(\xx_t)$ to denote the gradient with respect to the variables in group $g$. Similarly $\xx_{[g]}\in\mathbb{R}^d$ is the vector that equals $\xx$ in the coordinates of $g$ and $0$ elsewhere.

\begin{figure}
\begin{minipage}{0.99\linewidth}
\includegraphics[width=\linewidth]{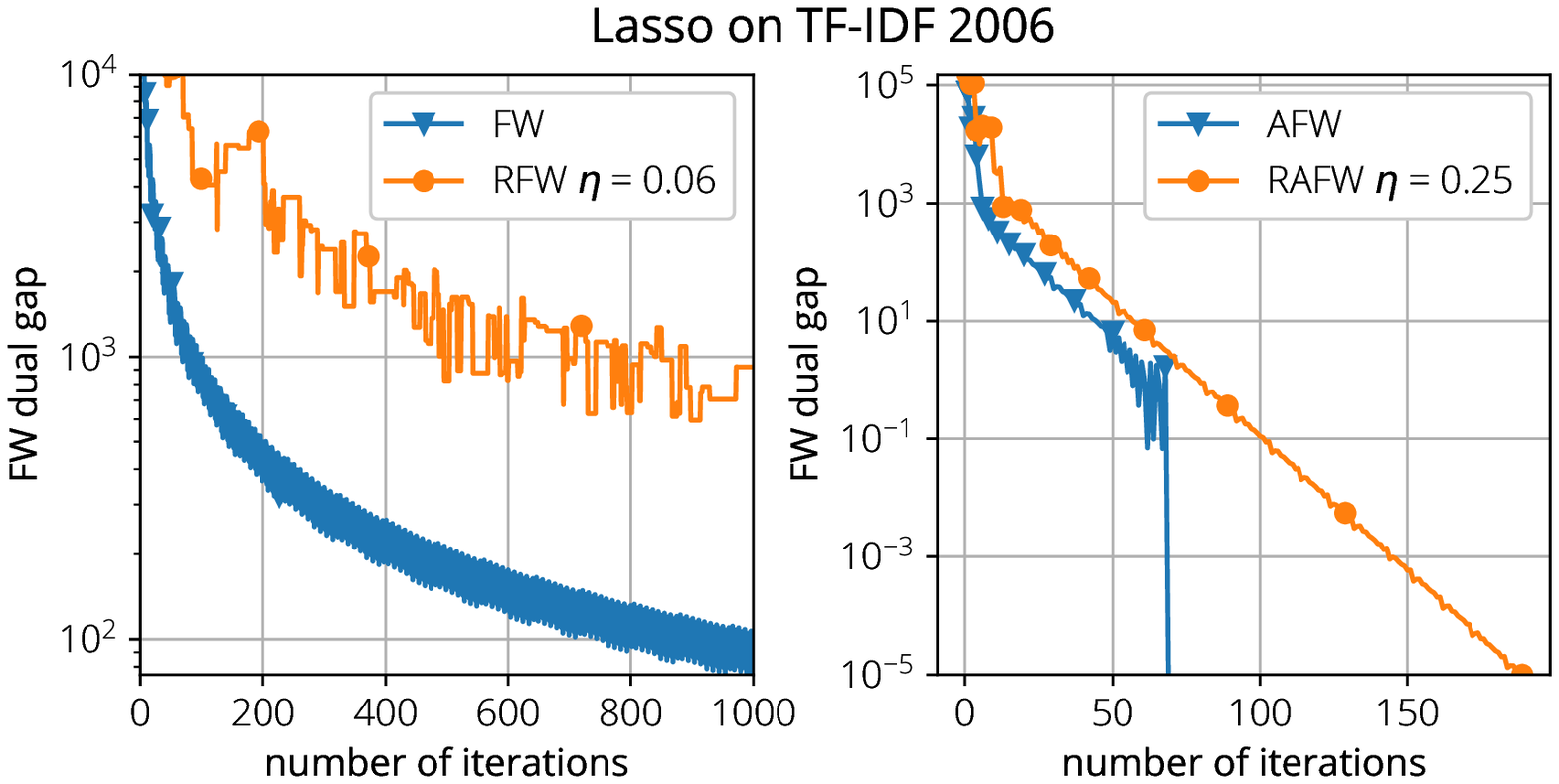}
\end{minipage}

\begin{minipage}{0.99\linewidth}
\includegraphics[width=\linewidth]{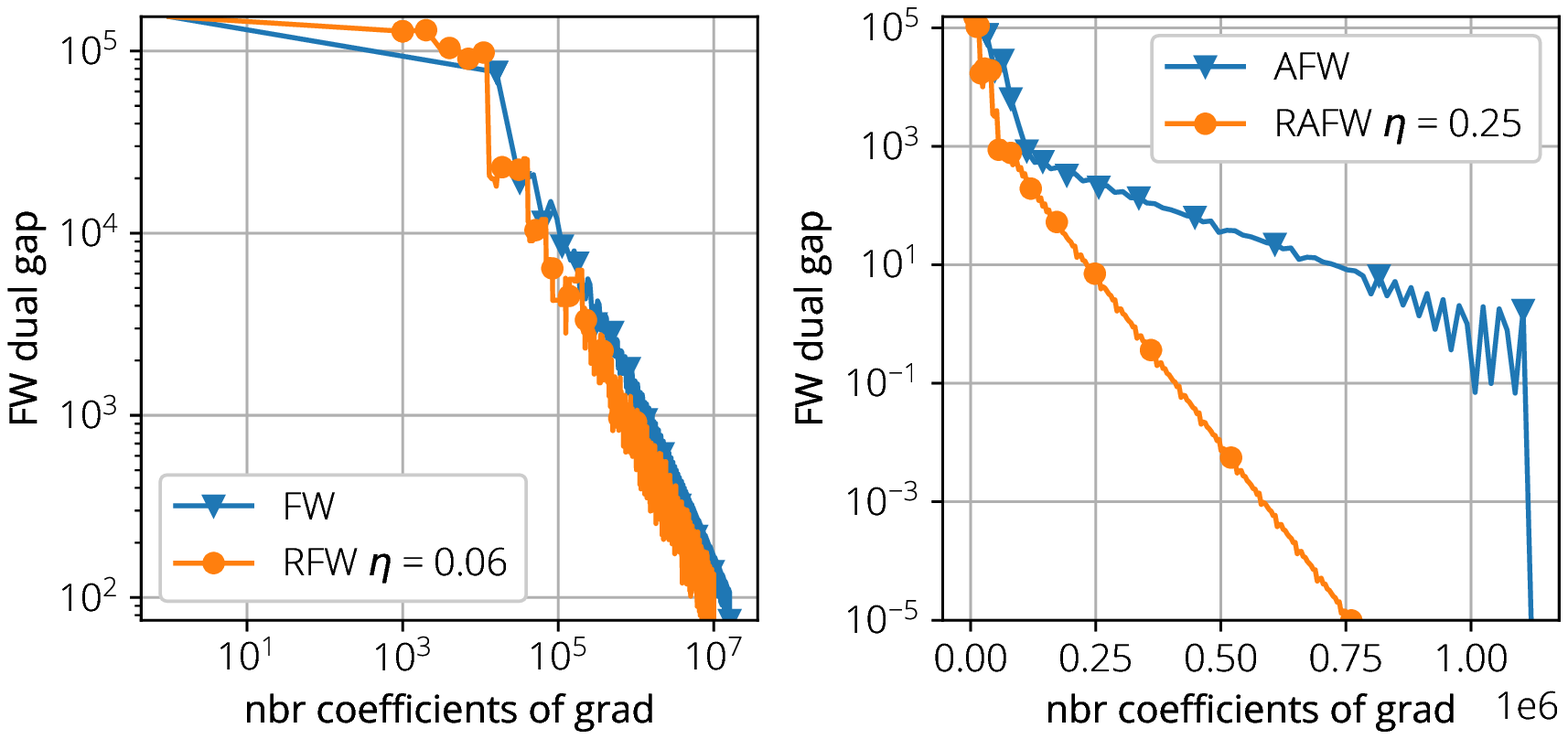}
\end{minipage}

    \caption{Performance of FW and AFW against RFW and RAFW respectively on the lasso problem with TF-IDF 2006 dataset. The subsampling parameter is $\eta =\frac{p}{|\mathcal{A}|} = 0.06$ (again chosen arbitrarily) for RFW and $\eta = 0.25$ for RAFW. {\em Right:} Comparison of RAFW against RFW. {\em Left:} Comparison of RFW against FW.
    {\em Upper:} progress in FW dual gap versus number of iterations. {\em Lower:} progress of the FW dual gap versus cumulative number of computed coefficients in gradient per call to LMO.}
    \label{fig:real_Lasso}
\end{figure}

\paragraph{Model.}
As outlined by \citet{jaggi2013revisiting}, FW algorithms are particularly useful when the domain is a ball of the latent group norm \citep{obozinski2011group}. Consider a set $\mathcal{G}$ of subset of $[d]$ such that $\bigcup_{g\in\mathcal{G}}{g}=[d]$ and denote by $||\cdot||_g$ any norm on $\mathbb{R}^{|g|}$. Frank-Wolfe can be tuned to solve \eqref{eq:obj_fun} with $\mathcal{M}$ being the ball corresponding to the latent group norm
\begin{eqnarray}
||\xx||_{\mathcal{G}}\stackrel{\text{def}}{=} \underset{\scalebox{0.9}{$s.t. ~~ \xx=\sum_{v\in\mathcal{G}}\vv_{[g]}~.$}}{\underset{v_{(g)}\in\mathbb{R}^{|g|}}{\min}     \sum_{g\in\mathcal{G}}||\vv_{(g)}||_g}
\end{eqnarray}
%This formulation was proposed by \cite{jaggi2013revisiting} and 
This formulation matches a constrained version of the regularized \citep[ equation (5)]{obozinski2011group} when each $||\cdot||_g$ is proportional to the Euclidean norm. From now on we will consider $||\cdot||_g$ to be the euclidean norm.

%Link with the group lasso
When  $\mathcal{G}$ forms a partition of $[d]$ (i.e., there is no overlap between groups), this norm coincides with the group lasso norm. %For numerical experiments we will consider $||\cdot||_g$ to be the Euclidean norm for each elements of the group $\mathcal{G}$ with same radius $\beta$.

\paragraph{Sub-sampling.}
%Work on the atomic set to get a subset A_t
Given an element $g$ of $\mathcal{G}$, consider the hyper-disk
\begin{eqnarray*}
\mathbb{D}_g(\beta)=\left\{  \vv\in\mathbb{R}^d~\mid~\vv=\vv_{[g]},||\vv_{(g)}||\leq \beta\right\}~.
\end{eqnarray*}
\citep[lemma 8]{obozinski2011group} shows that such constrain set $\mathcal{M}$ is the convex hull of $\mathcal{A}\stackrel{\text{def}}{=} \underset{g\in\mathcal{G}}{\bigcup}\mathbb{D}_g$.

At iteration $t$ of RFW for a random subset $\mathcal{G}_p$ of size $p$ of $\mathcal{G}$ we then propose to simply run RFW (algorithm \ref{algo:Randomized_FW_general}) with $\mathcal{A}_t\stackrel{\text{def}}{=}\underset{g\in\mathcal{G}_p}{\bigcup}\mathbb{D}_g$. Denoting by $g_p=\underset{g\in\mathcal{G}_p}{\bigcup}g$ the LMO in RFW becomes
\begin{eqnarray*}
\text{LMO}(\xx_t,\mathcal{A}_t)\in\argmax_{\vv \in \mathcal{A}_t}~\langle \vv_{(g_p)}, -\nabla_{(g_p)} f(\xx_t)\rangle~.
\end{eqnarray*}
This means that we only need to compute the gradient on the $g_p$ index. Depending on $\mathcal{G}$ and on the sub-sampling rate, this can be a significant computational benefit. %The euclidean norm being its own dual norm
%\begin{eqnarray*}
%\max_{\vv \in \mathcal{A}_t}~\langle \vv_{(g_p)}, -\nabla_{(g_p)} f(\xx_t)\rangle~=\beta\max_{g\in\mathcal{G}_p}||\nabla_{(g)} f(\xx_t)||~.
%\end{eqnarray*}
%Denote by $g_t\in\argmax_{g\in\mathcal{G}_p}||\nabla_{(g)} f(\xx)||$ and $\nabla f(\xx)=(c_i)_i$, it means that the left hand side is maximized for $\vv$ in $\mathbb{D}_{g_t}$ so that
%\begin{eqnarray*}
%\text{LMO}(\xx_t,\mathcal{A}_t)_i=\frac{-c_i \beta}{ ||\nabla_{(g_t)} f(\xx_t)||}~~\forall i\in g_t~.
%\end{eqnarray*}

\paragraph{Experiments.} We illustrate the convergence speed-up of using RFW over FW for latent group lasso regularized least square regression.

For $d = 10000$ we consider a collection $\mathcal{G}$ of groups of size $10$ with an overlap of $3$ and the associated atomic set $\mathcal{A}$. We chose the ground truth parameter vector $\ww_0\in\text{conv}(\mathcal{A})$ with a fraction of $0.01$ of nonzero coefficients, where on each active group, the coefficients are generated from a Gaussian distribution. The data is a set of $n$ pairs $(y_i,\ww_i)\in\mathbb{R}\times\mathbb{R}^d$ randomly generated from a Gaussian with some additive Gaussian noise. The regularizing parameter is $\beta=14$, set so that the unconstrained optimum lies outside of the constrain set. 

\paragraph{Large dataset and Streaming Model.}
The design matrix is stored in disk. We allow both RFW and FW to access it only through chunks of size $n\times 500$. This streaming model allows a wall clock comparison of the two methods on very large scale problems.%more realistic wall clock comparison of the two methods on very large scale problems.

Computing the gradient when the objective is the least squares loss consists in a matrix vector product. Computing it on a batch of coordinates then requires same operation with a smaller matrix. When computing the gradient at each randomized LMO call, the cost of slicing the design matrix can then compensate the gain in doing a smaller matrix vector product.

With data loaded in memory, which is typically the case for large datasets, both the LMO and the  randomized LMO have this access data cost. Consider also that RFW allows any scheme of sampling, including one that minimizes the cost of data retrieval.

%Indeed if the data were stored in memory, RAFW would have an extra memory-access cost in slicing the design matrix that in practice offsets the computational gain of computing the LMO on a sub-sample set of atoms. This is one reason for the appeal of block coordinate method: when the data is only access via chunk of it, a block coordinate approach do not wait for all the data information to update an iterate.

\begin{figure}[!h]

\includegraphics[width=\linewidth]{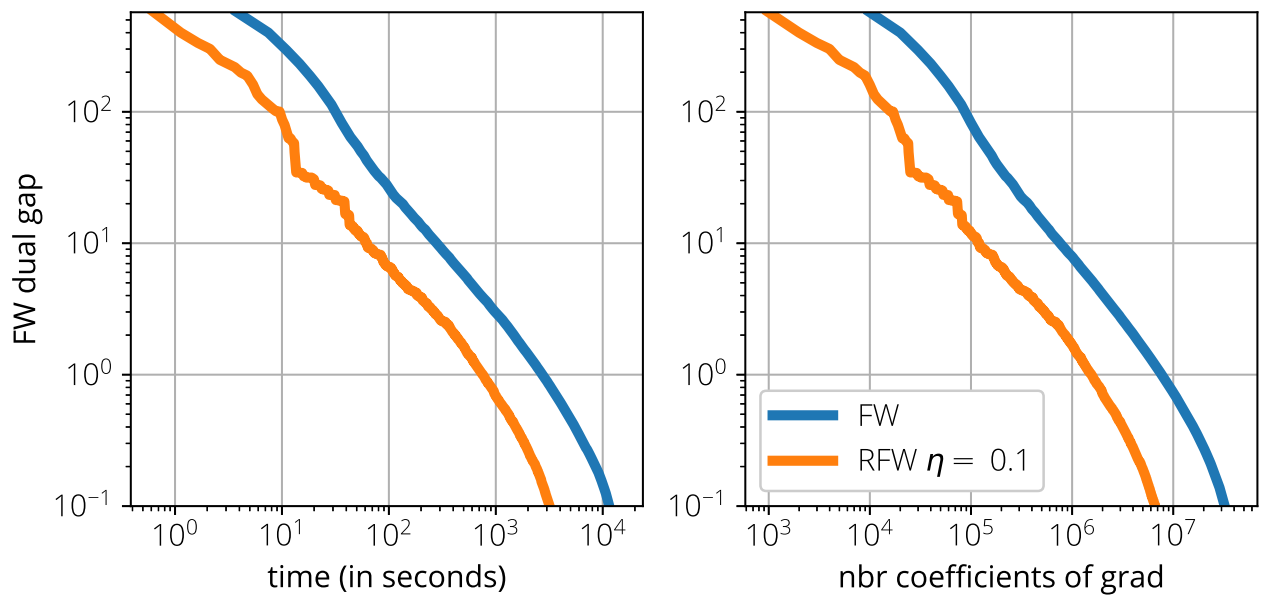}

\caption{Both panels are in log log scale and show convergence speed up for FW and RFW on latent group lasso regularized least square regression. The parameter of subsampling $\eta = 0.1$, is chosen arbitrarily. {\em Left: }evolution of the precision in FW dual gap versus the wall clock time. {\em Right:} evolution of the precision in FW dual gap versus the cumulative number of computed coefficients of the gradient. }\label{fig:RFW_vs_FW_LGL_loss}
\end{figure}

%%%%%%%%%%%%%%%%%%%%%%%%%%%%%%%%%%%%%%%%%%%%%%%%%
%%%%%%%%%%%%%%CONCLUSION%%%%%%%%%%%%%%%%%%%%%%%%%
%%%%%%%%%%%%%%%%%%%%%%%%%%%%%%%%%%%%%%%%%%%%%%%%%

\section{Conclusion and future work}
We give theoretical guarantees of convergence of randomized versions of FW that exhibit same order of convergence as their deterministic counter-parts. As far as we know, for the case of RAFW, this is the first contribution of the kind. While the theoretical complexity bounds don't necessarily imply this, our numerical experiments show that randomized versions often outperform their deterministic ones on $\ell_1$-regularized and latent group lasso regularized least squares. In both cases, randomizing the LMO allows us to compute the gradient only on a subset of its coordinates. We use it to speed up the method in a streaming model where the data is accessed by chunks, but there might be other situations where the structure of the polytope can be leveraged to make subsampling computationally beneficial.

There are also other linearly-convergent FW variants other than AFW, for which it might be possible to derive randomized variants.

Finally many recent results \citep{Goldfarb2016,goldfarb2017linear,hazan2016variance} on FW have combined various improvements of FW (away mechanism, sliding, lazy oracles, stochastic FW, etc.). Randomized oracles add to this toolbox and could further improve its benefits.

\newpage

\ifarxiv
\section*{Acknowledgements}
A.A. is at the d\'epartement d'informatique de l'ENS, \'Ecole normale sup\'erieure, UMR CNRS 8548, PSL Research University, 75005 Paris, France, and INRIA Sierra project-team. T.K. is a PhD student under the supervision of A.A. and acknowledges funding from the CFM-ENS chaire {\em les mod\`eles et sciences des donn\'ees}. 
FP is funded through the European Union's Horizon 2020 research and innovation programme under the Marie Sklodorowska-Curie grant agreement 748900. The authors would like to thanks Robert Gower, Vincent Roulet and Federico Vaggi for helpfull discussions.
\else
\fi

\bibliography{biblio}{}

\begin{thebibliography}{31}
\providecommand{\natexlab}[1]{#1}
\providecommand{\url}[1]{\texttt{#1}}
\expandafter\ifx\csname urlstyle\endcsname\relax
  \providecommand{\doi}[1]{doi: #1}\else
  \providecommand{\doi}{doi: \begingroup \urlstyle{rm}\Url}\fi

\bibitem[Beck \& Tetruashvili(2013)Beck and Tetruashvili]{beck2013convergence}
Beck, Amir and Tetruashvili, Luba.
\newblock \href{https://doi.org/10.1137/120887679}{On the convergence of block
  coordinate descent type methods}.
\newblock \emph{SIAM journal on Optimization}, 2013.

\bibitem[Blondel et~al.(2017)Blondel, Niculae, Otsuka, and Ueda]{NIPS2017_6927}
Blondel, Mathieu, Niculae, Vlad, Otsuka, Takuma, and Ueda, Naonori.
\newblock
  \href{http://papers.nips.cc/paper/6927-multi-output-polynomial-networks-and-factorization-machines.pdf}{Multi-output
  Polynomial Networks and Factorization Machines}.
\newblock In \emph{Advances in Neural Information Processing Systems 30}. 2017.

\bibitem[Braun et~al.(2017)Braun, Pokutta, and Zink]{braun2017lazifying}
Braun, G{\'a}bor, Pokutta, Sebastian, and Zink, Daniel.
\newblock
  \href{http://proceedings.mlr.press/v70/braun17a/braun17a.pdf}{Lazifying
  Conditional Gradient Algorithms}.
\newblock In \emph{Proceedings of the 34th International Conference on Machine
  Learning}, 2017.

\bibitem[Fercoq et~al.(2015)Fercoq, Gramfort, and Salmon]{fercoq2015mind}
Fercoq, Olivier, Gramfort, Alexandre, and Salmon, Joseph.
\newblock \href{http://proceedings.mlr.press/v37/fercoq15.pdf}{Mind the duality
  gap: safer rules for the Lasso}.
\newblock In \emph{Proceedings of the 32nd International Conference on Machine
  Learning}, 2015.

\bibitem[Frandi et~al.(2014)Frandi, {\~N}anculef, and
  Suykens]{frandi2014complexity}
Frandi, Emanuele, {\~N}anculef, Ricardo, and Suykens, Johan.
\newblock \href{https://arxiv.org/pdf/1410.4062.pdf}{Complexity issues and
  randomization strategies in Frank-Wolfe algorithms for machine learning}.
\newblock \emph{arXiv preprint arXiv:1410.4062}, 2014.

\bibitem[Frandi et~al.(2016)Frandi, {\~{N}}anculef, Lodi, Sartori, and
  Suykens]{Frandi2016}
Frandi, Emanuele, {\~{N}}anculef, Ricardo, Lodi, Stefano, Sartori, Claudio, and
  Suykens, Johan A.~K.
\newblock \href{https://doi.org/10.1007/s10994-016-5578-4}{Fast and scalable
  Lasso via stochastic {Frank}--{Wolfe} methods with a convergence guarantee}.
\newblock \emph{Machine Learning}, 2016.

\bibitem[Frank \& Wolfe(1956)Frank and Wolfe]{frank1956algorithm}
Frank, Marguerite and Wolfe, Philip.
\newblock \href{http://dx.doi.org/10.1002/nav.3800030109}{An algorithm for
  quadratic programming}.
\newblock \emph{Naval Research Logistics (NRL)}, 1956.

\bibitem[Garber \& Hazan(2013)Garber and Hazan]{garber2013linearly}
Garber, Dan and Hazan, Elad.
\newblock \href{https://arxiv.org/abs/1301.4666}{A linearly convergent
  conditional gradient algorithm with applications to online and stochastic
  optimization}.
\newblock \emph{arXiv preprint arXiv:1301.4666}, 2013.

\bibitem[Garber \& Hazan(2015)Garber and Hazan]{garber2015faster}
Garber, Dan and Hazan, Elad.
\newblock \href{https://arxiv.org/abs/1406.1305}{Faster rates for the
  frank-wolfe method over strongly-convex sets}.
\newblock In \emph{International Conference on Machine Learning}, 2015.

\bibitem[Goldfarb et~al.(2016)Goldfarb, Iyengar, and Zhou]{Goldfarb2016}
Goldfarb, Donald, Iyengar, Garud, and Zhou, Chaoxu.
\newblock \href{https://arxiv.org/abs/1602.01543}{Semi-Stochastic Frank-Wolfe
  Algorithms with Away-Steps for Block-Coordinate Structure Problems}, 2016.

\bibitem[Goldfarb et~al.(2017)Goldfarb, Iyengar, and Zhou]{goldfarb2017linear}
Goldfarb, Donald, Iyengar, Garud, and Zhou, Chaoxu.
\newblock \href{https://arxiv.org/abs/1703.07269}{Linear Convergence of
  Stochastic Frank Wolfe Variants}.
\newblock \emph{arXiv preprint arXiv:1703.07269}, 2017.

\bibitem[Gu{\'e}lat \& Marcotte(1986)Gu{\'e}lat and Marcotte]{guelat1986some}
Gu{\'e}lat, Jacques and Marcotte, Patrice.
\newblock \href{https://doi.org/10.1007/BF01589445}{Some comments on {Wolfe}'s
  ‘away step’}.
\newblock \emph{Mathematical Programming}, 1986.

\bibitem[Hazan \& Luo(2016)Hazan and Luo]{hazan2016variance}
Hazan, Elad and Luo, Haipeng.
\newblock \href{https://arxiv.org/abs/1602.02101}{Variance-reduced and
  projection-free stochastic optimization}.
\newblock In \emph{International Conference on Machine Learning}, pp.\
  1263--1271, 2016.

\bibitem[Jaggi(2013)]{jaggi2013revisiting}
Jaggi, Martin.
\newblock \href{http://proceedings.mlr.press/v28/jaggi13.pdf}{Revisiting
  {{Frank}}-{{Wolfe}}: {{Projection}}-{{Free Sparse Convex Optimization}}}.
\newblock In \emph{International Conference on Machine Learning}, 2013.

\bibitem[Jaggi et~al.(2010)Jaggi, Sulovsk, et~al.]{jaggi2010simple}
Jaggi, Martin, Sulovsk, Marek, et~al.
\newblock
  \href{https://pdfs.semanticscholar.org/32ea/73a18a30b48e6a46113047706d344de4706f.pdf}{A
  simple algorithm for nuclear norm regularized problems}.
\newblock In \emph{Proceedings of the 27th International Conference on Machine
  Learning}, 2010.

\bibitem[Joulin et~al.(2014)Joulin, Tang, and Fei-Fei]{joulin2014efficient}
Joulin, Armand, Tang, Kevin, and Fei-Fei, Li.
\newblock
  \href{http://ai.stanford.edu/~kdtang/papers/eccv14-vidcoloc.pdf}{Efficient
  image and video co-localization with frank-wolfe algorithm}.
\newblock In \emph{European Conference on Computer Vision}. Springer, 2014.

\bibitem[Kogan et~al.(2009)Kogan, Levin, Routledge, Sagi, and
  Smith]{kogan2009predicting}
Kogan, Shimon, Levin, Dimitry, Routledge, Bryan~R, Sagi, Jacob~S, and Smith,
  Noah~A.
\newblock \href{http://www.aclweb.org/anthology/N09-1031}{Predicting risk from
  financial reports with regression}.
\newblock In \emph{Proceedings of Human Language Technologies: The 2009 Annual
  Conference of the North American Chapter of the Association for Computational
  Linguistics}. Association for Computational Linguistics, 2009.

\bibitem[Lacoste-Julien \& Jaggi(2013)Lacoste-Julien and
  Jaggi]{lacoste2013affine}
Lacoste-Julien, Simon and Jaggi, Martin.
\newblock \href{https://arxiv.org/abs/1312.7864}{An affine invariant linear
  convergence analysis for Frank-Wolfe algorithms}.
\newblock \emph{arXiv preprint arXiv:1312.7864}, 2013.

\bibitem[Lacoste-Julien \& Jaggi(2015)Lacoste-Julien and
  Jaggi]{lacoste2015global}
Lacoste-Julien, Simon and Jaggi, Martin.
\newblock
  \href{http://papers.nips.cc/paper/5925-on-the-global-linear-convergence-of-frank-wolfe-optimization-variants.pdf}{On
  the global linear convergence of Frank-Wolfe optimization variants}.
\newblock In \emph{Advances in Neural Information Processing Systems}, 2015.

\bibitem[Lacoste-Julien et~al.(2013)Lacoste-Julien, Jaggi, Schmidt, and
  Pletscher]{lacoste2012block}
Lacoste-Julien, Simon, Jaggi, Martin, Schmidt, Mark, and Pletscher, Patrick.
\newblock
  \href{http://proceedings.mlr.press/v28/lacoste-julien13.pdf}{Block-coordinate
  Frank-Wolfe optimization for structural SVMs}.
\newblock In \emph{Proceedings of the 30th International Conference on
  International Conference on Machine Learning}, 2013.

\bibitem[Lan \& Zhou(2016)Lan and Zhou]{lan2016conditional}
Lan, Guanghui and Zhou, Yi.
\newblock
  \href{http://pwp.gatech.edu/guanghui-lan/wp-content/uploads/sites/330/2016/02/CGS08-31.pdf}{Conditional
  gradient sliding for convex optimization}.
\newblock \emph{SIAM Journal on Optimization}, 2016.

\bibitem[Lan et~al.(2017)Lan, Pokutta, Zhou, and Zink]{lan2017conditional}
Lan, Guanghui, Pokutta, Sebastian, Zhou, Yi, and Zink, Daniel.
\newblock \href{http://proceedings.mlr.press/v70/lan17a/lan17a.pdf}{Conditional
  Accelerated Lazy Stochastic Gradient Descent}.
\newblock In \emph{Proceedings of the 34th International Conference on Machine
  Learning}, 2017.

\bibitem[Locatello et~al.(2017)Locatello, Khanna, Tschannen, and
  Jaggi]{locatello17a}
Locatello, Francesco, Khanna, Rajiv, Tschannen, Michael, and Jaggi, Martin.
\newblock
  \href{http://proceedings.mlr.press/v54/locatello17a/locatello17a.pdf}{A
  Unified Optimization View on Generalized Matching Pursuit and Frank-Wolfe}.
\newblock In \emph{Proceedings of the 20th International Conference on
  Artificial Intelligence and Statistics}, 2017.

\bibitem[Obozinski et~al.(2011)Obozinski, Jacob, and Vert]{obozinski2011group}
Obozinski, Guillaume, Jacob, Laurent, and Vert, Jean-Philippe.
\newblock \href{https://arxiv.org/abs/1110.0413}{Group lasso with overlaps: the
  latent group lasso approach}.
\newblock \emph{arXiv preprint arXiv:1110.0413}, 2011.

\bibitem[Ping et~al.(2016)Ping, Liu, and Ihler]{ping2016learning}
Ping, Wei, Liu, Qiang, and Ihler, Alexander~T.
\newblock \href{https://arxiv.org/abs/1710.05270}{Learning Infinite {RBM}s with
  Frank-Wolfe}.
\newblock In \emph{Advances in Neural Information Processing Systems}, 2016.

\bibitem[Richt{\'a}rik \& Tak{\'a}{\v{c}}(2014)Richt{\'a}rik and
  Tak{\'a}{\v{c}}]{richtarik2014iteration}
Richt{\'a}rik, Peter and Tak{\'a}{\v{c}}, Martin.
\newblock \href{https://arxiv.org/abs/1107.2848}{Iteration complexity of
  randomized block-coordinate descent methods for minimizing a composite
  function}.
\newblock \emph{Mathematical Programming}, 2014.

\bibitem[Tibshirani et~al.(2012)Tibshirani, Bien, Friedman, Hastie, Simon,
  Taylor, and Tibshirani]{strongrules}
Tibshirani, Robert, Bien, Jacob, Friedman, Jerome, Hastie, Trevor, Simon, Noah,
  Taylor, Jonathan, and Tibshirani, Ryan~J.
\newblock \href{http://www.jstor.org/stable/41430939}{Strong rules for
  discarding predictors in lasso-type problems}.
\newblock \emph{Journal of the Royal Statistical Society. Series B (Statistical
  Methodology)}, 2012.

\bibitem[Vinyes \& Obozinski(2017)Vinyes and Obozinski]{vinyes2017fast}
Vinyes, Marina and Obozinski, Guillaume.
\newblock \href{http://proceedings.mlr.press/v54/vinyes17a/vinyes17a.pdf}{Fast
  column generation for atomic norm regularization}.
\newblock In \emph{Proceedings of the 20th International Conference on
  Artificial Intelligence and Statistics}, 2017.

\bibitem[Vogelstein et~al.(2015)Vogelstein, Conroy, Lyzinski, Podrazik,
  Kratzer, Harley, Fishkind, Vogelstein, and Priebe]{vogelstein2015fast}
Vogelstein, Joshua~T, Conroy, John~M, Lyzinski, Vince, Podrazik, Louis~J,
  Kratzer, Steven~G, Harley, Eric~T, Fishkind, Donniell~E, Vogelstein, R~Jacob,
  and Priebe, Carey~E.
\newblock
  \href{https://pdfs.semanticscholar.org/de6f/9eb6762046bde56edf788be65f1858e33396.pdf}{Fast
  approximate quadratic programming for graph matching}.
\newblock \emph{PLOS one}, 2015.

\bibitem[Wang \& Lin(2014)Wang and Lin]{wang2014iteration}
Wang, Po-Wei and Lin, Chih-Jen.
\newblock
  \href{http://www.jmlr.org/papers/volume15/wang14a/wang14a.pdf}{Iteration
  complexity of feasible descent methods for convex optimization}.
\newblock \emph{Journal of Machine Learning Research}, 2014.

\bibitem[Zaslavskiy et~al.(2009)Zaslavskiy, Bach, and Vert]{zaslavskiy2009path}
Zaslavskiy, Mikhail, Bach, Francis, and Vert, Jean-Philippe.
\newblock \href{https://doi.org/10.1109/TPAMI.2008.245}{A path following
  algorithm for the graph matching problem}.
\newblock \emph{IEEE Transactions on Pattern Analysis and Machine
  Intelligence}, 2009.

\end{thebibliography}
\bibliographystyle{icml2018}

\clearpage

\appendix
\onecolumn
\gdef\thesection{Appendix \Alph{section}}

\section*{\\\centering\LARGE How to Get Away with Subsampling: a Frank-Wolfe\\
Algorithm for Optimizing over Large Atomic Domains\\
\hfill\\
\centering{Supplementary material}}

\vspace{1em}
%%%%%%%%%%%%%%%%%%%%%%%%%%%%%%%%%%%%%%%%%%%%%%%%%%%%%%%%%%%%%%%%%%%%%%%%%%%%%%%%%%%%%%%%%%%%%%%%%%%%%%%%%%%%%%%%%%%%%%%%%%%%%%%%%%%%%%%%%%%%%%%%%%%
%%%%%%%%%%%%%%%%%%%%%%%%%%%%%%%%%%%%SUB-LINEAR PROOF%%%%%%%%%%%%%%%%%%%%%%%%%%%%%%%%%%%%%%%%%%%%%%%%%%%%%%%%%%%%%%%%%%%%%%%%%%%%%%%%%%%%%%%%%%%
%%%%%%%%%%%%%%%%%%%%%%%%%%%%%%%%%%%%%%%%%%%%%%%%%%%%%%%%%%%%%%%%%%%%%%%%%%%%%%%%%%%%%%%%%%%%%%%%%%%%%%%%%%%%%%%%%%%%%%%%%%%%%%%%%%%%%%%%%%%%%%%%%%%
\paragraph{Appendix notations.}  We denote by $\mathbf{E}_t$ the conditional expectation at iteration $t$, conditioned on all the past and by $\mathbb{E}$ a full expectation. We denote by a tilde the values that come from the deterministic analysis of FW. Denote by $\rr_t=-\nabla f(\xx_t)$.  For $k\in\mathbb{N}^*$, denote by $[k]$ all integer between $1$ and $k$.

\section{Proof of sub-linear convergence for Randomized Frank-Wolfe}\label{apx:rfw_proof}

In this section we provide a convergence proof for Algorithm~\ref{algo:Randomized_FW_general}. The proof is loosely inspired by that of~\citep[Appendix B.1]{locatello17a}, with the obvious difference that the result of the LMO is a random variable in our case.

\begin{thmbis}{th:expectation_rate}
Let $f$ be a function with bounded curvature constant $C_f$,
Algorithm \ref{algo:Randomized_FW_general} for $\eta\in(0,1]$, (with step-size chosen by either variants) converges towards a solution of~\eqref{eq:obj_fun}, satisfying
\begin{equation}
\mathbb{E}(f(\xx_{T}))-f(\xx^*)\leq \frac{2(C_f+f(\xx_0)-f(\xx^*))}{\eta T+2}.
\end{equation}
\end{thmbis}

\begin{proof}
By definition of the curvature constant, at iteration $t$ we have 
\begin{equation}\label{eq:curvature_1}
    f(\xx_{t} + \gamma (\ss_t - \xx_t)) \leq f(\xx_t) + \gamma\langle \nabla f(\xx_t), \ss_t - \xx_t\rangle + \frac{\gamma^2}{2} C_f~.
\end{equation}
By minimizing with respect to $\gamma$ on $[0,1]$ we obtain 
\begin{equation}\gamma_t =\text{clip}_{[0,1]} \langle -\nabla f(\xx_t), \ss_t - \xx_t\rangle / C_f\quad,
\end{equation}
which is the definition of $\gamma_t$ in the algorithm with Variant 2. Hence, we have 
\begin{eqnarray*}
f(\xx_{t+1})&\leq & f(\xx_t) + \min_{\gamma \in [0, 1]}\left\{ \gamma\langle \nabla f(\xx_t), \ss_t - \xx_t\rangle + \frac{\gamma^2}{2} C_f\right\},
\end{eqnarray*}
an inequality which is also valid for Variant 1 since by the line search procedure the objective function at $\xx_{t+1}$ is always equal or smaller than that of Variant 1.
% \begin{eqnarray*}
% f(\xx_{t+1})=\min_{\gamma\in[0,1]} f(\xx_{t} + \gamma (\ss_t - \xx_t))\\
% \leq  f(\xx_t) + \min_{\gamma \in [0, 1]}\left\{ \gamma\langle \nabla f(\xx_t), \ss_t - \xx_t\rangle + \frac{\gamma^2}{2} C_f\right\}
% \end{eqnarray*}
Denote by $h_t = f(\xx_t) - f(\xx^*)$,% for $\xx_{t+1} = \xx_{t} + \gamma_t (\ss_t - \xx_t)$ using the aforementioned step-size we have from Eq.~\eqref{eq:curvature_1}
\begin{equation*}
    h_{t+1} \leq h_t + \min_{\gamma \in [0, 1]}\left\{ \gamma\langle \nabla f(\xx_t), \ss_t - \xx_t\rangle + \frac{\gamma^2}{2} C_f\right\}.
\end{equation*}
We write $\widetilde{\ss}_t$ the FW atom if we had started the FW algorithm at $\xx_t$, and $\Econd_t$ the expectation conditionned on all the past until $\xx_t$, we have
\begin{align}
\Econd_t h_{t+1} &\leq h_t + \Econd_t \min_{\gamma \in [0, 1]}\left\{ \gamma\langle \nabla f(\xx_t), \ss_t - \xx_t\rangle + \frac{\gamma^2}{2} C_f\right\}\\
&\leq h_t + \mathcal{P}(\ss_t = \widetilde{\ss}_t) \min_{\gamma \in [0, 1]}\Big\{\gamma\langle \nabla f(\xx_t), \widetilde{\ss}_t - \xx_t\rangle + \frac{\gamma^2}{2} C_f\Big\}\label{eq:before_FW_dual}\\
&\leq h_t + \eta \min_{\gamma \in [0, 1]}\Big\{-\gamma h(\xx_t) + \frac{\gamma^2}{2} C_f\Big\}~\\
&\leq h_t + \eta \big(-\gamma h(\xx_t) + \frac{\gamma^2}{2} C_f\big)\quad\text{ (for any $\gamma \in [0, 1]$, by definition of $\min$)} \label{eq:for_induction}~,
\end{align}
where the second inequality follows from the definition of expectation and the fact that minimum is non-positive since it is zero for $\gamma=0$. 
%Also to go from \eqref{eq:before_FW_dual} to \eqref{eq:for_induction} uses that the FW gap is an upper bound on the dual gap, e.g. $ \langle -\nabla f(\xx_t), \widetilde{\ss}_t - \xx_t\rangle \geq h(\xx_t)$.
The last inequality is a consequence of uniform sampling as well as it uses that the FW gap is an upper bound on the dual gap, e.g. $ \langle -\nabla f(\xx_t), \widetilde{\ss}_t - \xx_t\rangle \geq h(\xx_t)$.

\textbf{Induction.} From \eqref{eq:for_induction} the following is true for any $\gamma\in[0,1]$
\begin{eqnarray}\label{eq:diff_with_frandi}
\Econd_t(h_{t+1})\leq h_t (1-\eta\gamma)+\frac{\gamma^2}{2}\eta C_f~.
\end{eqnarray}
Taking unconditional expectation and writing $H_t=\mathbb{E}(h_t)$, we get for any $\gamma\in[0,1]$
\begin{eqnarray}\label{eq:final_eq_ref}
H_{t+1}\leq H_t (1-\eta\gamma)+\frac{\gamma^2}{2}\eta C_f.
\end{eqnarray}
With $\gamma_t=\frac{2}{\eta t+2}\in[0,1]$, we get by induction
\begin{eqnarray}\label{eq:result_induction_RFW}
H_t\leq 2\frac{C_f+\epsilon_0}{\eta t+2}=\gamma_t (C_f+\epsilon_0),
\end{eqnarray}
where $\epsilon_0=f(x_0)-f(x^*)$. 
Initialization follows the fact that the curvature constant is positive. For $t>0$, from \eqref{eq:final_eq_ref} and the induction hypothesis
\begin{eqnarray*}
H_{t+1}&\leq& \gamma_t(C_f+\epsilon_0)(1-\eta\gamma_t)+\frac{\gamma_t^2}{2}\eta C_f\\
&\leq& \gamma_t(C_f+\epsilon_0)(1-\eta\gamma_t)+\frac{\gamma_t^2}{2}\eta (C_f+\epsilon_0)\\
&\leq& \gamma_t(C_f+\epsilon_0)(1-\eta\gamma_t+\frac{\gamma_t}{2}\eta)\\
&\leq& (C_f+\epsilon_0)(1-\frac{\gamma_t}{2}\eta)\gamma_t\\
&\leq& (C_f+\epsilon_0)\gamma_{t+1}.
\end{eqnarray*}
The last inequality comes from the fact that $(1-\frac{\gamma_t}{2}\eta)\gamma_t\leq \gamma_{t+1}$. Indeed, with $\gamma_t =\frac{2}{\eta t+2}$, it is equivalent to
\begin{eqnarray*}
(1-\frac{\eta}{\eta t+2})\frac{2}{\eta t+2}&\leq& \frac{2}{\eta (t+1)+2}\\
\Leftrightarrow \frac{(\eta t+2)-\eta}{\eta t+2}&\leq& \frac{\eta t+2}{\eta (t+1)+2}\\
\Leftrightarrow (\eta t+2-\eta)(\eta (t+1)+2)&\leq&  (\eta t+2)^2\\
\Leftrightarrow \eta^2t^2+4\eta t+4-\eta^2&\leq& \eta^2t^2+4\eta t +4.
\end{eqnarray*}
The last being true, it concludes the proof.
\end{proof}

 \clearpage
 \section{Proof of linear convergence for RAFW}\label{apx:linear_proof}

\paragraph{Away curvature and geometric strong convexity}. 
The \emph{away curvature} constant is a modification of the curvature constant described in the previous subsection, in which the FW direction $\ss - \xx$ is replaced with an arbitrary direction $\ss - \vv$:
\begin{equation*}
C_f^A\defas\!\!\!\!\!\!\!\! \underset{\substack{\xx,\ss,\vv\in\mathcal{M}\\\gamma\in [0,1] \\ \yy=\xx+\gamma (\ss-\vv)}}{\text{sup }}\!\!{\frac{2}{\gamma^2}\big( f(\yy)-f(\xx)-\gamma\langle\nabla f(\xx),\ss-\vv\rangle \big)}~.
\end{equation*}
The \emph{geometric strong convexity} constant $\mu_f$ depends on both the function and the domain (in contrast to the standard strong convexity definition)
 and is defined as (see ``An Affine Invariant Notion of Strong Convexity'' in \citep{lacoste2015global} for more details)
\begin{equation*}
  \mu^A_f =\!\!\inf_{\xx \in \mathcal{M}} \!\!\!\!\!\!\underset{\substack{\xx^*\in \mathcal{M} \\ \langle \nabla f(\xx), \xx^* - \xx\rangle < 0}}{\inf}  \frac{2}{\gamma^A(\xx, \xx^*)^2}B_f(\xx, \xx^*)
\end{equation*}
where $B_f(\xx, \xx^*) = f(\xx^*) - f(\xx) - \langle \nabla f(\xx), \xx^* - \xx\rangle$ and $\gamma^A(\xx, \xx^*)$ the positive step-size quantity:
\begin{eqnarray*}
\gamma^A(\xx, \xx^*) \coloneqq \frac{\langle -\nabla f(\xx), \xx^*-\xx \rangle}{\langle -\nabla f(\xx), \ss_f(\xx)-\vv_f(\xx)\rangle}.
\end{eqnarray*}
In particular $\ss_f(\xx)$ is the Frank Wolfe atom starting from $\xx$. $\vv_f(\xx)$ is the away atom when considering all possible expansions of $\xx$ as a convex combinations of atoms in $\mathcal{A}$. Denote by $\mathcal{S}_{\xx} \coloneqq \{\mathcal{S}\mid \mathcal{S}\subseteq\mathcal{A}\text{ such that $\xx$ is a proper convex combination of all elements in $\mathcal{S}$}\}$ and by $\vv_{\mathcal{S}(\xx)} \coloneqq \argmax_{\vv\in\mathcal{S}}\langle\nabla f(\xx),\vv\rangle$. $\vv_f(\xx)$ is finally defined by
\begin{eqnarray*}
\vv_f(\xx)\defas \underset{\{\vv=\vv_{\mathcal{S}}\mid \mathcal{S}\in\mathcal{S}_{\xx}\}}{\argmin} \langle\nabla f(\xx),\vv\rangle~.
\end{eqnarray*}

Similarly following \citep[Lemma 9 in Appendix F]{lacoste2015global}, the geometric $\tilde{\mu}$-generally-strongly-convex constant is defined as
\begin{equation*}
  \tilde\mu_f =\!\!\inf_{\xx \in \mathcal{M}} \!\!\!\!\!\!\underset{\substack{\xx^*\in \scalebox{1}{$\chi$}^* \\ \langle \nabla f(\xx), \xx^* - \xx\rangle < 0}}{\inf}  \frac{1}{2\gamma^A(\xx, \xx^*)^2}\big( f(\xx^*)-f(\xx)-2\langle\nabla f(\xx),\xx^*-\xx\rangle \big)~,
\end{equation*}
where $\scalebox{1}{$\chi$}^*$ represents the solution set of \eqref{eq:obj_fun}.

\paragraph{Notations.} In the context of RAFW, $\mathcal{A}$ denotes the finite set of extremes atoms such that $\mathcal{M}=\text{Conv}(\mathcal{A})$. At iteration $t$, $\mathcal{A}_t$ is a random subset of element of $\mathcal{A}\setminus\mathcal{S}_t$ where $\mathcal{S}_t$ is the current support of the iterate. The Randomized LMO is performed over $\mathcal{V}_t= \mathcal{S}_t\cup\mathcal{A}_t$ so that for Algorithm \ref{algo:Randomized_away_general}, $\ss_t\stackrel{\text{def}}{\in} \argmax_{\vv\in\mathcal{V}_t} \langle - \nabla f(\xx_t), \vv \rangle$ is the FW atom at iteration $t$ for RAFW.

Note that when $|\mathcal{A}\setminus\mathcal{S}_t| \leq p$, Algorithm \ref{algo:Randomized_away_general} does exactly the same as AFW. For the sake of simplicity we will consider that this is not the case. Indeed we would otherwise fall back into the deterministic setting and the proof would just be that of \cite{lacoste2015global}. 

%For readability we will repeat several time the definition of the main variables. From Algorithm \ref{algo:Randomized_away_general}, $\ss_t\stackrel{\text{def}}{=} \argmax_{v\in\mathcal{V}_t} \langle - \nabla f(\xx_t), v - \xx_t\rangle$ is the FW atom at iteration $t$ for RAFW. $\mathcal{V}_t$ is defined in Algorithm \ref{algo:Randomized_away_general} as the union of the current support $\mathcal{S}_t$ of $\xx_t$ with a random subset of size $p$ of atom taken in $\mathcal{A}\setminus\mathcal{S}_t$. Note that when $|\mathcal{V}\setminus\mathcal{S}_t|\leq p$, Algorithm \ref{algo:Randomized_away_general} does exactly the same as AFW. 

We use tilde notation for quantities that are specific to the deterministic FW setting. For instance, $\widetilde\ss_t\stackrel{\text{def}}{\in} \argmax_{\vv\in\mathcal{A}} \langle - \nabla f(\xx_t), \vv \rangle$ is the FW atom for AFW starting at $\xx_t$.

Similarly the Away atom is such that $\vv_t\stackrel{\text{def}}{\in} \argmin_{v\in\mathcal{S}_t} \langle - \nabla f(\xx_t), \vv\rangle$ and it does not depend on the sub-sampling at iteration $t$. Here we do not use any tilde because it is a quantity that appears both in AFW and its Randomized counter-part.

In AFW, $\widetilde{g}_t \defas \langle -\nabla f(\xx_t),\widetilde\ss_t-\vv_t\rangle =\text{max}_{\ss\in\mathcal{A}}\langle -\nabla f(\xx_t),\ss-\vv_t\rangle $ is an upper-bound of the dual gap, named the \textit{pair-wise dual gap} \citep{lacoste2015global}. We consider the corresponding \textit{partial pair-wise dual gap} $\widetilde{g}_t \defas {\langle -\nabla f(\xx_t),\ss_t-\vv_t\rangle} =\underset{\ss\in\mathcal{V}_t}{\text{max }}\langle -\nabla f(\xx_t),\ss-\vv_t\rangle $. It is partial is the sense that the maximum is computed on a subset $\mathcal{V}_t$ of $\mathcal{A}$ which results in the fact that it is not guaranteed anymore to be an upper-bound on the dual-gap.

\paragraph{Structure of the proof.}
The proof is structured around a main part that uses Lemmas \ref{lem:progress_direction_sub_away} and \ref{lemma:proba_conditional_non_drop_step}. Lemma \ref{lemma:proba_worth_case_scenario} is just used to prove Lemma \ref{lemma:proba_conditional_non_drop_step}. 

The main proof follows the scheme of the deterministic one of AFW in \citep[Theorem 8]{lacoste2015global}. It is divided in three parts. The first part consists in upper bounding $h_t\stackrel{\text{def}}{=} f(\xx_t)-f(\xx^*)$ with $\tilde g_t$. It does not depend on the specific construction of the iterates $\xx_t$ and thus remains the same as that in \cite{lacoste2015global}. The second part provides a lower bound on the progress on the algorithm, namely
\begin{eqnarray}
h_{t+1}\leq (1-\rho_f\big(\frac{g_t}{\widetilde{g}_t}\big)^2) h_t,
\end{eqnarray}
with $\rho_f=\frac{\mu_f^A}{4C_f^A}$, when it is not doing a \textit{bad drop step} (defined above). As a proxy for this event, we use the binary variable $z_t$ that equals $0$ for bad drop steps and $1$ otherwise. 

The difficulty lies in that we guarantee a geometrical decrease only when $g_t=\widetilde{g}_t$ and $z_t=1$. Because of the sub-sampling and unlike in the deterministic setting, $z_t$ is a random variable. Lemma \ref{lemma:proba_conditional_non_drop_step} provides a lower bound on the probability of interest, $\mathcal{P}(\widetilde g_t = g_t ~\mid~z_t=1)$, for the last part of the main proof.

Finally, the last part of the proof constructs a bound on the number of times we can expect both $z_t=1$ and $g_t=\widetilde{g}_t$ subject to the constraint that at least half of the iterates satisfy $z_t=1$. It is done by recurrence.

\subsection{Lemmas}
%%%%%%%%%%%%%%%%%%%%%%%%%%%%%%%%%%%%%%%%%%%%%%%%%%%%%%%%%%%%%%%%%
%%%%%%%%%%Lemma to understand how the direction is chosen%%%%%%%
%%%%%%%%%%%%%%%%%%%%%%%%%%%%%%%%%%%%%%%%%%%%%%%%%%%%%%%%%%%%%%%%%
%lemma that gives an understanding of how well the chosen direction is good
This lemma ensures the chosen direction $\dd_t$ in RAFW is a good descent direction, and links it with $g_t$ which may be equal to $\widetilde{g}_t$.
\begin{lemma}\label{lem:progress_direction_sub_away}
Let $\ss_t, \vv_t$ and $\dd_{t}$ be as defined in Algorithm \ref{algo:Randomized_away_general}.
Then for 
$g_{t} \stackrel{\text{def}}{=}\langle - \nabla f(\xx_t), \ss_t - \vv_t\rangle$, we have
\begin{eqnarray}\label{eq:quantify_direction}
\langle - \nabla f(\xx_t),\dd_{t}\rangle \geq \frac{1}{2} g_{t} \geq 0~.
\end{eqnarray}
\end{lemma}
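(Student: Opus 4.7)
The plan is to unpack the choice rule in Lines~\ref{line:decision_step}--\ref{l:gammamax2} of Algorithm~\ref{algo:Randomized_away_general} and use an elementary $\max \geq \mathrm{avg}$ bound. Concretely, I would first rewrite
\[
g_t=\langle-\nabla f(\xx_t),\ss_t-\vv_t\rangle
 =\langle-\nabla f(\xx_t),\ss_t-\xx_t\rangle+\langle-\nabla f(\xx_t),\xx_t-\vv_t\rangle
 =\langle-\nabla f(\xx_t),\dd_t^{\mathrm{FW}}\rangle+\langle-\nabla f(\xx_t),\dd_t^{\mathrm{A}}\rangle ,
\]
simply splitting the difference $\ss_t-\vv_t$ through the point $\xx_t$.

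Next I would recall that in Line~\ref{line:decision_step} the algorithm sets $\dd_t$ to be whichever of $\dd_t^{\mathrm{FW}}$ and $\dd_t^{\mathrm{A}}$ achieves the larger inner product with $-\nabla f(\xx_t)$; consequently
\[
\langle-\nabla f(\xx_t),\dd_t\rangle
 =\max\bigl\{\langle-\nabla f(\xx_t),\dd_t^{\mathrm{FW}}\rangle,\langle-\nabla f(\xx_t),\dd_t^{\mathrm{A}}\rangle\bigr\}
 \geq \tfrac12\bigl(\langle-\nabla f(\xx_t),\dd_t^{\mathrm{FW}}\rangle+\langle-\nabla f(\xx_t),\dd_t^{\mathrm{A}}\rangle\bigr)
 =\tfrac12 g_t ,
\]
which yields the first inequality of \eqref{eq:quantify_direction}.

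For the second inequality, i.e.\ $g_t\geq 0$, I would use the definitions of the two oracle calls. Since $\ss_t$ maximizes $\langle-\nabla f(\xx_t),\cdot\rangle$ over $\mathcal{S}_t\cup\mathcal{A}_t$ and $\vv_t\in\mathcal{S}_t\subseteq\mathcal{S}_t\cup\mathcal{A}_t$, we have $\langle-\nabla f(\xx_t),\ss_t\rangle\geq\langle-\nabla f(\xx_t),\vv_t\rangle$, hence $g_t\geq 0$. There is no real obstacle here: the statement is structurally the same as the deterministic AFW progress lemma of \citet{lacoste2015global}, and the fact that $\ss_t$ is produced by a \emph{subsampled} LMO is irrelevant, since the key containment $\vv_t\in\mathcal{S}_t\cup\mathcal{A}_t$ (inherited from the inclusion $\mathcal{S}_t\subseteq\mathcal{S}_t\cup\mathcal{A}_t$ enforced in Line~\ref{line:rafw_subsampled_lmo}) is all that is needed.
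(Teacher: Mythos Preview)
Your proof is correct and essentially identical to the paper's own argument: both use that $\dd_t$ attains the larger of the two inner products, so $2\langle-\nabla f(\xx_t),\dd_t\rangle\geq\langle-\nabla f(\xx_t),\dd_t^{\mathrm{FW}}\rangle+\langle-\nabla f(\xx_t),\dd_t^{\mathrm{A}}\rangle=g_t$, and both derive $g_t\geq 0$ from the containment $\vv_t\in\mathcal{S}_t\subseteq\mathcal{S}_t\cup\mathcal{A}_t$ over which $\ss_t$ is optimal. Your write-up is slightly more explicit (inserting $\xx_t$ and stating the $\max\geq\text{average}$ step), but the content is the same.
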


\begin{proof}
The first inequality appeared already in the convergence proof of \citet[Eq. (6)]{lacoste2015global}, which we repeat here for completeness. By the definition of $\dd_{t}$ we have:
\begin{align}
2\langle - \nabla f(\xx_t),\dd_{t}\rangle &\geq  \langle - \nabla f(\xx_t), \dd_{t}^{A}\rangle + \langle - \nabla f(\xx_t), \dd_{t}^{\text{FW}}\rangle\nonumber\\
&=  \langle - \nabla f(\xx_t), \ss_{t}-\vv_{t}\rangle = g_{t}
\end{align}
We only need to prove that $g_{t}$ is non-negative. In line \ref{line:rafw_subsampled_lmo} of algorithm \ref{algo:Randomized_away_general}, $\ss_t$ is the output of LMO performs of the set of atoms  $\mathcal{S}_t\cup \mathcal{A}_t \defas \mathcal{V}_t$,
\begin{eqnarray*}
\ss_{t}=\argmax_{\ss\in\mathcal{V}_t}\langle- \nabla f(\xx_t),\ss\rangle~,
\end{eqnarray*}
so that we have $\langle- \nabla f(\xx_t),\ss_{t}\rangle \geq \langle - \nabla f(\xx_t),\vv_{t}\rangle$. By definition of $g_t$, it implies $g_{t}\geq 0$~.
\end{proof}

%Lemma \ref{lemma:proba_worth_case_scenario} is an auxiliary result used in lemma \ref{lemma:lower_bounding_proba} which is itself an auxiliary result for lemma \ref{lemma:proba_conditional_non_drop_step}.
Lemma \ref{lemma:proba_worth_case_scenario} is just a simple combinatorial result needed in Lemma \ref{lemma:proba_conditional_non_drop_step}. Consider a sequence of $m$ numbers, we lower bound the probability for the maximum of a subset of size greater than $p$ to be equal to the maximum of the sequence.

%B2
%%%%%%%%%%%%%%%%%%%%%%%%%%%%%%%%%%%%%%%%%%%%%%%%%%%%%%%%%%%%%%%%%
%%%%%%%%%%Lemma explaining worth case scenario for proba%%%%%%%%%
%%%%%%%%%%%%%%%%%%%%%%%%%%%%%%%%%%%%%%%%%%%%%%%%%%%%%%%%%%%%%%%%%
\begin{lemma}\label{lemma:proba_worth_case_scenario}
Consider any sequence $(r_i)_{i\in \mathcal{I}}$ in $\mathbb{R}$ with $\mathcal{I}=\{1, \cdots, m\}$, and a subset $\mathcal{I}_p \subseteq \mathcal{I}$ of size $p$. We have %formed by choosing elements from $\mathcal{I}$ uniformly at random. We have
\begin{eqnarray}
\mathcal{P}(\underset{i\in \mathcal{I}_p}{\text{max }} r_i=\underset{i\in \mathcal{I}}{\text{max }} r_i)\geq \frac{p}{m}~.
\end{eqnarray}
\end{lemma}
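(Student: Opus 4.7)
The plan is to identify a single maximizer and use the event that it lies in $\mathcal{I}_p$ as a sufficient condition. Concretely, pick any $i^\star \in \argmax_{i \in \mathcal{I}} r_i$ (if there is a tie, any one will do). Then the implication
\[
i^\star \in \mathcal{I}_p \;\Longrightarrow\; \max_{i \in \mathcal{I}_p} r_i \;=\; r_{i^\star} \;=\; \max_{i \in \mathcal{I}} r_i
\]
is immediate, so by monotonicity of probability $\mathcal{P}(\max_{\mathcal{I}_p} r_i = \max_{\mathcal{I}} r_i) \geq \mathcal{P}(i^\star \in \mathcal{I}_p)$.

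The remaining step is to evaluate $\mathcal{P}(i^\star \in \mathcal{I}_p)$ under the sampling convention in use (uniform sampling of a subset of size $p$ without replacement from an $m$-element set, which is precisely how $\mathcal{A}_t$ is drawn in Algorithm~\ref{algo:Randomized_away_general}, line~\ref{line:sub_sampling}). A standard counting argument gives
\[
\mathcal{P}(i^\star \in \mathcal{I}_p) \;=\; \frac{\binom{m-1}{p-1}}{\binom{m}{p}} \;=\; \frac{p}{m},
\]
which yields the claimed bound with equality. Alternatively, one may express $\mathcal{I}_p$ through $p$ sequential draws without replacement and sum the probabilities that $i^\star$ appears at each draw; both routes give $p/m$.

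There is no genuine obstacle here: the statement is a lower bound that only needs \emph{one} favourable event, and invoking the single index $i^\star$ trivialises what could have been a messy order-statistic computation. The only subtlety worth pointing out explicitly in the write-up is that the bound is tight (achieved when a unique coordinate attains the maximum), so the $p/m$ factor is the right quantity to propagate downstream into Lemma~\ref{lemma:proba_conditional_non_drop_step} and the main convergence argument.
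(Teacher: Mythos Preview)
Your proof is correct and matches the paper's own argument almost exactly: both pick a single index $i^\star$ (the paper calls it $i_0$) attaining the global maximum, observe that $\{i^\star\in\mathcal{I}_p\}$ is a sub-event of $\{\max_{\mathcal{I}_p}r_i=\max_{\mathcal{I}}r_i\}$, and then compute $\mathcal{P}(i^\star\in\mathcal{I}_p)=\binom{m-1}{p-1}/\binom{m}{p}=p/m$ by counting. The only cosmetic difference is that the paper first introduces the full set $M$ of maximizers before selecting $i_0\in M$, which you skip.
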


\begin{proof}
Consider $M=\{i\in \mathcal{I}~|~r_i=\underset{j\in \mathcal{I}}{\text{max }} r_j\}$. We have $\underset{i\in \mathcal{I}_p}{\text{max }} r_i=\underset{i\in \mathcal{I}}{\text{max }} r_i$ if and only if at least one element of $\mathcal{I}_p$ belongs to $M$:
\begin{align}
&\mathcal{P}(\underset{i\in \mathcal{I}_p}{\text{max }} r_i=\underset{i\in \mathcal{I}}{\text{max }} r_i)=\mathcal{P}(|\mathcal{I}_p\cap M|\geq 1)~.
\end{align}
By definition $M$ has at least one element $i_0$. Since $\{i_0 \in \mathcal{I}_p\}\subset \{|\mathcal{I}_p\cap M|\geq 1\}$
\begin{align}
&\mathcal{P}(|\mathcal{I}_p\cap M|\geq 1)\geq \mathcal{P}(\{i_0 \in \mathcal{I}_p\})~.
\end{align}
All subsets are taken uniformly at random, we just have to count the number of subset $\mathcal{I}_p$ of $\mathcal{I}$ of size $p$ with $i_0 \in \mathcal{I}_p$%. It requires the choice of one index $i_M$ in $M$ and then to sample a subset of size $p-1$ in $I\setminus \{i_M\}$
\begin{align}
&\mathcal{P}(\{i_0 \in \mathcal{I}_p\})=\frac{{{m-1}\choose{p-1}}}{{{m}\choose{p}}}= \frac{p}{m}\\
&\mathcal{P}(\underset{i\in \mathcal{I}_p}{\text{max }} r_i=\underset{i\in \mathcal{I}}{\text{max }} r_i)\geq \frac{p}{m}~.
\end{align}

\end{proof}

%%%%%%%%%%%%%%%%%%INTRODUCTION%%%%%%%%%%%%%%%%%%%%%%%%%%%%%%%%%%%%
In the second part of the main proof we ensure a geometric decrease when both $g_t=\widetilde{g}_t$ and $z_t=1$, i.e. outside of \textit{bad drop steps}. The following lemma helps quantifying the probability of $g_t=\widetilde{g}_t$ holding when $z_t=1$.

%%%%%%%%%%%%%%%%%%%%%%%%%%%%%%%%%%%%%%%%%%%%%%%%%%%%%%%%%%%%%%%%%
%%%%%%%%%%Lemma g_t=tilde g_t%%%%%%%%%%%%%%%%%%%%%%%%%%%%%%%%%%%%
%%%%%%%%%%%%%%%%%%%%%%%%%%%%%%%%%%%%%%%%%%%%%%%%%%%%%%%%%%%%%%%%%
\begin{lemma}\label{lemma:proba_conditional_non_drop_step}
Consider $g_t$ (defined in Lemma~\ref{lem:progress_direction_sub_away}) to be the partial pair-wise (PW) dual gap of RAFW at iteration $t$ with sub-sampling parameter $p$ on the constrained polytope $\mathcal{M}=\conv(\mathcal{A})$, where $\mathcal{A}$ is a finite set of extremes points of $\mathcal{M}$. $\widetilde{g}_t \defas \underset{\ss\in\mathcal{A}}{\text{max }}\langle -\nabla f(\xx_t),s-\vv_t\rangle$ is the pairwise dual gap of AFW starting at $\xx_t$ on this same polytope. Denote by $z_t$ the binary random variable that equals $0$ when the $t^{th}$ iteration of RAFW makes an away step that is a drop step with $\gamma_{max}<1$ (a bad drop step), and $1$ otherwise. Then we have the following bound %on the probability of not making a bad drop step
%\begin{eqnarray}
%\mathcal{P}(g_t=\widetilde{g}_t~|~\xx_t,z_t=1)\geq \Big(\frac{p}{|\mathcal{A}|}\Big)^2.
%\end{eqnarray}
\begin{empheq}[box=\mybluebox]{equation}\tag{PROB}\label{eq:proba}
\mathcal{P}(g_t=\widetilde{g}_t~|~\xx_t,z_t=1)\geq \Big(\frac{p}{|\mathcal{A}|}\Big)^2.
\end{empheq}

\end{lemma}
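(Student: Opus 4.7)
The plan is to express the event $\{g_t = \widetilde g_t\}$ in purely combinatorial terms and then invoke Lemma \ref{lemma:proba_worth_case_scenario} to pay a factor $p/|\mathcal{A}|$ for each source of randomness that the conditioning on $\{z_t = 1\}$ forces us to control. First I would observe that, given $\xx_t$, the Away atom $\vv_t = \argmin_{\vv \in \mathcal{S}_t}\langle -\nabla f(\xx_t), \vv\rangle$ is deterministic (it depends on $\mathcal{S}_t$ only, not on $\mathcal{A}_t$). Hence $g_t = \widetilde g_t$ is equivalent to $\max_{\ss \in \mathcal{V}_t}\langle -\nabla f(\xx_t), \ss\rangle = \max_{\ss \in \mathcal{A}}\langle -\nabla f(\xx_t), \ss\rangle$, which in turn is equivalent to $\mathcal{V}_t \cap M_t \neq \emptyset$, where $M_t := \argmax_{\ss \in \mathcal{A}}\langle -\nabla f(\xx_t), \ss\rangle$. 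Applying Lemma \ref{lemma:proba_worth_case_scenario} to the sequence $(\langle -\nabla f(\xx_t), \ss\rangle)_{\ss \in \mathcal{A} \setminus \mathcal{S}_t}$ (whose cardinality is at most $|\mathcal{A}|$) then gives the unconditional bound $\mathcal{P}(g_t = \widetilde g_t \mid \xx_t) \geq p/|\mathcal{A}|$.

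Next I would handle the conditioning on $\{z_t = 1\}$ via Bayes' formula,
\begin{equation*}
\mathcal{P}(g_t = \widetilde g_t \mid \xx_t, z_t = 1)
= \frac{\mathcal{P}(g_t = \widetilde g_t, z_t = 1 \mid \xx_t)}{\mathcal{P}(z_t = 1 \mid \xx_t)}.
\end{equation*}
The strategy is to further restrict the joint event in the numerator to a sub-event on which we can apply Lemma \ref{lemma:proba_worth_case_scenario} a second time. Specifically, I would split on whether the deterministic AFW step from $\xx_t$ is a bad drop step or not. If it is not, then on $\{g_t = \widetilde g_t\}$ the decision at Line \ref{line:decision_step} and the ensuing line search coincide with those of AFW started at $\xx_t$ (because $\vv_t$ and the value $\langle -\nabla f(\xx_t),\ss_t - \xx_t\rangle$ are forced), so $z_t = 1$ automatically and only one factor of $p/|\mathcal{A}|$ is needed. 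If AFW at $\xx_t$ would take a bad drop, then $z_t = 1$ requires the subsampled LMO to produce a strictly different FW atom that flips the decision in Line \ref{line:decision_step} or changes the line-search outcome; quantifying the probability of this flip with a second application of Lemma \ref{lemma:proba_worth_case_scenario} (over the complementary subset of atoms in $\mathcal{A} \setminus \mathcal{S}_t$) yields the extra factor of $p/|\mathcal{A}|$.

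Combining the two applications of Lemma \ref{lemma:proba_worth_case_scenario}, together with the trivial bound $\mathcal{P}(z_t = 1 \mid \xx_t) \leq 1$ in the Bayes denominator, gives the conservative estimate $(p/|\mathcal{A}|)^2$.

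The hard part is the second step: carefully justifying that the conditioning on $\{z_t = 1\}$ costs at most one extra factor of $p/|\mathcal{A}|$. The subtlety is that $z_t$ is a non-trivial function of $\mathcal{A}_t$ through the FW-versus-Away decision in Line \ref{line:decision_step} and through $\gamma_{\max}$, and in the worst configuration of $\xx_t$ the event $\{g_t = \widetilde g_t\}$ may even force $z_t = 0$, so one must be precise about which realizations of $\mathcal{A}_t$ simultaneously guarantee $g_t = \widetilde g_t$ \emph{and} $z_t = 1$. I expect this is why the authors adopt a deliberately loose bound: it trades sharpness for a clean combinatorial lower bound that is still enough to drive the $\eta^2$-dependence in Theorem \ref{th:RAFW_expectation_result}.
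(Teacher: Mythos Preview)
Your case split (whether the deterministic AFW step from $\xx_t$ is a bad drop) differs from the paper's, and your first case is handled correctly: when AFW is not in a bad-drop configuration, the event $\{g_t=\widetilde{g}_t\}$ forces RAFW to reproduce AFW's decision at Line~\ref{line:decision_step} and its line-search outcome, so $\{g_t=\widetilde{g}_t\}\subseteq\{z_t=1\}$ and a single application of Lemma~\ref{lemma:proba_worth_case_scenario} already gives $\mathcal{P}(g_t=\widetilde{g}_t\mid\xx_t,z_t=1)\geq p/|\mathcal{A}|$.

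Your second case, however, has a real gap. You claim that when AFW would take a bad drop, $z_t=1$ can still occur if the subsampled oracle returns an atom that ``flips the decision in Line~\ref{line:decision_step}''. But this is impossible: since $\mathcal{V}_t\subseteq\mathcal{A}$, one always has $\langle-\nabla f(\xx_t),\ss_t\rangle\leq\langle-\nabla f(\xx_t),\widetilde{\ss}_t\rangle$, so if AFW prefers the away direction then RAFW does too, with at least as much margin. The away direction, $\gamma_{\text{max}}$, and the line-search result depend only on $\xx_t$, hence RAFW performs the \emph{same} bad drop step and $z_t=0$ with probability one. There is nothing for a ``second application of Lemma~\ref{lemma:proba_worth_case_scenario}'' to act on. The repair is easy once you see it: this case is conditioning on a null event and is therefore vacuous, so the lemma actually follows from your first case alone---indeed with the tighter bound $p/|\mathcal{A}|\geq(p/|\mathcal{A}|)^2$.

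For comparison, the paper's proof does not split on the AFW outcome. It partitions $\{z_t=1\}$ into three sub-events $A_1,A_2,A_3$ (FW step; away step with $\gamma_{\text{max}}\geq1$; away step with $\gamma_{\text{max}}<1$ but not a drop), writes the target probability via Bayes' rule over this partition, and then performs a further deterministic case distinction on the sign of $\widetilde{g}_t-g_t^A$. In the case $\widetilde{g}_t\geq g_t^A$ it bounds $\mathcal{P}(g_t=\widetilde{g}_t\mid A_1)$ and $\mathcal{P}(A_1)$ separately, each by $p/|\mathcal{A}|$, which is where the squared factor arises. Your route, once the second case is recognized as vacuous, sidesteps this bookkeeping.
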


\begin{proof}
%INTRODUCTION OF THE PARTITION ON A_I
Recall that $g_t^A \defas \langle \rr_t,\dd_t^A\rangle$. By definition $\{z_t=0\}=\{g_t<g_t^A,\gamma_{\text{max}}<1,\gamma_t^*=\gamma_{\text{max}}\}$, where $\gamma_t^* \defas \argmin_{\gamma\in[0,\gamma_{\text{max}}]} f(\xx_t + \gamma \dd_t^A)$. Its complementary $\{z_t=1\}$ can thus be expressed as the partition $A_1 \cup A_2 \cup A_3$ where the $A_i$ are defined by
\begin{eqnarray}
A_1&=&\{g_t\geq g_t^A\}\quad \text{ (performs a FW step)}\\
A_2&=&\{g_t < g_t^A~,~\alpha_{\vv_t}^{(t)}/(1-\alpha_{\vv_t}^{(t)})\geq 1\} \quad \text{ (performs away step with $\gamma_{\text{max}} \geq 1$)}\\
A_3&=&\{g_t < g_t^A~,~\alpha_{\vv_t}^{(t)}/(1-\alpha_{\vv_t}^{(t)})<1~,~\gamma_t^*<\alpha_{\vv_t}^{(t)}/(1-\alpha_{\vv_t}^{(t)})\}.
\end{eqnarray}

First note that in the case of $A_2$ and $A_3$, $\gamma_{\text{max}}=\alpha_{\vv_t}^{(t)}/(1-\alpha_{\vv_t}^{(t)})$. Though the right hand side formulation highlights that it is entirely determined by $\xx_t$, recalling that $\alpha_{\vv_t}^{(t)}$ is the mass along the atom $\vv_t$ in the decomposition of $\xx_t$ in \S\ref{s:rafw}.

 From a higher level perspective, these cases are those for which we can guarantee a geometrical decrease of $h_t= f(\xx_t)-f(\xx^*)$ (see second part of main proof). By definition, the $A_i$ are disjoints. $A_1$ represents a choice of a FW step in RAFW contrary to $A_2$ and $A_3$ which stands for an away step choice in RAFW. $A_2$ is an away step for which there is enough potential mass ($\gamma_{\text{max}}>1$) to move along the away direction and to ensure sufficient objective decreasing. $A_3$ encompasses the situations where there is not a lot of mass along the away direction ($\gamma_{\text{max}}<1$) but which is not a drop step, e.g. the amount of mass is not a limit to the descent.

%EXPLANATION OF GOAL
Our goal is to lower bound $P=\mathcal{P}(g_t=\widetilde{g}_t~|~\xx_t,z_t=1)$. The following probabilities will be with respect to the $t^{th}$ sub-sampling only. Notice that $g_t^A$, $\widetilde{g}_t$ and $\alpha_{v_t}$ are known given $\{\xx_t,z_t=1\}$. Using Bayes' rule, and because the $A_i$ are disjoints, we have
\begin{eqnarray}
P&=&\mathcal{P}(g_t=\widetilde{g}_t~|~\xx_t,\{z_t=1\})\nonumber\\
&=&\frac{\sum_{i=1}^3{\mathcal{P}(g_t=\widetilde{g}_t~|~\xx_t,A_i)\mathcal{P}(A_i~|~\xx_t)}}{\sum_{i=1}^3{\mathcal{P}(A_i~|~\xx_t)}}~.\label{eq:proba_bayes_decomposition}
\end{eqnarray}

%WHY A CASE DISTINCTION
By definition of $g_t$ and $\widetilde{g}_t$, $g_t\leq \widetilde{g}_t$, so that measuring the probability of an event like $\{g_t=\widetilde{g}_t\}$ conditionally on $\{g_t\leq g_t^A\}$ will naturally depend on whether or not, the deterministic condition $\widetilde{g}_t\geq g_t^A$ is satisfied. Hence the following case distinction.

Recall $\mathcal{V}_t =\mathcal{S}_t\cup\mathcal{A}_t$.

%%%%%%%%%%%%%%%FIRST CASE%%%%%%%%%%%%%%%%%%%%%%%%%%%%%%%%%%%%%%%%%%%%%%%%%
\noindent\textbf{Case $\widetilde{g}_t < g_t^A$.}
\begin{eqnarray}
P=\frac{\sum_{i=1}^3{\mathcal{P}(g_t=\widetilde{g}_t~|~x_t,A_i,\widetilde{g}_t < g_t^A)\mathcal{P}(A_i~|~x_t,\widetilde{g}_t < g_t^A)}}{\sum_{i=1}^3{\mathcal{P}(A_i~|~x_t,\widetilde{g}_t < g_t^A)}}~.
\end{eqnarray}
Recall that $A_1=\{g_t\geq g_t^A\}$. Since by definition $g_t\leq \widetilde{g}_t$, conditionally on $\{\widetilde{g}_t < g_t^A\}$, the probability of $A_1$ is zero. Consequently the above reduces to
\begin{eqnarray}
P&=&\frac{\sum_{i=2}^3{\mathcal{P}(g_t=\widetilde{g}_t~|~x_t,A_i,\widetilde{g}_t < g_t^A)\mathcal{P}(A_i~|~x_t,\widetilde{g}_t < g_t^A)}}{\sum_{i=2}^3{\mathcal{P}(A_i~|~x_t,\widetilde{g}_t < g_t^A)}}\nonumber\\
&\geq& \frac{p}{|\mathcal{A}|}     \frac{\sum_{i=2}^3{\mathcal{P}(A_i~|~x_t,\widetilde{g}_t < g_t^A)}}
{\sum_{i=2}^3{\mathcal{P}(A_i~|~x_t,\widetilde{g}_t \leq g_t^A)}}
=\frac{p}{|\mathcal{A}|}~\label{eq:first_case}.
\end{eqnarray}

%% INTERLUDE
Where the last inequality is because for $i=2,3$ we have $\mathcal{P}(g_t=\widetilde{g}_t~|~x_t,A_i,\widetilde{g}_t < g_t^A)\geq \frac{p}{|\mathcal{A}|}$. Indeed for $A_3$ (case $A_2$ is similar) denote
\begin{eqnarray}
P_1&=&\mathcal{P}(g_t=\widetilde{g}_t~|~x_t,A_3,\widetilde{g}_t < g_t^A)\label{eq:lower_bound_proba_A_3}\\
&=&\mathcal{P}(\underset{\ss\in\mathcal{V}_t}{\text{max }}\langle \rr_t,\ss\rangle=\underset{\ss\in\mathcal{A}}{\text{max }}\langle \rr_t,\ss\rangle~|~\xx_t, \underset{\ss\in\mathcal{V}_t}{\text{max }}\langle \rr_t,\ss\rangle < C_0 ,\underset{\ss\in\mathcal{A}}{\text{max }}\langle \rr_t,\ss\rangle<C_0,\alpha_{\vv_t}^{(t)}/(1-\alpha_{\vv_t}^{(t)})< 1, \gamma_t^*<\alpha_{\vv_t}^{(t)}/(1-\alpha_{\vv_t}^{(t)}))~.\nonumber
\end{eqnarray}
with  $C_0 \defas g_t^A+\langle \rr_t,\vv_t\rangle$ and $\rr_t= -\nabla f(\xx_t)$ not depending on the $t^{th}$ sub-sampling. Also the event $\{\underset{\ss\in\mathcal{V}_t}{\text{max }}\langle \rr_t,\ss\rangle < C_0\}$ is a consequence of $\{\underset{\ss\in\mathcal{A}}{\text{max }}\langle \rr_t,\ss\rangle < C_0\}$ so that $P_1$ simplifies to
\begin{eqnarray}
P_1&=&\mathcal{P}(\underset{\ss\in\mathcal{V}_t}{\text{max }}\langle \rr_t,\ss\rangle=\underset{\ss\in\mathcal{A}}{\text{max }}\langle \rr_t,\ss\rangle~|~\xx_t ,\underset{\ss\in\mathcal{A}}{\text{max }}\langle \rr_t,\ss\rangle < C_0,\alpha_{\vv_t}^{(t)}/(1-\alpha_{\vv_t}^{(t)})< 1, \gamma_t^*<\alpha_{\vv_t}^{(t)}/(1-\alpha_{\vv_t}^{(t)}))\label{eq:last_test}~.
%&=&\mathcal{P}(\underset{\ss\in\mathcal{V}_t}{\text{max }}\langle \rr_t,\ss\rangle=\underset{\ss\in\mathcal{A}}{\text{max }}\langle \rr_t,\ss\rangle~|~(\langle \rr_t,\ss\rangle)_{\ss\in\mathcal{A}}),\label{eq:last_test}
\end{eqnarray}
 By definition
\begin{eqnarray}
\gamma_t^*\in\underset{\gamma\in[0,\frac{\alpha_{\vv_t}^{(t)}}{1-\alpha_{\vv_t}^{(t)}}]}{\argmin~} f(\xx_t+\gamma \dd_t^A)~,
\end{eqnarray}
so that $\gamma_t^*$ does not depend on the $t^{th}$ sub-sampling. Finally all the conditioning in the probability of \eqref{eq:last_test} do not depend on this $t^{th}$ sub-sampling. Hence we are in the position of using Lemma~\ref{lemma:proba_worth_case_scenario} for the sequence $(\langle \rr_t, \ss\rangle)_{\ss\in\mathcal{A}}$. Also by definition of $\mathcal{V}_t=\mathcal{S}_t\cup\mathcal{A}_t$, we have $|\mathcal{V}_t|\geq p$ so that we finally get
\begin{eqnarray}
\mathcal{P}(g_t=\widetilde{g}_t~|~x_t,A_3,\widetilde{g}_t < g_t^A) \geq \frac{p}{|\mathcal{A}|}~.
\end{eqnarray}
This was what was needed to conclude \eqref{eq:first_case}.

%We have by definition in RAFW, $|\mathcal{V}_t|\geq p$. Because Lemma \ref{lemma:proba_worth_case_scenario} applies on any sequence of $|\mathcal{A}|$ numbers, it applies in particular to $(\langle \rr_t, \ss\rangle)_{\ss\in\mathcal{A}}$ satisfying the constraints expressed in the conditioning of the probability in \eqref{eq:last_test}. We thus finally get 
%\begin{eqnarray}
%\mathcal{P}(g_t=\widetilde{g}_t~|~x_t,A_3,\widetilde{g}_t < g_t^A) \geq \frac{p}{|\mathcal{A}|}~.
%\end{eqnarray}
%This was what was needed to conclude \eqref{eq:first_case}.

%%%%%%%%%%%%%%%SECOND CASE%%%%%%%%%%%%%%%%%%%%%%%%%%%%%%%%%%%%%%%%%%%%%%%%%%
\noindent\textbf{Case $\widetilde{g}_t \geq g_t^A$.} In such a case, $P$ from \eqref{eq:proba_bayes_decomposition} rewrites as
\begin{eqnarray}
P=\frac{\sum_{i=1}^3{\mathcal{P}(g_t=\widetilde{g}_t~|~x_t,A_i,\widetilde{g}_t \geq g_t^A)\mathcal{P}(A_i~|~x_t,\widetilde{g}_t \geq g_t^A)}}{\sum_{i=1}^3{\mathcal{P}(A_i~|~x_t,\widetilde{g}_t \geq g_t^A)}}~.
\end{eqnarray}
Here $\mathcal{P}(g_t=\widetilde{g}_t~|~x_t,A_i,\widetilde{g}_t \geq g_t^A)=0$ for $i=2,3$ because $A_i$ implies $g_t < g_t^A$. So that when $\widetilde{g}_t \geq g_t^A$ it is then impossible for $g_t$ to equal $\widetilde{g}_t$. 
\begin{eqnarray*}
P=\frac{\mathcal{P}(g_t=\widetilde{g}_t~|~x_t,A_1,\widetilde{g}_t \geq g_t^A)\mathcal{P}(A_1~|~x_t,\widetilde{g}_t \geq g_t^A)}{\sum_{i=1}^3{\mathcal{P}(A_i~|~x_t,\widetilde{g}_t \geq g_t^A)}}~.
\end{eqnarray*}

Here also we use, and prove later on (see \S below the conclusion of the proof of the Lemma), the lower bound
\begin{eqnarray}\label{eq:lower_bound_proba_g}
\mathcal{P}(g_t=\widetilde{g}_t~|~x_t,A_1,\widetilde{g}_t \geq g_t^A)\geq \frac{p}{|\mathcal{A}|}~,
\end{eqnarray}
that implies
\begin{eqnarray*}
P\geq \frac{p}{|\mathcal{A}|}\frac{\mathcal{P}(A_1~|~x_t,\widetilde{g}_t \geq g_t^A)}{\sum_{i=1}^3{\mathcal{P}(A_i~|~x_t,\widetilde{g}_t \geq g_t^A)}}~.
\end{eqnarray*}
Because the $A_i$ are disjoint, $\sum_{i=1}^3{\mathcal{P}(A_i~|~x_t,\widetilde{g}_t \geq g_t^A)}\leq 1$ we have
\begin{eqnarray*}
P\geq\frac{p}{|\mathcal{A}|}\mathcal{P}(A_1~|~x_t,\widetilde{g}_t \geq g_t^A)~.
\end{eqnarray*}
Using a similar lower bound as \eqref{eq:lower_bound_proba_g}, namely 
\begin{eqnarray}\label{eq:lower_bound_proba_A_1}
\mathcal{P}(A_1~|~x_t,\widetilde{g}_t \geq g_t^A)\geq \frac{p}{|\mathcal{A}|}~,
\end{eqnarray}
we finally get
\begin{eqnarray}\label{eq:lower_bounding_last_case}
P\geq \Big(\frac{p}{|\mathcal{A}|}\Big)^2~.
\end{eqnarray}

%%%%%%%%%%%%%%%CONCLUSION%%%%%%%%%%%%%%%%%%%%%%%%%%%%%%%%%%%%%%%%%%%%%%%%%%
Since it is hard to precisely count the occurrences of $\{\widetilde{g}_t\geq g_t^A\}$ and $\{\widetilde{g}_t<g_t^A\}$, we use a conservative bound in \eqref{eq:lower_bounding_last_case}
%\begin{eqnarray}
%\mathcal{P}(g_t=\tilde g_t~|~\xx_t,z_t=1)\geq \big(\frac{p}{|\mathcal{A}|}\big)^2~.
%\end{eqnarray}
\begin{empheq}[box=\mybluebox]{equation}
\mathcal{P}(g_t=\widetilde{g}_t~|~\xx_t,z_t=1)\geq \Big(\frac{p}{|\mathcal{A}|}\Big)^2~.
\end{empheq}
This will of course make our bound on the rate of convergence very conservative.

%%%%%%%%%%%%%%%SOME JUSTIFICATION%%%%%%%%%%%%%%%%%%%%%%%%%%%%%%%%%%%%%%%%%%%%%%%%%%
\noindent\textbf{Justification for \eqref{eq:lower_bound_proba_g} and \eqref{eq:lower_bound_proba_A_1}}.

%%%%%%%%%%%%%%%%%%%%
%%%%%%%% JUSTIFICATION 1
%%%%%%%%%%%%%%%%%%%%
Lets denote the left hand side of\eqref{eq:lower_bound_proba_g} by $P_2$. By definition of $g_t$ and $\widetilde{g}_t$, with $\rr_t=-\nabla f(\xx_t)$, we have:
\begin{align}
P_2&=\mathcal{P}(\underset{\ss\in\mathcal{V}_t}{\text{max }}\langle \rr_t,\ss-\vv_t\rangle=\underset{\ss\in\mathcal{A}}{\text{max }}\langle \rr_t,\ss-\vv_t\rangle~|~\xx_t, \underset{\ss\in\mathcal{V}_t}{\text{max }}\langle \rr_t,\ss-\vv_t\rangle \geq g_t^A,\underset{\ss\in\mathcal{A}}{\text{max }}\langle \rr_t,\ss-\vv_t\rangle\geq g_t^A)\\
&=\mathcal{P}(\underset{\ss\in\mathcal{V}_t}{\text{max }}\langle \rr_t,\ss\rangle=\underset{\ss\in\mathcal{A}}{\text{max }}\langle \rr_t,\ss\rangle~|~\xx_t, \underset{\ss\in\mathcal{V}_t}{\text{max }}\langle \rr_t,\ss\rangle \geq C_0 ,\underset{\ss\in\mathcal{A}}{\text{max }}\langle \rr_t,\ss\rangle\geq C_0)~,
\end{align}
where $C_0 \defas g_t^A+\langle \rr_t,v_t\rangle$ and $\rr_t$ does not depend on the random sampling at iteration $t$. Bayes formula leads to
\begin{align}
P_2=\frac{\mathcal{P}(\{\underset{\ss\in\mathcal{V}_t}{\text{max }}\langle \rr_t,\ss\rangle=\underset{\ss\in\mathcal{A}}{\text{max }}\langle \rr_t,\ss\rangle \}\cap\{ \underset{\ss\in\mathcal{V}_t}{\text{max }}\langle \rr_t,\ss\rangle \geq C_0\} ~|~\xx_t,\underset{\ss\in\mathcal{A}}{\text{max }}\langle \rr_t,\ss\rangle\geq C_0)}{\mathcal{P}(\underset{\ss\in\mathcal{V}_t}{\text{max }}\langle \rr_t,\ss\rangle \geq C_0 ~|~\xx_t,\underset{\ss\in\mathcal{A}}{\text{max }}\langle \rr_t,\ss\rangle\geq C_0)}~.
\end{align}
Conditionally on $\{\underset{\ss\in\mathcal{A}}{\text{max }}\langle \rr_t,\ss\rangle\geq C_0\}$, the event
$\{\underset{\ss\in\mathcal{V}_t}{\text{max }}\langle \rr_t,\ss\rangle=\underset{\ss\in\mathcal{A}}{\text{max }}\langle \rr_t,\ss\rangle \}$ implies$ \{\underset{\ss\in\mathcal{V}_t}{\text{max }}\langle \rr_t,\ss\rangle \geq C_0\}$ which leads to
\begin{align*}
P_2&=\frac{\mathcal{P}(\underset{\ss\in\mathcal{V}_t}{\text{max }}\langle \rr_t,\ss\rangle=\underset{\ss\in\mathcal{A}}{\text{max }}\langle \rr_t,\ss\rangle ~|~\xx_t,\underset{\ss\in\mathcal{A}}{\text{max }}\langle \rr_t,\ss\rangle\geq C_0)}{\mathcal{P}(\underset{\ss\in\mathcal{V}_t}{\text{max }}\langle \rr_t,\ss\rangle \geq C_0 ~|~\xx_t,\underset{\ss\in\mathcal{A}}{\text{max }}\langle \rr_t,\ss\rangle\geq C_0)}\\
&\geq  \mathcal{P}(\underset{\ss\in\mathcal{V}_t}{\text{max }}\langle \rr_t,\ss\rangle=\underset{\ss\in\mathcal{A}}{\text{max }}\langle \rr_t,\ss\rangle ~|~\xx_t,\underset{\ss\in\mathcal{A}}{\text{max }}\langle \rr_t,\ss\rangle\geq C_0)\geq \frac{p}{|\mathcal{A}|}~,
%&\geq\mathcal{P}(\underset{\ss\in\mathcal{V}_t}{\text{max }}\langle \rr_t,\ss\rangle=\underset{\ss\in\mathcal{A}}{\text{max }}\langle \rr_t,\ss\rangle~|~(\langle \rr_t,\ss\rangle)_{\ss\in\mathcal{A}})\geq \frac{p}{|\mathcal{A}|}~,
\end{align*}
where the last inequality is a consequence of applying Lemma $2$ on the sequence $(\langle\rr_t,\ss\rangle)_{\ss\in\mathcal{A}}$

%%%%%%%%%%%%%%%%%%%%
%%%%%%%% JUSTIFICATION 2
%%%%%%%%%%%%%%%%%%%%
Similarly let's denote the left hand side of \eqref{eq:lower_bound_proba_A_1} by $P_3$. The first inequality is justified because conditionally on $\{\widetilde{g}_t\geq g_t^A\}$, $\{g_t=\widetilde{g}_t\}\subset\{g_t\geq g_t^{A}\}$ and the last by applying, similarly as for \eqref{eq:lower_bound_proba_g}, Lemma \ref{lemma:proba_worth_case_scenario} on the sequence $(\langle \rr_t,\ss\rangle)_{\ss\in\mathcal{A}}$.
\begin{eqnarray*}
P_3&=&\mathcal{P}(g_t\geq g^A_t~|~\xx_t , \widetilde{g}_t \geq g_t^{A})\\
&\geq& \mathcal{P}(g_t=\widetilde{g}_t~|~\xx_t,\widetilde{g}_t\geq g_t^A),\\
&\geq&\mathcal{P}(\underset{\ss\in\mathcal{V}_t}{\text{max }}\langle \rr_t,\ss\rangle=\underset{\ss\in\mathcal{A}}{\text{max }}\langle \rr_t,\ss\rangle~|~\xx_t, \underset{\ss\in\mathcal{A}}{\text{max }}\langle \rr_t,\ss\rangle\geq C_0)\\
%&\geq&\mathcal{P}(\underset{\ss\in\mathcal{V}_t}{\text{max }}\langle \rr_t,\ss\rangle=\underset{\ss\in\mathcal{A}}{\text{max }}\langle \rr_t,\ss\rangle~|~(\langle \rr_t,\ss\rangle)_{\ss\in\mathcal{A}})\\
&\geq&\frac{p}{|\mathcal{A}|}~.
\end{eqnarray*}

\end{proof}

\subsection{Main proof}
%%%%%%%%%%%%%%%%%%%%%%%%%%%%%%%%%%%%%%%%%%%%%%%%%%%%%%%%%%%%%%%%%
%%%%%%%%%%MAIN THEOREM AND PROOF%%%%%%%%%%%%%%%%%%%%%%%%%%%%%%%%%
%%%%%%%%%%%%%%%%%%%%%%%%%%%%%%%%%%%%%%%%%%%%%%%%%%%%%%%%%%%%%%%%%
\begin{thmbis}{th:RAFW_expectation_result}  Consider the set $\mathcal{M}=\conv(\mathcal{A})$, with $\mathcal{A}$ a finite set of extreme atoms, after $T$ iterations of Algorithm~\ref{algo:Randomized_away_general} (RAFW) we have the following linear convergence rate
\begin{eqnarray}
\mathbb{E}\big[ h(\xx_{T+1})\big]\leq \big(1- \eta^2 \rho_f\big)^{\max\{0,\floor{{(T-s)}/{2}}\}} h(\xx_{0})~,
\end{eqnarray}
with $\rho_f=\frac{\mu_f^A}{4C_f^A}$, $\eta=\frac{p}{|\mathcal{A}|}$ and $s=|\mathcal{S}_0|$.
\end{thmbis}

%\begin{thmbis}{th:RAFW_expectation_result} Suppose $f$ has bounded smoothness constant\footnote{The usual assumption of Lipschitz continuity of the gradient over compact domain result in a bounded smoothness constant.} $C_f^A$ as well as geometric strong convexity constant $\mu_f^A$.  Consider the set $\mathcal{M}=\text{conv}(\mathcal{A})$, with $\mathcal{A}$ a finite set of extrem atoms. 
%Let $s \defas \text{supp}(\xx_0)$.
%Then after $T$ iterations of RAFW (Algorithm \ref{algo:Randomized_away_general}), with a $p$ parameter of sub-sampling, we have
%\begin{eqnarray}\label{eq:RAFW_bound_result}
%\mathbb{E}\big[ h(\xx_{T+1})\big]\leq \big(1- \eta^2 \rho_f\big)^{\max\{0,\floor{{(T-s)}/{2}}\}} h(\xx_{0})~,
%\end{eqnarray}
%with $\rho_f=\frac{\mu_f^A}{4C_f^A}$, where $\eta=\frac{p}{|\mathcal{A}|}$ and $\mathbb{E}$ is a full expectation over all randomness until iteration $T$. 
%\end{thmbis}

%%%%%%%%%%%%%%%%%%%%%%%%%%%%%%%%%%%%%%%%%%%%%%%%%%%%%%%%%%%%%%%%%%%%%%%%%%%%%%%%%%%%%%%%%%%%%%%%%%%%%
%%%%%%%%%%%%%%%%%%%%%%%%%MAIN PROOF%%%%%%%%%%%%%%%%%%%%%%%%%%%%%%%%%%%%%%%%%%%%%%%%%%%%%%%%%%%%%%%%%%
%%%%%%%%%%%%%%%%%%%%%%%%%%%%%%%%%%%%%%%%%%%%%%%%%%%%%%%%%%%%%%%%%%%%%%%%%%%%%%%%%%%%%%%%%%%%%%%%%%%%%

\begin{proof}
The classical curvature constant used in proofs related to non-Away Frank-Wolfe is
\begin{eqnarray}\label{eq:old_curvature_constant}
C_f\coloneqq \underset{\substack{\xx,\ss\in\mathcal{M},\gamma\in [0,1] \\ \yy=\xx+\gamma (\ss-\vv)}}{\text{sup }} {\frac{2}{\gamma^2}\big( f(\yy)-f(\xx)-\langle\nabla f(\xx),\yy-\xx\rangle \big)}~.
\end{eqnarray}
It is tailored for algorithms in which the update is of the form $\xx_{t+1}=(1-\gamma)\xx_{t}+\gamma \vv_t$, but this is not the shape of all updates in away versions of FW. In \cite{lacoste2015global} they introduced a modification of the above curvature constant that we also use to analyze RAFW. It is defined in \citep[ equation (26)]{lacoste2015global} as
\begin{eqnarray}\label{eq:new_curvature_constant}
C_f^A\coloneqq \underset{\substack{\xx,\ss,\vv\in\mathcal{M},\gamma\in [0,1] \\ \yy=\xx+\gamma (\ss-\vv)}}{\text{sup }} {\frac{2}{\gamma^2}\big( f(\yy)-f(\xx)-\gamma\langle\nabla f(\xx),\ss-\vv\rangle \big)}~.
\end{eqnarray}
It differs from $C_f$ \eqref{eq:old_curvature_constant} because it allows to move outside of the domain $\mathcal{M}$. We thus require L-lipschitz continuous function on any compact set for that quantity to be upper-bounded. We refer to \S \textbf{curvature constants} on \citep[Appendix D]{lacoste2015global} for thorough details. The first part of the proof reuses the scheme of \citep[Theorem 8]{lacoste2015global}. 

%%%%%%%%%%%%%%%%%%%%%%%%%%%%%%%%%%%%%%%%%%%%%%%%%%%%%%%%%%%%%%%%%%%%%%%%%%%%%%%%%%%%%%%%%%%%%%%%%%%%%%%%
%%%%%%%%%%%%%%%%%FIRST PART: UPPER BOUNDING DIFF%%%%%%%%%%%%%%%%%%%%%%%%%%%%%%%%%%%%%%
%%%%%%%%%%%%%%%%%%%%%%%%%%%%%%%%%%%%%%%%%%%%%%%%%%%%%%%%%%%%%%%%%%%%%%%%%%%%%%%%%%%%%%%%%%%%%%%%%%%%%%%%
\noindent\textbf{First part.}
\textit{Upper bounding $h_t$}: Considering an iterate $\xx_{t}$ that is not optimal (e.g. $\xx_t \neq \xx^*$), from \citep[Eq. (28)]{lacoste2015global}, we have
\begin{eqnarray}\label{eq:upper_bound_h}
f(\xx_{t})-f(\xx^*)=h_t\leq \frac{\widetilde g_t^2}{2\mu_f^{A}}~,
\end{eqnarray}
where $\widetilde g_t$ is the \textit{pair-wise dual gap} defined by $\widetilde g_t=\langle \widetilde\ss_t-\vv_t,-\nabla f(\xx_t)\rangle$.  $\widetilde\ss_t$ and $\vv_t$ are respectively the FW atom and the away atom in the classical Away step algorithm (conditionally on $\xx_t$, the away atom of the randomized variant coincides with the away atom of the non-randomized variant). %We will later relate this to the partial pair-wise dual gap $g_t$, where the FW atom $\widetilde s_t$ has been replaced by the randomized FW atom $s_t$ coming from the sub-sampled oracle.
The result is still valid here as it only uses the definition of the affine invariant version of the strong convexity parameter and does not depend on the way $\xx_t$ are constructed (see \textit{upper bounding $h_t$} in \citep[Proof for AFW in Theorem 8]{lacoste2015global}).\\

Note that this implicitly assumes the away atom to be defined, e.g. the support of the iterate $\xx_t$ never to be zero. This is ensured by the algorithm simply because it always does convex updates.

%%%%%%%%%%%%%%%%%%%%%%%%%%%%%%%%%%%%%%%%%%%%%%%%%%%%%%%%%%%%%%%%%%%%%%%%%%%%%%%%%%%%%%%%%%%%%%%%%%%%%%%%
%%%%%%%%%%%%%%%%%SECOND PART: LOWER BOUNDING DIFF%%%%%%%%%%%%%%%%%%%%%%%%%%%%%%%%%%%%%%
%%%%%%%%%%%%%%%%%%%%%%%%%%%%%%%%%%%%%%%%%%%%%%%%%%%%%%%%%%%%%%%%%%%%%%%%%%%%%%%%%%%%%%%%%%%%%%%%%%%%%%%%
\noindent\textbf{Second part.}
\textit{Lower bounding progress $h_t-h_{t+1}$}. Consider $\xx_t$ a non-optimal iterate. At step $t$, the update in Algorithm \ref{algo:Randomized_away_general} writes $\xx_{t+1}(\gamma)=\xx_t+\gamma \dd_t$. $\gamma$ is optimized by line-search in the segment $[0,\gamma_{\text{max}}]$. Because in either cases $\dd_t$ is a difference between two elements of $\mathcal{M}$, from the definition of $C_f^A$ and because of the exact line search, we have
\begin{eqnarray*}
f(\xx_{t+1}) \leq \underset{\gamma\in [0,\gamma_{\text{max}}]}{\min} \big(f(\xx_t) + \gamma \langle \nabla f(\xx_t),\dd_t\rangle +\frac{\gamma^2}{2} C_f^{\mathcal{A}} \big)~,
\end{eqnarray*}
so that for any $\gamma\in[0;\gamma_{\text{max}}]$
\[
f(\xx_{t+1})-f(\xx_t) \leq \gamma \langle \nabla f(\xx_t),\dd_t\rangle +\frac{\gamma^2}{2} C_f^{\mathcal{A}}
\]
or again
\begin{eqnarray}\label{eq:curvature_inequality}
\gamma\frac{g_t}{2}-\frac{\gamma^2}{2}C^A_f \leq f(\xx_t)-f(\xx_{t+1}),
\end{eqnarray}
where the last inequality is a consequence of Lemma \ref{lem:progress_direction_sub_away}.
We write $\gamma_t^B \defas \frac{g_t}{2C_f^A}\geq 0$, the minimizer of the left hand side of \eqref{eq:curvature_inequality}.

\noindent\textbf{Case $\gamma_{\text{max}}\geq 1$ and $\gamma_t^B\leq \gamma_{\text{max}}$}. \eqref{eq:curvature_inequality} evaluated on $\gamma = \gamma_t^B$ gives
\begin{eqnarray}
\frac{g_t^2}{4C_f^A}-\frac{g_t^2}{8C_f^A}\leq f(\xx_t)-f(\xx_{t+1})\nonumber\\
\implies\big(\frac{g_t}{\widetilde{g}_t}\big)^2\frac{\widetilde{g}_t^2}{8C_f^A} \leq h_t-h_{t+1}.\label{eq:initial_inequality_ht_ht1}
\end{eqnarray}
Indeed, $\xx_t$ is assumed not to be optimal, so that $\widetilde{g}_t\neq 0$. Combining \eqref{eq:initial_inequality_ht_ht1} with \eqref{eq:upper_bound_h} gives
\begin{align}
h_{t+1}&\leq h_t - \big(\frac{g_t}{\widetilde{g}_t}\big)^2\frac{\widetilde{g}_t^2}{8C_f^A} \\
&\leq h_t - \big(\frac{g_t}{\widetilde{g}_t}\big)^2 \frac{\mu_f^A}{4C_f^A} h_t\\
&=\big( 1-\rho_f \big(\frac{g_t}{\widetilde{g}_t}\big)^2 \big) h_t~.
\end{align}

\noindent\textbf{Case $\gamma_{\text{max}}\geq 1$ and $\gamma_t^B > \gamma_{\text{max}}$}. $\gamma_t^B=\frac{g_t}{2 C_f^{\mathcal{A}}}$ implies $g_t\geq 2 C_f^A$. 
\eqref{eq:curvature_inequality} transforms into
\begin{eqnarray*}
\frac{g_t}{2}\big(\gamma-\frac{\gamma^2}{2}\big)\leq f(\xx_t)-f(\xx_{t+1})\nonumber\\
\frac{g_t}{\widetilde{g}_t}\frac{\widetilde{g}_t}{2}\big(\gamma-\frac{\gamma^2}{2}\big)\leq f(\xx_t)-f(\xx_{t+1})~.\nonumber\\
\end{eqnarray*}
Using $\widetilde{g}_t\geq h_t$ and evaluating at $\gamma=1$, leaves us with
\begin{eqnarray}
h_{t+1}\leq \big( 1-\frac{1}{4}  \frac{g_t}{\widetilde{g}_t}  \big) h_t.
\end{eqnarray}

Because $\mu_f^A\leq C_f^A$ \citep[Remark 7.]{lacoste2015global} and $\rho_f=\frac{\mu_f^{A}}{4C_f^A}$, the two previous cases resolve in the following inequality
\begin{eqnarray}\label{eq:recurrence_ht}
h_{t+1}\leq \big( 1-{\rho_f} \big(\frac{g_t}{\widetilde{g}_t}\big)^2 \big) h_t~.
\end{eqnarray}

\noindent\textbf{Case $\gamma_{\text{max}}< 1$ and $\gamma_t^*<\gamma_{\text{max}}$}. By definition
\begin{eqnarray}
\gamma_t^*&=&\underset{\gamma\in[0,\gamma_{\text{max}}]}{\argmin~}f(\xx_t+\gamma \dd_t)=F(\gamma)~.
\end{eqnarray}
$f$ is convex and its minimum on $[0;\gamma_{\text{max}}]$ is not reached at $\gamma_{\text{max}}$. It is then also a minimum on the interval $[0;+\infty]$, and in particular we have
\begin{eqnarray}
\gamma_t^*&=&\underset{\gamma\in[0,1]}{\argmin~}f(\xx_t+\gamma \dd_t)=F(\gamma)~.
\end{eqnarray}
\eqref{eq:curvature_inequality} can then be written with $\gamma\in[0,1]$ which leads to the previous case result \eqref{eq:recurrence_ht}.

\noindent\textbf{Case $\gamma_{\text{max}}< 1$ and $\gamma_t^*=\gamma_{\text{max}}$}. This corresponds to a particular drop step for which we only guarantee $h_{t+1}\leq h_t$ (exact line-search). We call this case a \textit{bad drop step} (indeed $\gamma_{\text{max}}>1$ and $\gamma_t^*=\gamma_{\text{max}}$ also corresponds to a drop step, but for which we can prove a bound of the form $h_{t+1}\leq h_t (1-\rho_f \big(\frac{g_t}{\widetilde{g}_t}\big)^2)$).\\

We use the binary indicator $z_t$ to distinguish between the step where \eqref{eq:recurrence_ht} is guaranteed or not. %When it is not guaranteed, it corresponds to the so-called \textit{bad drop step}. 
Denote by $z_t=0$ when doing a \textit{bad drop step} and $z_t=1$ otherwise. The second part can be summed-up in
\begin{eqnarray}\label{eq:fruit_second_part}
h_{t+1}\leq h_t (1-\rho_f \big(\frac{g_t}{\widetilde{g}_t}\big)^2)^{z_t}.
\end{eqnarray}
%And we immediately get by recurrence
%\begin{eqnarray}\label{eq:dual_improvement_without_expectation}
%h_{t+1}\leq h_0 \prod_{k=0}^{t}\big( 1-\rho_f   \big(\frac{g_k}{\tilde{g}_k}\big)^2  \big)^{z_k}.
%\end{eqnarray}

%%%%%%%%%%%%%%%%%%%%%%%%%%%%%%%%%%%%%%%%%%%%%%%%%%%%%%%%%%%%%%%%%%%%%%%%%%
%%%%%%%%%%%%%%%%%%%%%%%%%%%%%%%%%%%%%%%%%%%%%%%%%%%%%%%%%%%%%%%%%%%%%%%%%%
%%%%%%%%%%%%%%%%%LAST PART%%%%%%%%%%%%%%%%%%%%%%%%%%%%%%%%%%%%%%%%%%%%%%%%
%%%%%%%%%%%%%%%%%%%%%%%%%%%%%%%%%%%%%%%%%%%%%%%%%%%%%%%%%%%%%%%%%%%%%%%%%%
%%%%%%%%%%%%%%%%%%%%%%%%%%%%%%%%%%%%%%%%%%%%%%%%%%%%%%%%%%%%%%%%%%%%%%%%%%
\paragraph{Last part.}
%%%NUMBER DROP STEPS
Consider starting RAFW (Algorithm \ref{algo:Randomized_away_general}) for $T$ iterations at $\xx_0\in\text{conv}(\mathcal{V})$, with $s = |\mathcal{S}_0|\geq 0$. We will now prove there are at most $\left\lfloor\frac{T+s}{2}\right\rfloor$ drop steps. Let $D_T$ be the number of drop steps after iteration $T$ and $F_T$ the number of FW step adding a new atom until iteration $T$. By definition, a FW step is not a drop step so that $D_T+F_T\leq T$. Also $|S_T|=|S_0|+|F_T|-|D_T|$, hence $|\mathcal{S}_T|\leq |\mathcal{S}_0|-2|D_T|+T$ so that $|D_T|\leq \frac{T+s-|\mathcal{S}_T|}{2}$. Finally because $|\mathcal{S}_T|\geq 0$, we have  $|D_T|\leq \left\lfloor\frac{T+s}{2}\right\rfloor$.

%%%RECURRENCE PROPERTY TO PROVE
From the first two parts of the main proof, we have that
\begin{eqnarray}
h_{T}\leq h_0 \prod_{t=0}^{T-1}{\big(1-\rho_f\big(\frac{g_t}{\widetilde g_t}\big)^2\big)^{z_t}},
\end{eqnarray}
where $(g_t,z_t)_{t\in[0:T-1]}$ are defined along RAFW starting at $\xx_0$. For $i<j$, we write  $\mathbb{E}_{i:j}$ the expectation with respect to all sub-sampling between the $i^{th}$ iteration and the $j^{th}$ iteration included. When taking expectation only over sub-sampling $i$, we write it $\mathbb{E}_i$.

We will now prove by recurrence on $T\in\mathbb{N}^*$ that
\begin{eqnarray}\label{eq:recurrence_property}
\mathbb{E}_{0:T-1}(\prod_{t=0}^{T-1}{\big(1-\rho_f\big(\frac{g_t}{\widetilde g_t}\big)^2\big)^{z_t}})\leq (1-\rho_f \eta^2)^{\max\{0,T-\left\lfloor\frac{T+s}{2}\right\rfloor\}}=F(T,s)~~~~\forall s\in\mathbb{N}~~\forall \xx_0\in\mathbb{R}^d~~\text{with }~|\mathcal{S}_0|=s~,
\end{eqnarray}
where $\xx_0 = \sum_{\vv\in\mathcal{A}}{\alpha_{\vv}^{(0)}\vv}$ and $\mathcal{S}_0=\{\vv \in \mathcal{A} \text{ s.t. } \alpha_\vv^{(0)} > 0\}$.

The rate quantity $\max\{0,T-\left\lfloor\frac{T+s}{2}\right\rfloor\}$ represents the number of steps (between iteration $0$ and $T-1$) in which $z_t=1$, e.g. the steps in which there is a possibility of having geometrical decrease. Note that the geometrical decrease happens only when $g_t =\widetilde g_t$.

The key insight in the global bound is to recall (from section~\ref{s:rafw}) that if the support is a singleton, i.e. $|\mathcal{S}_t| =1$, RAFW does a FW step hence $z_t = 1$. We consequently distinguish whether or not the first iterate has an initial support of size $1$. We then use the recurrence property starting the algorithm at $\xx_1$ and running $T-1$ iterations.

%%%INITIALIZATION
\noindent\textbf{Initialization.} We will now prove the recurrence property \eqref{eq:recurrence_property} for $T=1$. If $s\geq 2$, $\max\{0,T-\left\lfloor\frac{T+s}{2}\right\rfloor\}=0$ and \eqref{eq:recurrence_property} is true because $(1-\rho_f\big(\frac{g_0}{\widetilde g_0}\big)^2\big)\leq 1$. If $s=1$, this implies that the first step needs to be a Frank-Wolfe step. We necessarily have $z_0=1$ and so
\begin{eqnarray}
\mathbb{E}_{0}(\big(1-\rho_f\big(\frac{g_0}{\widetilde g_0}\big)^2\big)^{z_0})&=&\mathbb{E}_{0}(\big(1-\rho_f\big(\frac{g_0}{\widetilde g_0}\big)^2\big)~\mid~z_0=1)\\
&\leq& 1-\rho_f\mathcal{P}(g_0=\widetilde g_0~\mid~z_0=1)\\
&\leq& 1-\rho_f \eta^2 \leq 1 \leq F(1,1)~,
\end{eqnarray}
with $\eta=\frac{p}{|\mathcal{A}|}$ where $F$ is defined in \eqref{eq:recurrence_property} and where the last inequality follows from \eqref{eq:proba} in Lemma \ref{lemma:proba_conditional_non_drop_step}.

%%%HEREDITY
\noindent\textbf{Recurrence.} Consider the property \eqref{eq:recurrence_property} when running $T-1$  iteration. By the tower property of conditional expectations

\begin{eqnarray}\label{eq:tower_property}
\mathbb{E}_{0:T-1}(\prod_{t=0}^{T-1}{\big(1-\rho_f\big(\frac{g_t}{\widetilde g_t}\big)^2\big)^{z_t}}) &=& \mathbb{E}_{0:T-1}\big[ 
\big(1-\rho_f\big(\frac{g_0}{\widetilde g_0}\big)^2\big)^{z_0}
\mathbb{E}_{1:T-1}(\prod_{t=1}^{T-1}{\big(1-\rho_f\big(\frac{g_t}{\widetilde g_t}\big)^2\big)^{z_t}}) \big].
\end{eqnarray}

We can apply the recurrence property with $T-1$ iterations and starting point $\xx_1$ on $\mathbb{E}_{1:T-1}(\prod_{t=1}^{T-1}{\big(1-\rho_f\big(\frac{g_t}{\widetilde g_t}\big)^2\big)^{z_t}})$ so that
\begin{eqnarray}
\mathbb{E}_{0:T-1}(\prod_{t=0}^{T-1}{\big(1-\rho_f\big(\frac{g_t}{\widetilde g_t}\big)^2\big)^{z_t}}) &\leq & \mathbb{E}_{0}\big[  \big(1-\rho_f\big(\frac{g_0}{\widetilde g_0}\big)^2\big)^{z_0}F(T-1,|\mathcal{S}_1|) 
\big]~,
\end{eqnarray}
where $|\mathcal{S}_1|$, the support of $\xx_1$, depends on $z_0$. Indeed $z_0=0$ implies a drop step and as such it decreases the support of the iterate. Thus we have to distinguish the case according to the size of the support of $\xx_0$.

% FIRST CASE DIVISION IN HEREDITY
\textbf{Case $|\mathcal{S}_0| = 1$.} With $\xx_0=0$, RAFW starts with a FW step and as such $z_0=1$ as well as $2\geq |\mathcal{S}_1|\geq 1$ so that
\begin{eqnarray}
\mathbb{E}_{0:T-1}(\prod_{t=0}^{T-1}{\big(1-\rho_f\big(\frac{g_t}{\widetilde g_t}\big)^2\big)^{z_t}}) &=& \mathbb{E}_{0}\big[  \big(1-\rho_f\big(\frac{g_0}{\widetilde g_0}\big)^2\big)
~|~z_0=1\big]F(T-1,|\mathcal{S}_1|)\\
&\leq& (1-\rho_f \eta^2)F(T-1,2)\leq F(T,1)~,
\end{eqnarray}
by applying \eqref{eq:proba} in Lemma \ref{lemma:proba_conditional_non_drop_step}. The last equality concludes the heredity in that case. 

% SECOND CASE DIVISION IN HEREDITY
\textbf{Case $|\mathcal{S}_0| \geq 2$.} Here it is possible for $z_0$ to equal $0$ or $1$. If $z_0=1$, then $|\mathcal{S}_1|\leq |\mathcal{S}_0|+1$, while if $z_0=0$, it implies a drop step, we have $|\mathcal{S}_1| = |\mathcal{S}_0|-1$. If we decompose the expectation according to the value of $z_0$ we obtain
\begin{eqnarray}
\mathbb{E}_{0:T-1}(\prod_{t=0}^{T-1}{\big(1-\rho_f\big(\frac{g_t}{\widetilde g_t}\big)^2\big)^{z_t}}) &\leq& 
\mathcal{P}(z_0=1)\mathbb{E}_{0}\big[  \big(1-\rho_f\big(\frac{g_0}{\widetilde g_0}\big)^2\big)~|~z_0=1\big]F(T-1,|\mathcal{S}_1|)\\
&&+\mathcal{P}(z_0=0) F(T-1,|\mathcal{S}_0|-1)\\
&\leq& 
\mathcal{P}(z_0=1)\big(1-\rho_f\eta^2\big)F(T-1,|\mathcal{S}_0|+1)+\mathcal{P}(z_0=0) F(T-1,|\mathcal{S}_0|-1)\\
&\leq& 
\mathcal{P}(z_0=1) \big(1-\rho_f\eta^2\big)F(T-1,s+1)+\mathcal{P}(z_0=0) F(T-1,s-1)~.
\end{eqnarray}
We used the fact that $F(T, |\mathcal{S}_1|) \leq F(T-1, |\mathcal{S}_0|+1)$. Since we do not have access to the values of $\mathcal{P}(z_0 = 0)$ and $\mathcal{P}(z_0 = 1)$, we bound it in the following manner
%\begin{eqnarray}
%\mathbb{E}_{0:T-1}(\prod_{t=0}^{T-1}{\big(1-\rho_f\big(\frac{g_t}{\widetilde g_t}\big)^2\big)^{z_t}}) &\leq& \min\big((1-\rho_f\eta^2)F(T-1,s+1), F(T-1,s-1)\big) = F(T,s)~,
%\end{eqnarray}
\begin{eqnarray}
\mathbb{E}_{0:T-1}(\prod_{t=0}^{T-1}{\big(1-\rho_f\big(\frac{g_t}{\widetilde g_t}\big)^2\big)^{z_t}}) &\leq& \max\big((1-\rho_f\eta^2)F(T-1,s+1), F(T-1,s-1)\big) \leq F(T,s)~,
\end{eqnarray}
where the last inequality is just about writing the definition of $F$. It concludes the heredity result.

\textbf{Conclusion:} Starting RAFW at $\xx_0$, after $T$ iterations, we have
\begin{eqnarray}
h_{T}\leq h_0 \prod_{t=0}^{T-1}{\big(1-\rho_f\big(\frac{g_t}{\widetilde g_t}\big)^2\big)^{z_t}}~.
\end{eqnarray}
Applying \eqref{eq:recurrence_property} we get
\begin{eqnarray}
\mathbb{E}_{0:T-1}(h_{T})&\leq& h_0 (1-\rho_f \eta^2)^{\max\{0,T-\left\lfloor\frac{T+s}{2}\right\rfloor\}}\nonumber\\
&\leq& h_0 (1-\rho_f\eta^2)^{\max\{0,\left\lfloor \frac{T-s}{2} \right\rfloor\}}~.
\end{eqnarray}

\end{proof}

%%%%%%%%%%%%%%%%%%%%%%%%%%%%%%%%%%%%%%%% GENERALIZED STRONGLY CONVEX%%%%%%%%%%%%%%%%%%%%%%%%%%
\paragraph{Generalized strongly convex.}

%\begin{thmbis}{th:Away_CV_Generalized_Strongly_Convex} Let $f$ be $L$-smooth and $\tilde{\mu}$-generally-strongly convex. Then, after $t$ iterations of Algorithm~\ref{algo:Randomized_away_general}, with a $p$ parameter of sub-sampling, we have
%\begin{eqnarray}
%\mathbb{E}\big[ h(\xx_{t+1})\big]\leq \big(1- \eta^2 \tilde{\rho}_f\big)^{\floor{\frac{t-1}{2}}} h(\xx_{0}),
%\end{eqnarray}
%with\footnote{The rate for away Frank-Wolfe in \citep[Theorem 8]{lacoste2015global}} $\tilde{\rho}_f=\frac{ \tilde{\mu}}{4 C_f^A}$ and $\eta=\frac{p}{|\mathcal{A}|}$.
%\end{thmbis}

\begin{thmbis}{th:Away_CV_Generalized_Strongly_Convex}  Suppose $f$ has bounded smoothness constant $C_f^A$ and is  $\tilde{\mu}$-generally-strongly convex.  Consider the set $\mathcal{M}=\text{conv}(\mathcal{A})$, with $\mathcal{A}$ a finite set of extreme atoms. Then after $T$ iterations of Algorithm \ref{algo:Randomized_away_general}, with $s=|\mathcal{S}_0|$ and a $p$ parameter of sub-sampling, we have
\begin{eqnarray}
\mathbb{E}\big[ h(\xx_{T+1})\big]\leq \big(1- \eta^2 \tilde{\rho}_f\big)^{\max\{0,\floor{\frac{T-s}{2}}\}} h(\xx_{0})~,
\end{eqnarray}
with $\tilde{\rho}_f=\frac{ \tilde{\mu}}{4 C_f^A}$ and $\eta=\frac{p}{|\mathcal{A}|}$.
\end{thmbis}

%\begin{thmbis}{th:Away_CV_Generalized_Strongly_Convex}  Suppose $f$ has bounded smoothness constant $C_f^A$ as well as $\tilde{\mu}$-generally-strongly convex.  Consider the set $\mathcal{M}=\text{conv}(\mathcal{A})$, with $\mathcal{A}$ a finite set of extreme atoms. Let's start RAFW (Algorithm \ref{algo:Randomized_away_general}) at $\xx_0\in\mathbb{R}^d$, with $\text{supp}(\xx_0)=s$. After $T$ iterations, with a $p$ parameter of sub-sampling, we have
%\begin{eqnarray}\label{eq:RAFW_convergence_rate_generally_strongly_convex}
%\mathbb{E}\big[ h(\xx_{T+1})\big]-h(\xx^*)\leq \big(1- \eta^2 \tilde{\rho}_f\big)^{\max\{0,\floor{\frac{T-s}{2}}\}} h(\xx_{0})~,
%\end{eqnarray}
%with $\tilde{\rho}_f=\frac{ \tilde{\mu}}{4 C_f^A}$ and $\eta=\frac{p}{|\mathcal{A}|}$. $\tilde{\mu}$ is meticulously defined in \citep[equation (39)]{lacoste2015global}.
%\end{thmbis}

\begin{proof}
The conclusion of proof of \citep[Th. 11]{lacoste2015global} is that we have similarly as equation \eqref{eq:upper_bound_h} by: 
\begin{eqnarray}
f(\xx_{t})-f(\xx^*)=h_t\leq \frac{g_t^2}{2\tilde{\mu}_f}~,
\end{eqnarray}
where $\tilde{\mu}_f>0$ is a similar measure of the affine invariant strong convexity constant but for generalized strongly convex function.

We can thus write the twin of equation \eqref{eq:fruit_second_part}
%\begin{eqnarray}\label{eq:dual_improvement_without_expectation_strongly_convex}
%h_{t+1}\leq h_0 \prod_{k=0}^{t}\big( 1-\tilde{\rho}_f   \big(\frac{g_t}{\widetilde{g}_t}\big)^2  \big)^{z_t},
%\end{eqnarray}
\begin{eqnarray}\label{eq:dual_improvement_without_expectation_strongly_convex}
h_{t+1}\leq h_t \big( 1-\tilde{\rho}_f   \big(\frac{g_t}{\widetilde{g}_t}\big)^2  \big)^{z_t}~,
\end{eqnarray}
with $\tilde{\rho}_f=\frac{\tilde{\mu}_f}{4 C_f^A}$. The rest of the proof follows is the same as that of Theorem~\ref{th:RAFW_expectation_result}.
\end{proof}

\clearpage

\section{Technical issues of previous work}\label{s:appendix_incomplete_frandi}

In this section we highlight some technical issues present in previous work.

\subsection{Randomized Frank-Wolfe in \citet{Frandi2016}}

\citet{Frandi2016} present a Randomized FW algorithm for the case of the $\ell_1$ ball in $\mathbb{R}^d$. Denote by $\mathcal{A}=\{\pm \boldsymbol{e}_i~~\forall i\in [d]\}$, where $\boldsymbol{e}_i$ is the canonical basis (i.e., the vector that is zero everywhere except on the $i$-th coordinate, where it equals one) the extremes atoms of the $\ell_1$ ball. Up to the iterative explicit implementation of the \textit{residuals}, \citep[Algorithm 2]{Frandi2016} with the sampling size $p\in[n]$ and our RFW (Algorithm \ref{algo:Randomized_FW_general}) are equivalent for the following choice of $\mathcal{A}_t$ in RFW
\begin{equation}
\mathcal{A}_t=\{\pm e_i~~\forall i\in \mathcal{I}_p\}~,~\text{ where $\mathcal{I}_p$ is random subset of $[d]$ of size $p$.}
\end{equation}

\paragraph{Convergence result.}
In this case, \citep[Proposition 2]{Frandi2016} gives the following convergence bound in expectation after $t$ iterations: 
\begin{eqnarray}\label{eq:expectation_rate_False}
\mathbb{E}(f(\xx_{t}))-f(\xx^*)\leq \frac{4 C_f}{t+2}~.
\end{eqnarray}
First, it is rather surprising that, unlike in our Theorem~\ref{th:expectation_rate}, the sub-sampling size $p$ does not appear in the convergence bound. A closer inspection at their Lemma $2$ reveals some errors in their proof. For the remainder of this section we will use the notation in \citep{Frandi2016}.

% The aim of \citep[Lemma 2]{Frandi2016} is to prove \citep[equation (23)]{Frandi2016}
% \begin{eqnarray*}
% h_{k+1}=\mathbb{E}_{\mathcal{S}^{(k)}}\big[ h(\alpha_{\lambda}^{(k+1)})\big]\leq h_k-\lambda g_k +\lambda^2 C_f~.
% \end{eqnarray*}
% This is akin to the descent lemma in proof of first order algorithms. It is the same building block that is founded in most of the proofs of FW related algorithms. Similarly we have \eqref{eq:diff_with_frandi} with a factor $\eta=\frac{p}{d}$, that represents the cost of sub-sampling.

\paragraph{The point of interest.} The proof of their Proposition 2 starts with the following inequality derived from the curvature constant:
\begin{equation}
    f(\alpha_{\lambda}^{(k+1)})\leq f(\alpha^{(k)}) +\lambda \big(u^{(k)}-\alpha^{(k)}\big)^T\nabla f(\alpha^{(k)})+\lambda^2 C_f~.
\end{equation}
Then it is claimed that the following equation, Eq. (24) in their paper, is a direct consequence ``after some algebraic manipulations''
\begin{equation}\label{eq:frandi_problematic_equation}
\mathbb{E}_{\mathcal{S}^{(k)}}\big[f(\alpha_{\lambda}^{(k+1)})\big]\leq f(\alpha^{(k)}) +\lambda \mathbb{E}_{\mathcal{S}^{(k)}}\big[\big(u^{(k)}-\alpha^{(k)}\big)^T\widetilde\nabla_{\mathcal{S}^{(k)}} f(\alpha^{(k)})\big]+\lambda^2 C_f~.
\end{equation}
which is not clear unless $u^{(k)}$ is independent of the sampling set, something that is not verified given that it is chosen \emph{precisely  from} the sampling set.

\paragraph{Technical details.}
$\lambda$ being positive, for Eq.~\eqref{eq:frandi_problematic_equation} to be true, we should necessarily have the following
\begin{eqnarray}\label{eq:false_equation}
\mathbb{E}_{\mathcal{S}^{(k)}}\big[\big(u^{(k)}-\alpha^{(k)}\big)^T\nabla f(\alpha^{(k)})\big] \leq  \mathbb{E}_{\mathcal{S}^{(k)}}\big[\big(u^{(k)}-\alpha^{(k)}\big)^T\widetilde\nabla_{\mathcal{S}^{(k)}} f(\alpha^{(k)})\big]~.
\end{eqnarray}
$\alpha^{(k)}$ as well as $\nabla f(\alpha^{(k)})$ are deterministic with respect to the $\mathcal{S}^{(k)}$ sampling set so the previous equation is equivalent to
\begin{equation}
\mathbb{E}_{\mathcal{S}^{(k)}}\big[\big(u^{(k)}\big)^T\nabla f(\alpha^{(k)})\big]-\big(\alpha^{(k)}\big)^T\nabla f(\alpha^{(k)}) \leq  \mathbb{E}_{\mathcal{S}^{(k)}}\big[\big(u^{(k)}\big)^T\widetilde\nabla_{\mathcal{S}^{(k)}} f(\alpha^{(k)})\big] -\big(\alpha^{(k)}\big)^T\mathbb{E}_{\mathcal{S}^{(k)}}\big[\widetilde\nabla_{\mathcal{S}^{(k)}} f(\alpha^{(k)})\big]
\end{equation}
Since the sub-sampling of $\mathcal{S}^{(k)}$ is uniform and by definition of $\widetilde\nabla_{\mathcal{S}^{(k)}} f(\alpha^{(k)})$ in \citep[equation (14)]{Frandi2016} we have $\mathbb{E}_{\mathcal{S}^{(k)}}\big[\widetilde\nabla_{\mathcal{S}^{(k)}} f(\alpha^{(k)})\big]=\nabla f(\alpha^{(k)})$. Then \eqref{eq:false_equation} is equivalent to
\begin{eqnarray}
\mathbb{E}_{\mathcal{S}^{(k)}}\big[\big(u^{(k)}\big)^T\nabla f(\alpha^{(k)})\big] &\leq&  \mathbb{E}_{\mathcal{S}^{(k)}}\big[\big(u^{(k)}\big)^T\widetilde\nabla_{\mathcal{S}^{(k)}} f(\alpha^{(k)})\big]~.
\end{eqnarray}
Also by definition in \citep[equation (22)]{Frandi2016}, $u^{(k)}$ the FW atom has its support on $\mathcal{S}^{(k)}$ as well as from \citep[equation (6)]{Frandi2016} we have that $\big(u^{(k)}\big)^T\nabla f(\alpha^{(k)})<0$ . So that $\big(u^{(k)}\big)^T\nabla f(\alpha^{(k)})=\big(u^{(k)}\big)^T\nabla_{\mathcal{S}^{(k)}} f(\alpha^{(k)})$ and finally \eqref{eq:false_equation} is equivalent to
\begin{eqnarray}
\frac{|\mathcal{S}^{(k)}|}{p}\mathbb{E}_{\mathcal{S}^{(k)}}\big[\big(u^{(k)}\big)^T\widetilde\nabla_{\mathcal{S}^{(k)}} f(\alpha^{(k)})\big] &\leq&  \mathbb{E}_{\mathcal{S}^{(k)}}\big[\big(u^{(k)}\big)^T\widetilde\nabla_{\mathcal{S}^{(k)}} f(\alpha^{(k)})\big]~,
\end{eqnarray}
this last inequality being false in general because $\frac{|\mathcal{S}^{(k)}|}{p}<1$ and $\mathbb{E}_{\mathcal{S}^{(k)}}\big[\big(u^{(k)}\big)^T\widetilde\nabla_{\mathcal{S}^{(k)}} f(\alpha^{(k)})\big]\leq 0$.

%Denoting by $r_k=\nabla f(\alpha^{(k)})$ and $\tilde{r}_k=\tilde{\nabla}_{S^{(k)}} f(\alpha^{(k)})$, in their notation, it requires the following to be true:
%\begin{eqnarray*}
%E_k \left((u^{(k)}-\alpha^{(k)})^T r_k\right)&\leq & E_k \left((u^{(k)}-\alpha^{(k)})^T \tilde{r}_k\right)\\
%\Leftrightarrow E_k \left((u^{(k)}-\alpha^{(k)})^T r_k\right)&\leq & E_k \left((u^{(k)})^T \tilde{r}_k \right) - (\alpha^{(k)})^T r_k\\
%\Leftrightarrow  E_k \left((u^{(k)})^T r_t\right)&\leq & E_k \left((u^{(k)})^T \tilde{r}_t\right)
%\end{eqnarray*}
%Since $u^{(k)}$ is an atom of the $\ell_1$ ball, we have $(u^{(k)})^T r_t= \frac{S^{(k)}}{p}(u^{(k)})^T \tilde{r}_t$ and finally we should have the following inequality:
%\begin{eqnarray}\label{eq:last_proof_Randi_False}
%\frac{S^{(k)}}{p} E_k \left( (u^{(k)})^T \tilde{r}_t \right) \leq  E_k \left((u^{(k)})^T \tilde{r}_t\right).
%\end{eqnarray}
%The definition of $u^{(k)}$ satisfies:
%\begin{eqnarray}
%u^{(k)}=\underset{u\in \mathcal{V}}{\argmin } ~~ u^T \tilde{r}_t
%\end{eqnarray}
%where $\mathcal{V}$ is the set of extreme points of the $\ell_1$ ball. Its symmetry ensures that  $(u^{(k)})^T \tilde{r}_t$ is negative. Equation (\ref{eq:last_proof_Randi_False}) is then false.

\clearpage

\end{document}